\title{Hasse-Schmidt derivations and locally trivial deformations in positive characteristic}
\author{Takuya Miyamoto\thanks{\textsc{Mathematical Sciences, the University of Tokyo, Meguro Komaba 3-8-1, Tokyo, Japan}\\
    \textit{E-mail address}: \texttt{miyamoto-takuya@g.ecc.u-tokyo.ac.jp}}}
\begin{document}
\maketitle

\begin{abstract}
    In this paper, we first study the lifting problem of Hasse–Schmidt derivations and then apply the results to the theory of locally trivial deformations of algebraic schemes in positive characteristic. As an application, we construct an algebraic curve whose locally trivial deformation functor does not satisfy Schlessinger's condition $(H_1)$.
\end{abstract}

\tableofcontents

\newtheorem{theorem}{Theorem}[section]
\newtheorem{proposition}[theorem]{Proposition}
\newtheorem{corollary}[theorem]{Corollary}
\newtheorem{lemma}[theorem]{Lemma}
\theoremstyle{definition}
\newtheorem{definition}[theorem]{Definition}

\newtheorem{assumption}[theorem]{Assumption}
\newtheorem{deflemma}[section]{Definition-Lemma}
\theoremstyle{remark}
\newtheorem{remark}{Remark}[theorem]
\newtheorem{example}{Example}[theorem]
\theoremstyle{definition}

\section{Introduction}
Throughout this paper, we work over an algebraically closed field $k$ of positive characteristic $p$. We assume that any $k$-algebra is unital and commutative.

In this paper, we study the following two lifting problems in positive characteristic: Suppose $$0\to k\cdot t\to B\to A\to 0$$ is a small extension of artinian local $k$-algebras of finite type. First, we study when a deformation automorphism of the trivial deformation $R\otimes_k A$ lifts to $R\otimes_k B$, where $R$ is a $k$-algebra of finite type. We apply the theory of Hasse-Schmidt derivations introduced in \cite{Has} to this problem. Second, let $X_0$ be an algebraic scheme and $X$ be a locally trivial deformation of $X_0$ over $A$ (a formal deformation that is Zariski-locally isomorphic to the trivial deformation). We study the obstruction structure of $X$ along $B\to A$. In other words, we use cohomological classes to give a characterization when $X$ lifts to a locally trivial deformation over $B$. (We note that if $X_0$ is smooth, then any formal deformation is locally trivial. Thus, the notion of locally trivial deformations coincides with usual deformations.)

Let us note that the theory of locally trivial deformation in positive characteristic is not well-established compared with characteristic $0$ case. For example, in characteristic $0$, the locally trivial deformation functor $\operatorname{Def}_{X_0}'$ satisfies Schlessinger's conditions (see \cite[Theorem 2.11]{Sch}, \cite[Remark 2.7]{BGL}), which is a fundamental and important fact. Then, as pointed out in \cite[p. 630]{flenner1987locally}, the natural question arises: What if $X$ is an algebraic scheme over a field of positive characteristic $p$? The proof for the characteristic $0$ case does not apply because it involves exponentials and in particular the fraction $\frac{1}{p!}$. In \cite[Theorem 2.4.1]{Ser}, it is asserted that locally trivial deformation functors satisfy $(H_1)$ even in positive characteristic, but there seems to be a subtle gap in the proof. We answer this question in the negative by the following theorem:
\begin{theorem}[= Theorem \ref{counterexample theorem}]\label{Main thm 2}
    For any algebraically closed base field of any positive characteristic, there exists a singular algebraic curve $C_0$ whose locally trivial deformation functor $\operatorname{Def}'_{C_0}$ does not satisfy Schlessinger's condition $(H_1)$.
\end{theorem}
This is our main result in this paper. As far as we have searched, such an example seems to be constructed for the first time ever. We cannot expect such an example to be smooth because if it were, its locally trivial deformation functor coincide with the usual deformation functor.

Another aim of this paper is to show that, even in positive characteristic, their behavior is tame to some extent and that there is room for further research in this area. For example, another important consequence is the following:

\begin{theorem}[= Corollary \ref{affine cor}]\label{Main thm}
    Every locally trivial deformation of an affine algebraic scheme $X_0$ over $k$ is trivial.
\end{theorem}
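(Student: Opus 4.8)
The plan is to argue by induction on the length $\ell := \operatorname{length}_k A$ of the (artinian local) base ring of the given deformation $X$ of $X_0$. If $\ell = 1$ then $A = k$ and $X = X_0$ is the trivial deformation, so there is nothing to prove. If $\ell > 1$, pick a line $k\cdot t$ in the socle of $A$; it is a square-zero ideal annihilated by $\mathfrak{m}_A$, so that $0 \to k\cdot t \to A \to A' \to 0$ is a small extension with $\operatorname{length}_k A' = \ell - 1$. The restriction $X' := X \times_A A'$ is again locally trivial over $A'$ (via the same open cover), hence trivial by the inductive hypothesis; fix an isomorphism of deformations $\theta\colon X' \xrightarrow{\sim} X_0 \times_k \operatorname{Spec} A'$. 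The whole problem is thereby reduced to extending $\theta$ to an isomorphism $X \xrightarrow{\sim} X_0 \times_k \operatorname{Spec} A$ of deformations over $A$.

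Since $X_0 = \operatorname{Spec} R$ is affine it is separated, so I would choose a finite affine open cover $\{U_i\}$ of the underlying space of $X$ over which $X$ is trivial, say by $\phi_i\colon X|_{U_i} \xrightarrow{\sim} U_i \times_k \operatorname{Spec} A$, with all intersections $U_{ij}$ again affine. On $U_i$, over $A'$, the trivialization $\phi_i|_{A'}$ of $X'|_{U_i}$ differs from $\theta|_{U_i}$ by a deformation automorphism $\beta_i := (\phi_i|_{A'}) \circ (\theta|_{U_i})^{-1}$ of the trivial deformation $\mathcal{O}(U_i) \otimes_k A'$. Assume for now that each $\beta_i$ admits a lift to a deformation automorphism $\widetilde{\beta}_i$ of $\mathcal{O}(U_i) \otimes_k A$ along $A \to A'$; replacing $\phi_i$ by $\widetilde{\beta}_i^{-1} \circ \phi_i$, we may then assume that every $\phi_i$ restricts over $A'$ to $\theta|_{U_i}$. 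Now the transition automorphisms $\psi_{ij} := \phi_i \circ \phi_j^{-1}$ of $U_{ij} \times_k \operatorname{Spec} A$ satisfy the cocycle relation and, since $\theta$ is globally defined, restrict to the identity over $A'$; hence each is given on functions by $x \mapsto x + D_{ij}(x)\,t$ for a unique $k$-derivation $D_{ij}$ of $\mathcal{O}(U_{ij})$ — the standard description, valid in any characteristic, of deformation automorphisms trivial modulo the square-zero ideal $k\cdot t$. Because $t^2 = 0$, the cocycle relation becomes the additive one $D_{ij} + D_{jk} = D_{ik}$, so $\{D_{ij}\}$ defines a class in $\check{H}^1(\{U_i\}, \mathcal{D}er_k(\mathcal{O}_{X_0}))$. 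As $\mathcal{D}er_k(\mathcal{O}_{X_0}) = \mathcal{H}om_{\mathcal{O}_{X_0}}(\Omega_{X_0/k}, \mathcal{O}_{X_0})$ is coherent and $X_0$ is affine, this group vanishes, so $D_{ij} = D_i|_{U_{ij}} - D_j|_{U_{ij}}$ for some $D_i \in \operatorname{Der}_k(\mathcal{O}(U_i))$. Replacing $\phi_i$ once more by $\eta_i^{-1} \circ \phi_i$, where $\eta_i$ is the deformation automorphism $x \mapsto x + D_i(x)\,t$, all transition automorphisms become the identity, so the $\phi_i$ glue to an isomorphism of deformations $X \xrightarrow{\sim} X_0 \times_k \operatorname{Spec} A$ extending $\theta$, which closes the induction.

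What remains — and this is the one genuinely positive-characteristic point, hence the crux — is the assumed liftability of the deformation automorphisms $\beta_i$ along $A \to A'$. In characteristic $0$ one writes $\beta_i = \exp(\delta_i)$ for a derivation $\delta_i$ valued in the nilpotent ideal $\mathfrak{m}_{A'}$ and lifts by applying the same exponential to any lift of $\delta_i$; in characteristic $p$ this route is closed, and one must pass through Hasse-Schmidt derivations instead. The intended input is precisely the lifting theorem for Hasse-Schmidt derivations established earlier in the paper, applied to each affine $U_i$: it is exactly what allows $\beta_i$, encoded by its associated Hasse-Schmidt datum, to be extended from $\mathcal{O}(U_i) \otimes_k A'$ to $\mathcal{O}(U_i) \otimes_k A$. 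I expect this lifting step to be the main obstacle; the rest is routine \v{C}ech descent. I would also note that, since we are only trivializing a deformation that is already given (rather than constructing a lift of one), no obstruction space such as $H^2$ intervenes: the purely cohomological step is the vanishing of $H^1(X_0, \mathcal{D}er_k(\mathcal{O}_{X_0}))$ on the affine scheme $X_0$, the genuinely nontrivial input being the automorphism-lifting supplied by the Hasse-Schmidt theory.
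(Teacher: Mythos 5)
Your inductive framework matches the paper's, and the last two paragraphs (making the $\phi_i$ agree with $\theta$ over $A'$, writing the transition automorphisms as derivations $D_{ij}$ valued in $k\cdot t$, and killing them by the vanishing of $\check H^1(\mathfrak U,\operatorname{Der}_k(\mathcal O_{X_0}))$ on an affine) are correct and standard. The genuine gap is the step you flag yourself: the assertion that each $\beta_i\in\operatorname{HS}^{A'}(R_i)$ lifts along $A\to A'$. There is no such unconditional ``lifting theorem for Hasse--Schmidt derivations'' in the paper or elsewhere; lifting along a small extension $\Phi:0\to k\cdot t\to A\to A'\to 0$ is obstructed, with obstruction $\operatorname{ob}_\Phi(\beta_i)\in\operatorname{Ob}^\Phi_{R_i}$, and for a non-smooth $R_i$ this group is typically nonzero (indeed, the whole point of $\operatorname{Ob}^\Phi$ is that such obstructions occur). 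The lemmas in the paper only give lifting of $\operatorname{HS}^A(R)$ into $\operatorname{HS}^A(T)$ for a formally smooth presentation $T\twoheadrightarrow R$, or lifting along $A\to A'$ when $R$ itself is formally smooth --- neither applies here.

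The correct way to close this gap, and the place where the hard work of Sections 2 and 3 of the paper enters, is to modify $\theta$ globally rather than hope the $\beta_i$ lift. On an overlap $U_{ij}$ one has $\beta_i\circ\beta_j^{-1}=(\phi_i\circ\phi_j^{-1})|_{A'}$, which manifestly lifts (to the transition automorphism of $X$ over $A$), so $\operatorname{ob}_\Phi(\beta_i)$ and $\operatorname{ob}_\Phi(\beta_j)$ agree on $U_{ij}$. This uses that $\operatorname{ob}_\Phi$ commutes with restriction and that $\operatorname{Ob}^\Phi$ is a sheaf --- which is exactly Theorem \ref{artin obst theorem} and Corollary \ref{Ob sheaf artin}, resting on the decomposition Theorem \ref{decomposition thm}; it is not free. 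The local obstructions therefore glue to a class $s\in H^0(X_0,\operatorname{Ob}^\Phi_{X_0})=\operatorname{Ob}^\Phi_R$, and since $\operatorname{Ob}^\Phi_R$ is by definition the image of $\operatorname{ob}_\Phi$ on $\operatorname{HS}^{A'}(R)$, there is a global $\gamma\in\operatorname{HS}^{A'}(R)$ with $\operatorname{ob}_\Phi(\gamma)=s$. Replacing $\theta$ by $\gamma\circ\theta$ kills every $\operatorname{ob}_\Phi(\beta_i)$, after which your lifting of the $\beta_i$ is legitimate and the rest of the argument goes through. In the paper's language this is precisely the equality $L(X)=\check H^0(\mathfrak U,\operatorname{Ob}^\Phi_{X_0})$ for the trivial couple $X$ on an affine scheme, combined with Proposition \ref{modding out} and $\check H^1(\mathfrak U,\operatorname{Der}_k(\mathcal O_{X_0}))=0$. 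So the route is essentially yours once the gap is filled, but the filling is the substantive positive-characteristic input, and it is precisely the sheaf-theoretic obstruction module $\operatorname{Ob}^\Phi_{X_0}$, not an automorphism-lifting theorem, that does the work.
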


Note that the same result is an immediate consequence of \cite[Theorem 2.11]{Sch} in characteristic $0$ or if $X_0$ is smooth.


In Section 2, we study the Hasse-Schmidt derivations $\operatorname{HS}_k^m(R)$, where $R$ is a $k$-algebra essentially of finite type. This is equivalent to the notion of deformation automorphisms of $R\otimes_k (k[\varepsilon]/\varepsilon^{m+1})$. See \cite{Has}, \cite[Chapter 1]{Mac} and \cite[Section 27]{Mat} for basic facts of Hasse-Schmidt derivations. The problem of extending a Hasse-Schmidt derivation corresponds to extending a deformation automorphism along $k[\varepsilon]/\varepsilon^{m+1}\to k[\varepsilon]/\varepsilon^{m}$. To describe this problem, we introduce the $m$-th \emph{obstruction module} $\operatorname{Ob}^m_R$ whose definition is very natural (Definition \ref{obstruction sp M,Ob}, \ref{Ob}). Its definition relies on the first cotangent module $T^1_{R/k}$ (see \cite[Definition 1.1.6]{Ser} for example). The problem is that, by definition, it is merely an abelian group; it is not evident whether it is an $R$-module or even constitutes a sheaf over $\operatorname{Spec}R$. Only for $m=p^i$, we know that it has a natural module structure. To deal with this problem, we closely follow the methods of Hernández \cite[Lemma 2.10]{Her} to show that, for general $m$, we have $\operatorname{Ob}^m_R=\operatorname{Ob}^{p^i}_R$ where $i$ is the largest integer with $p^i|m$. In addition, we utilize the methods of Macarro \cite[Proposition 1.3.5]{Mac} to show that $\operatorname{Ob}^{p^i}_R$ is compatible with localization. Thus, we conclude that $\operatorname{Ob}^{p^i}_R$ constitutes a coherent sheaf (Proposition \ref{Ob module}, Theorem \ref{Ob sheaf}). Moreover, this demonstrates that Hasse-Schmidt derivations are useful not only in commutative algebra, but also in algebraic geometry. We also examine the relationship with m-integrable derivations \cite[Definition 2.1.1]{Mac}, which have been shown by Macarro to form coherent sheaves \cite[Corollary 2.3.7]{Mac}. In this section, we essentially use the fact that $k$ contains $n$-th roots of unity. Note that the definition of the $m$-th obstruction modules $\operatorname{Ob}^m_R$ does not use the characteristic $p$ assumption. However, since any Hasse-Schmidt derivation extends in characteristic $0$, we see that $\operatorname{Ob}^m_R$ is interesting only in positive characteristics.

In Section 3, we generalize the results of Section 2 to arbitrary small extensions. To keep the notation consistent with that of Section 2, we denote the group of deformation automorphisms of $R\otimes_k A$ by $\operatorname{HS}_k^A(R)$. For an arbitrary small extension $\Phi :0\to k\cdot t\to B\to A\to 0$, we can define the obstruction module $\operatorname{Ob}^\Phi_R$ in the same way. Again, the problem is that it is only an abelian group. To overcome this issue, we prove the following key result:
\begin{theorem}[= Theorem \ref{decomposition thm}]
For any $D\in \operatorname{HS}_k^A(R)$, we have a decomposition 
$$D=D^1\langle y_1 \rangle\circ \dots \circ D^m\langle y_m \rangle$$
where $y_1,\dots,y_m\in \mathfrak{m}_{A}$ and $D^i\in \operatorname{HS}_k^{r(A,y_i)}(R)$. (The symbols $D^i\langle y_i \rangle$, $r(A,y_i)$ will be defined in Definitions \ref{r def}, \ref{bracket def}).
 \end{theorem}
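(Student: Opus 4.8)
The plan is to prove, by induction on $\dim_k\mathfrak{a}$, the following apparent strengthening: for every ideal $\mathfrak{a}\subseteq\mathfrak{m}_A$ and every $D\in\operatorname{HS}^A(R)$ with $D(r)-r\in R\otimes_k\mathfrak{a}$ for all $r\in R$, there is a decomposition $D=D^1\langle y_1\rangle\circ\cdots\circ D^m\langle y_m\rangle$ with $y_i\in\mathfrak{a}$ and $D^i\in\operatorname{HS}^{r(A,y_i)}(R)$. Taking $\mathfrak{a}=\mathfrak{m}_A$ recovers the theorem, and $\mathfrak{a}=0$ (which forces $D=\operatorname{id}$, the empty product) starts the induction. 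The one elementary fact I will use repeatedly is that $\mathfrak{m}_A$ annihilates $\mathfrak{a}/\mathfrak{m}_A\mathfrak{a}$, so \emph{every} $k$-subspace $W$ with $\mathfrak{m}_A\mathfrak{a}\subseteq W\subseteq\mathfrak{a}$ is automatically an ideal of $A$; this is what lets me pass to a strictly smaller ideal at each step without worrying about the multiplicative structure.

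For the inductive step, assume $\mathfrak{a}\ne 0$, so $\mathfrak{m}_A\mathfrak{a}\subsetneq\mathfrak{a}$ by Nakayama. Choose a minimal generator $y=y_1\in\mathfrak{a}\setminus\mathfrak{m}_A\mathfrak{a}$ and set $c:=r(A,y)$; the powers $y,y^2,\dots,y^c$ are $k$-linearly independent and, since $y^j=y\cdot y^{j-1}\in\mathfrak{m}_A\mathfrak{a}$ for $j\ge 2$, I may pick a $k$-complement $W$ of $ky$ in $\mathfrak{a}$ with $\mathfrak{m}_A\mathfrak{a}\subseteq W$. Then $\mathfrak{a}/W\cong k\bar{y}$, so $W$ is an ideal with $\dim_k W=\dim_k\mathfrak{a}-1$ and $y\notin W$. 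Writing $D(r)=r+f(r)$ and using $f(r)f(r')\in R\otimes_k\mathfrak{a}^2\subseteq R\otimes_k\mathfrak{m}_A\mathfrak{a}\subseteq R\otimes_k W$, one sees the $\bar{y}$-component $\delta$ of $f$ is a genuine derivation $\delta\in\operatorname{Der}_k(R)$. The crux of the argument is the \emph{extraction step}: one produces $D^1\in\operatorname{HS}^{c}(R)=\operatorname{HS}^{r(A,y_1)}(R)$ whose first component is $\delta$. Granting this, put $D^{(1)}:=D^1\langle y_1\rangle^{-1}\circ D$: since $y^j\in W$ for $j\ge 2$, both $D$ and $D^1\langle y_1\rangle$ induce on $R\otimes_k(A/W)$ the same automorphism $r\mapsto r+\delta(r)\bar{y}$, so $D^{(1)}(r)-r\in R\otimes_k W$. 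Applying the induction hypothesis to $(D^{(1)},W)$ gives $D^{(1)}=D^2\langle y_2\rangle\circ\cdots\circ D^m\langle y_m\rangle$ with $y_i\in W\subseteq\mathfrak{a}$, and composing with $D^1\langle y_1\rangle$ on the left closes the induction.

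Thus the real obstacle is the extraction step, and this is exactly where the results of Section 2 are needed. The natural candidate for $D^1$ is the sequence $(\operatorname{id},\delta_1,\dots,\delta_c)$, where $\delta_j$ is the $y^j$-component of $D$ with respect to a $k$-basis of $\mathfrak{a}$ extending $y,\dots,y^c$; expanding $D(r)D(r')=D(rr')$ and collecting the $y^j$-coefficient shows this candidate satisfies the Hasse--Schmidt identity at level $j$ up to a symmetric correction $E_j(r,r')$ assembled from the components of $D$ along basis directions of $\mathfrak{a}$ other than the powers of $y_1$. To convert the candidate into an honest element of $\operatorname{HS}^c(R)$ one adjusts its $j$-th entry by a map whose Hochschild defect equals $E_j$, and the existence of such an adjustment is measured by $\operatorname{Ob}^j_R$. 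Here Section 2 enters twice: first, $\operatorname{Ob}^j_R$ depends only on the $p$-part of $j$, so for $j$ not a power of $p$ the obstruction class is automatically trivial and the adjustment always exists (in characteristic $0$ this disposes of every $j$, which is why there is nothing to prove there); second, for $j=p^i$ one argues that the class is killed precisely because $E_{p^i}$ is built out of the Leibniz defects already satisfied by the \emph{mixed} components of $D$ — in other words, $D$, being defined over all of $A$ and not merely over the line $k[y_1]$, carries exactly the data needed to annihilate its own obstruction at $p$-power levels even though $\operatorname{Ob}^{p^i}_R\ne 0$ in general. Establishing this last point cleanly, and organizing the level-by-level corrections so that adjusting $\delta_j$ does not reintroduce defects at higher levels, is the technical heart I expect to have to work out in detail.
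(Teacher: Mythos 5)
Your inductive scaffolding (induction on $\dim_k\mathfrak{a}$, peeling off one minimal generator at a time) is a genuinely different route from the paper, which inducts on the length of $A$ and reduces each small extension to the specially constructed rings $A(e,a)$, decomposing them line by line via Lemma \ref{iota lemma}. But your argument has a genuine gap at exactly the point you flag, and the problem is worse than "details to be worked out": the extraction step is \emph{false} in the generality in which your induction invokes it. You choose an \emph{arbitrary} minimal generator $y\in\mathfrak{a}\setminus\mathfrak{m}_A\mathfrak{a}$ and an \emph{arbitrary} complement $W\supseteq\mathfrak{m}_A\mathfrak{a}$, and then need $D^1\in\operatorname{HS}^{r(A,y)}(R)$ with $D^1_1=\delta$, i.e.\ you need the $\bar y$-component $\delta$ of $D$ to be $r(A,y)$-integrable. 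Counterexample: let $A=k[x,z]/(x^{p+1},z^2,xz)$, take $d\in\operatorname{Der}_k(R)\setminus\operatorname{Der}_k^p(R)$ (such $R$ and $d$ exist; this is the whole point of the leaps phenomenon), and let $D=d\langle z\rangle$, i.e.\ $D(r)=r+d(r)\otimes z$, which is supported in $\mathfrak{a}=\mathfrak{m}_A$. Choose $y=x+z$, a legitimate minimal generator with $r(A,y)=p$ since $(x+z)^j=x^j$, and the complement $W=\operatorname{span}(x,x^2,\dots,x^p)\supseteq\mathfrak{m}_A^2$. Then $z\equiv\bar y$ mod $W$, so $\delta=d$, and no element of $\operatorname{HS}^p(R)$ has first component $d$; no amount of level-by-level correction of $\delta_2,\dots,\delta_c$ can repair this, because the induction forces the first entry to be exactly $\delta$. (With the other complement, the one containing $z$, or with the choice $y=z$, the step goes through — which shows the choice of $y$ and $W$ must be adapted to $D$, and proving that an adapted choice always exists, coherently through all stages of the induction, is essentially the entire difficulty. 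That is precisely what the paper's machinery — the rings $A(e,a)$ encoding the relations $z_i^{e_i}=a_it$, the lexicographic line-by-line decomposition of the cube $C(e)$, the operators $\chi_n,\omega_n$, roots of unity and $p^j$-th roots — is built to do.)

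A secondary but related misreading: you assert that for $j$ not a power of $p$ the obstruction at level $j$ is automatically trivial. Proposition \ref{np=p} gives $\operatorname{Ob}^j_R=\operatorname{Ob}^{p^i}_R$ where $p^i$ is the exact power of $p$ dividing $j$; this vanishes automatically only when $p\nmid j$. For $j=p^i n$ with $i\ge1$, $n\ge 2$, the obstruction module is $\operatorname{Ob}^{p^i}_R$, which is nonzero in general, so your "only $p$-power levels need an argument" dichotomy does not hold. Finally, note that your strengthened statement (all $y_i$ inside the supporting ideal $\mathfrak{a}$, with at most $\dim_k\mathfrak{a}$ factors and nested supports) is strictly stronger than Theorem \ref{decomposition thm}, so it cannot be salvaged by quoting the theorem; it would itself require a proof of the adapted-choice claim above.
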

From this theorem, we can reduce the problem to the case of usual Hasse-Schmidt derivations. As a consequence, we deduce $\operatorname{Ob}^\Phi_R=\operatorname{Ob}^{p^i}_R$ where $i$ is an integer depending on $\Phi $. The proof of the above theorem is the most technical part of this paper. Its proof begins with defining artinian rings denoted by $A(e,a)$, where $e=(e_1,\dots,e_n)$ is a sequence of positive integers and $a$ a sequence in $k$. It is very close to the ring $A(e):=k[X_1,\dots,X_n]/(X_1^{e_1},\dots, X_n^{e_n})$. We first prove the theorem in the case $A=A(e,a)$ using geometric interpretation and various results in Section 2. Then, we reduce the general cases to $A(e,a)$ and this completes the proof. In this section, we use the perfectness of $k$ (we take $p^i$-th roots).

In Sections 4 and 5, we apply the previous results to locally trivial deformations of an algebraic scheme $X_0$ along a small extension $\Phi:0\to k\cdot t\to B\to A\to 0$. We identify locally trivial deformations and non-abelian \v{C}ech cohomologies and utilize certain exact sequences (cf. \cite[Definition 11.11]{gw}). The problem of existence of a locally trivial lift is decomposed into two parts, which we can interpret as follows: First, we determine whether lifts of gluing morphisms exist (Proposition \ref{classes interpret prop} (i)). Second, we determine whether we can take the lifts in such a way that they satisfy the cocycle conditions (Proposition \ref{classes interpret prop} (ii)). We use $\operatorname{Ob}_{X_0}^\Phi$ to describe these problems. 
Furthermore, we analyze the fiber of a locally trivial deformation $X\in \operatorname{Def}'_{X_0}(A)$ along $\alpha: \operatorname{Def}'_{X_0}(B)\to \operatorname{Def}'_{X_0}(A)$. Determining the complete structure of such fibers is difficult, even in characteristic $0$ or even if $X_0$ is smooth. However, at least we have the following:
\begin{theorem}[= Theorem \ref{lt main thm}]\label{main thm 3}
    We have an isomorphism of sets
    $$\alpha^{-1}(X)/H^1(\operatorname{Der}_k(\mathcal{O}_{X_0}))\simeq \operatorname{ker}\nu _X^\Phi/L(X).$$
The symbols $\nu _X^\Phi,\ L(X)$ will be defined in terms of $H^0(\operatorname{Ob}^\Phi_{X_0})$ in Definition \ref{L(X) def}, \ref{nu def}.
\end{theorem}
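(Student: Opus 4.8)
The plan is to work with respect to a fixed affine open cover $\mathcal{U}=\{U_i\}$ of $X_0$, chosen fine enough that $X$ and every lift in $\alpha^{-1}(X)$ restrict to the trivial deformation on each $U_i$; by the independence-of-cover results of Section 4 and the fact that locally trivial deformations over an artinian base trivialise on affines, this loses no information. After fixing such trivialisations, $X$ is described by a $1$-cocycle $(\theta_{ij})$ valued in the sheaf of deformation automorphisms of $\mathcal{O}_{X_0}\otimes_k A$, and $\alpha^{-1}(X)$ is described by the $1$-cocycles valued in the sheaf of deformation automorphisms of $\mathcal{O}_{X_0}\otimes_k B$ that lift $(\theta_{ij})$, modulo the coboundary action of re-trivialising a lift over $B$. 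Modifying the trivialisations of $X$ over $A$ replaces $(\theta_{ij})$ by a cohomologous cocycle, and this is where the indeterminacy quotiented out by $H^1(\operatorname{Der}_k(\mathcal{O}_{X_0}))$ on the left-hand side enters, since the kernel of $\operatorname{HS}^B\to\operatorname{HS}^A$, and of the corresponding automorphism sheaves, is $\operatorname{Der}_k(\mathcal{O}_{X_0})\otimes_k k\cdot t\cong\operatorname{Der}_k(\mathcal{O}_{X_0})$.

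Next I would identify the two sides. Applying the obstruction map $\gamma$ of Definition \ref{gamma: Aut_n > Ob_n, V_n} to each transition automorphism $\theta_{ij}$ produces a $1$-cochain $(\gamma(\theta_{ij}))$ valued in $\operatorname{Ob}^\Phi_{X_0}$; since $\gamma$ is additive under composition this is a cocycle, and Theorem \ref{first obstruction to a couple} says its class in $H^1(\operatorname{Ob}^\Phi_{X_0})$ is the obstruction to choosing, after possibly re-trivialising $X$, simultaneous lifts $\tilde\theta_{ij}$ of all the $\theta_{ij}$, i.e. to the existence of a ``couple''. For a couple, the coboundary $\tilde\theta_{ij}\tilde\theta_{jk}\tilde\theta_{ik}^{-1}$ is a $2$-cochain valued in $\operatorname{Der}_k(\mathcal{O}_{X_0})$, and Theorem \ref{second obstruction to a couple} controls, in $H^2(\operatorname{Der}_k(\mathcal{O}_{X_0}))$ and up to the $H^0(\operatorname{Ob}^\Phi_{X_0})$-indeterminacy in the choice of the $\tilde\theta_{ij}$, when it can be made to vanish. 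The set $V(X)$ of Definition \ref{V(X)} is then the set of couples over $X$ for which this secondary obstruction vanishes, equivalently those couples that rigidify to an honest locally trivial lift over $B$, and $L(X)$ is the group whose action records the remaining choices (re-trivialisations of $X$ over $A$ and of its lifts over $B$ that do not already change the associated class). The claimed bijection sends the isomorphism class of $\tilde X\in\alpha^{-1}(X)$ to the class in $V(X)/L(X)$ of the couple given by its transition automorphisms; conversely a couple in $V(X)$ rigidifies to a cocycle (using the vanishing defining membership in $V(X)$, via Theorem \ref{second obstruction to a couple} and Proposition \ref{modding out}), which glues a locally trivial deformation over $B$ lifting $X$.

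It then remains to verify that these two assignments are mutually inverse modulo the stated quotients: that changing the cover, the $A$-trivialisations, or the $B$-trivialisations of a lift moves the associated couple within a single $L(X)$-orbit, and that two lifts whose couples lie in the same $L(X)$-orbit differ precisely by a class in $H^1(\operatorname{Der}_k(\mathcal{O}_{X_0}))$. I expect the main obstacle to be exactly this last bookkeeping: disentangling the overlapping ambiguities (cover, the two layers of trivialisations, and the auxiliary splitting data) and showing that after quotienting by $L(X)$ on one side exactly the $H^1(\operatorname{Der}_k(\mathcal{O}_{X_0}))$-ambiguity survives on the other. Concretely this is a step-by-step filtration argument along the order filtration of $\operatorname{HS}^B$, whose graded pieces are copies of $\operatorname{Der}_k(\mathcal{O}_{X_0})$, combined with the decomposition Theorem \ref{decomposition thm} and the identification $\operatorname{Ob}^\Phi_R=\operatorname{Ob}^{p^i}_R$ (so that the obstruction cochains genuinely take values in the coherent sheaf $\operatorname{Ob}^\Phi_{X_0}$ of Theorem \ref{Ob sheaf}); one has to match the resulting \v{C}ech differentials with the definitions of $\gamma$, $V(X)$ and $L(X)$.
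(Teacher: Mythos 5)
Your high-level plan --- fix a fine enough affine cover, present everything as \v{C}ech data (couples, lifts, trivialisations), and invoke Theorems \ref{first obstruction to a couple}, \ref{second obstruction to a couple} together with Proposition \ref{modding out} --- is indeed the paper's approach, and you correctly identify the auxiliary ingredients (Theorem \ref{affine}, Corollary \ref{cech cor}, and the identification $\operatorname{Ob}^\Phi_{X_0}=\operatorname{Ob}^{p^{i(\Phi)}}_{X_0}$ making $\operatorname{Ob}^\Phi_{X_0}$ coherent). However, there are two concrete gaps.

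First, you mis-identify $V(X)$. In Definition \ref{V(X)}, $V_Y(X):=(g_Y^\Phi)^{-1}(0)$ is a subset of $\check H^0(\mathfrak U,\operatorname{Ob}^\Phi_{X_0})$, not a ``set of couples over $X$ for which the secondary obstruction vanishes.'' Because of this, your description of the map realizing the bijection (``send $\widetilde X$ to the class of its couple'') is circular: it never produces an element of $\check H^0(\operatorname{Ob}^\Phi_{X_0})$. What you are missing is the map $h_Y$ of Proposition \ref{h_Y}: one first fixes a reference lift $Y$ of $X$ to $B$, and then $h_Y$ assigns to every other lift $Z$ an element of $\check H^0(\mathfrak U,\operatorname{Ob}^\Phi_{X_0})$ by comparing the trivialising data $\lambda(Z)$ and $\lambda(Y)$. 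Theorem \ref{second obstruction to a couple} shows that the lifts $Z$ satisfying the cocycle conditions map under $h_Y$ exactly onto $V_Y(X)$; Proposition \ref{modding out} shows that $h_Y(Z^1)-h_Y(Z^2)\in L(X)$ if and only if $Z^1,Z^2$ lie in the same $\check H^1(\operatorname{Der}_k(\mathcal O_{X_0}))$-orbit, and that every $L(X)$-translate is realised. So $h_Y$ itself descends to the claimed bijection. Without $h_Y$ you have no candidate map, and the dependence of everything on the choice of $Y$ (which you never make) is precisely what ``$V(X)$ is unique up to linear transformations'' in Theorem \ref{not cech version} records.

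Second, the remaining work you anticipate (``a step-by-step filtration argument along the order filtration of $\operatorname{HS}^B$, combined with the decomposition Theorem \ref{decomposition thm}'') is not what is needed at this stage. The decomposition theorem was already consumed in Section 3 to prove $\operatorname{Ob}^\Phi_R=\operatorname{Ob}^{q}_R$ and hence coherence of $\operatorname{Ob}^\Phi_{X_0}$; no further filtration argument on $\operatorname{HS}^B$ appears in the proof. The bookkeeping you are worried about is entirely contained in Proposition \ref{modding out}, which you cite but apparently do not realise already finishes the argument over a fixed cover; all that is then left is to observe that the whole construction is compatible with refinement of the cover, so the \v{C}ech cohomologies may be replaced by sheaf cohomology.
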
 
On the left hand side, we take the quotient of the group action by $H^1(\operatorname{Der}_k(\mathcal{O}_{X_0}))$; see \cite[Remark 2.15]{Sch}. Note that the above isomorphic sets are either empty or a single point when $\operatorname{Def}'_{X_0}$ satisfies Schlessinger's $(H_1)$ (see the proof of \cite[Theorem 2.11]{Sch}). Thus, these sets measure the degree of non-pro-representability of $\operatorname{Def}'_{X_0}$ in a sense. We note that, in characteristic $0$, we have $\operatorname{Ob}^\Phi_{X_0}=0$ and in particular $\operatorname{ker}\nu _X^\Phi/L(X)$ cannot consist of more than one point. This also explains why $\operatorname{Def}'_{X_0}$ satisfies $(H_1)$ in characteristic $0$.

In Section 6, we construct an algebraic curve $C_0$ whose locally trivial deformation functor does not satisfy Schlessinger's $(H_1)$. The construction itself is valid over any field of positive characteristic. To confirm that $\operatorname{Def}'_{C_0}$ does not satisfy $(H_1)$, the above Theorem \ref{main thm 3} is used in an essential way in the case when the small extension is $\Pi:0\to k\cdot \lambda^p\to k[\lambda]\overset{}{\to} k[\varepsilon]\to 0$ where $\lambda^{p+1}=0,\ \varepsilon^p=0$ and $X=C$ is the trivial deformation of $C_0$. Instead of directly calculating $\alpha^{-1}(C)/H^1(\operatorname{Der}_k(\mathcal{O}_{X_0}))$ we study the above $L(C)$ based on calculating the global deformation automorphisms. In fact, we show that the inclusion $L(C)\subset H^0(\operatorname{Ob}_{C_0}^\Pi)$ is proper (Lemma \ref{inclusion proper lem}). Also, we have $\operatorname{ker}\nu _C^\Pi=H^0(\operatorname{Ob}_{C_0}^\Pi)$ in this case. Thus, the set $\operatorname{ker}\nu _C^\Pi/L(C)$ consists of more than two points, and we can prove that $\operatorname{Def}'_{C_0}$ does not satisfy $(H_1)$.

\textbf{Acknowledgements.} The author would like to thank Professor Keiji Oguiso and the members of Oguiso's laboratory for their interest in this work and their valuable comments and suggestions. The author would like to thank Professors Keiji Oguiso, Stefan Schröer, Xun Yu, Yujiro Kawamata and Gebhard Martin and the members of Oguiso’s laboratory for their valuable comments on the example in Section 6.

\section{Hasse-Schmidt derivations}

We first recall basic facts of Hasse-Schmidt derivations (cf. \cite{Has}, \cite[Chapter 1]{Mac}, \cite[Section 27]{Mat}) and results from \cite{Mac} and \cite{Her}.

\begin{definition}[= {\cite[Definition 1.2.1–1.2.6]{Mac}}] \label{Hasse Schmidt}
Let $R$ be a $k$-algebra. Let $m\ge 1$ be an integer. Then, a \emph{Hasse-Schmidt derivation} of length $m$ is a sequence $D=(D_i)_{i=0}^m$ of $k$-linear endomorphisms $R\to R$ such that $D_0=\operatorname{id}$ and that $$D_i(xy)= \sum_{j+k=i}D_j(x)D_k(y)$$ for every $x,y\in R, i\ge 1.$ Let $E=(E_i)_{i=0}^m$ be another Hasse-Schmidt derivation. We define the \emph{composition} of $D$ and $E$ by
$$D\circ E=(F_i)_{i=0}^m \mathrm{\ \ where \ } \ F_i(x)=\sum_{j+k=i}D_j(E_k(x)).$$
The symbol $\operatorname{HS}_k^m(R)(=\operatorname{HS}_k(R,m))$ denotes the set of all the Hasse-Schmidt derivations of length $m.$ It is a group whose multiplication is given by $\circ$. Let $x\in R$. We define 
$$ x \bullet: \operatorname{HS}_k^m(R)\to\operatorname{HS}_k^m(R)$$
by
$$x\bullet (D_i)_{i=0}^m=(x^iD_i)_{i=0}^m.$$
For $n\ge1,$ we define
$$[n]:\operatorname{HS}_k^m(R)\to\operatorname{HS}_k^{mn}(R)$$
by
$$D[n]=(E_i)_{i=0}^{mn}\mathrm{\ \ where\ }\ E_{jn}=D_j \mathrm{\ for\ }0\le j\le m \mathrm{\ and \ } E_i=0\mathrm{\ otherwise.}$$
A Hasse-Schmidt derivation $D=(D_i)_{i=0}^m$ defines a natural $k$-algebra homomorphism $$R\to R[\varepsilon]/\varepsilon^{m+1},\ x\mapsto \sum_{i=0}^m D_i(x)\varepsilon^i.$$ 
Let $1\le l \le m$. We define the \emph{truncation map} $$\tau:\operatorname{HS}_k^m(R)\to \operatorname{HS}_k^l(R)$$ by $$(D_i)_{i=0}^m\mapsto(D_i)_{i=0}^l.$$ It is a group homomorphism and commutes with $x\bullet$ for $x\in R$.
Note that the set $\operatorname{HS}_k^1(R)$ can be identified with the $R$-module $\operatorname{Der}_k(R,R)$ under $$D=(\operatorname{id},D_1)\mapsto D_1.$$
\end{definition}

\begin{definition}[= {\cite[Definition 2.1.1]{Mac}}] \label{m- inf- integrable der}Let $R$ be a $k$-algebra and let $1\le m < \infty$. We define the module of\emph{ $m$-integrable derivations }$\operatorname{Der}_k^m(R)$ as the image of
$$\operatorname{HS}_k^m(R)\overset{\tau}{\to}\operatorname{HS}_k^1(R)\simeq \operatorname{Der}_k(R,R).$$ It is a sub-$R$-module of $\operatorname{Der}_k(R,R)$.
\end{definition}

\begin{lemma}\label{n m lift} Let $1\le n \le m < \infty $ be integers and $T$ be a formally smooth $k$-algebra. Then, any Hasse-Schmidt derivation $D\in \operatorname{HS}_k^n(T)$ lifts to $E\in \operatorname{HS}_k^m(T)$.
\end{lemma}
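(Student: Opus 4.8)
The plan is to exploit the formal smoothness of $T$ directly via the lifting property for Hasse-Schmidt derivations regarded as algebra homomorphisms into truncated polynomial rings. Recall that a Hasse-Schmidt derivation $D\in\operatorname{HS}^n(T)$ is the same datum as a $k$-algebra homomorphism $\varphi_D\colon T\to T[\varepsilon]/\varepsilon^{n+1}$ with $\varphi_D\equiv\operatorname{id}\pmod{\varepsilon}$ (i.e.\ lifting the identity along the natural projection $T[\varepsilon]/\varepsilon^{n+1}\to T$). Similarly an element of $\operatorname{HS}^m(T)$ is a homomorphism $T\to T[\varepsilon]/\varepsilon^{m+1}$ lifting the identity. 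So the claim reduces to lifting $\varphi_D$ along the surjection $T[\varepsilon]/\varepsilon^{m+1}\to T[\varepsilon]/\varepsilon^{n+1}$.

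First I would observe that this surjection has nilpotent kernel $(\varepsilon^{n+1})/(\varepsilon^{m+1})$ (a square-zero ideal when $m\le 2n+1$, and in general a nilpotent ideal, which one handles by factoring the surjection into a finite tower of square-zero extensions). Then I would invoke the defining lifting property of formal smoothness: since $T$ is formally smooth over $k$, for any $k$-algebra surjection $R'\twoheadrightarrow R''$ with nilpotent kernel and any $k$-homomorphism $T\to R''$, there exists a $k$-homomorphism $T\to R'$ lifting it. Applying this (inductively up the tower of square-zero quotients of $T[\varepsilon]/\varepsilon^{m+1}$ over $T[\varepsilon]/\varepsilon^{n+1}$) to the homomorphism $\varphi_D\colon T\to T[\varepsilon]/\varepsilon^{n+1}$ produces a $k$-homomorphism $\psi\colon T\to T[\varepsilon]/\varepsilon^{m+1}$ lifting $\varphi_D$.

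It then remains to check that $\psi$ still reduces to the identity modulo $\varepsilon$: but this is automatic, since the composite $T\xrightarrow{\psi}T[\varepsilon]/\varepsilon^{m+1}\to T$ factors through $T\xrightarrow{\varphi_D}T[\varepsilon]/\varepsilon^{n+1}\to T$, which is $\operatorname{id}_T$ by hypothesis. Hence $\psi$ corresponds to a genuine Hasse-Schmidt derivation $E=(E_i)_{i=0}^m\in\operatorname{HS}^m(T)$ with $E_0=\operatorname{id}$, and by construction $E_i=D_i$ for $0\le i\le n$, i.e.\ $\tau(E)=D$.

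The only real subtlety — and the step I expect to require the most care — is the reduction to square-zero extensions, since formal smoothness as usually stated gives the lifting property against square-zero (or nilpotent) ideals, whereas $(\varepsilon^{n+1})/(\varepsilon^{m+1})$ is merely nilpotent; writing the intermediate chain $T[\varepsilon]/\varepsilon^{m+1}\twoheadrightarrow T[\varepsilon]/\varepsilon^{m}\twoheadrightarrow\cdots\twoheadrightarrow T[\varepsilon]/\varepsilon^{n+1}$, in which each step has square-zero kernel $(\varepsilon^{j})/(\varepsilon^{j+1})\cong T$, and lifting one step at a time, resolves this cleanly. One should also note that the standard definition of formal smoothness is stated for a continuous/topological version, but for the discrete truncated rings here the ordinary lifting criterion applies verbatim, so no extra topological hypotheses are needed.
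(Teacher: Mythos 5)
Your proof is correct and follows essentially the same route as the paper's: the paper simply draws the lifting diagram for the surjection $T[\varepsilon]/\varepsilon^{m+1}\twoheadrightarrow T[\varepsilon]/\varepsilon^{n+1}$ against the homomorphism $T\to T[\varepsilon]/\varepsilon^{n+1}$ induced by $D$, invoking formal smoothness. Your write-up supplies the details the paper leaves implicit (factoring the nilpotent extension into square-zero steps, checking the lift still reduces to the identity mod $\varepsilon$).
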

\begin{proof}
This corresponds to the existence of the dotted arrow in the following diagram
\[\begin{tikzcd}
	&&& T \\
	0 & {(\varepsilon^{n+1})} & {T[\varepsilon]/\varepsilon^{m+1}} & {T[\varepsilon]/\varepsilon^{n+1}} & 0
	\arrow[dashed, from=1-4, to=2-3]
	\arrow[from=1-4, to=2-4]
	\arrow[from=2-1, to=2-2]
	\arrow[from=2-2, to=2-3]
	\arrow[from=2-3, to=2-4]
	\arrow[from=2-4, to=2-5]
\end{tikzcd}\]
where the vertical arrow is induced from $D$.
\end{proof}
\begin{lemma}\label{smoothness} Let $m\ge1 $ be an integer, $T$ be a formally smooth $k$-algebra, $I\subset  T$ be an ideal and $R:=T/I$. Then, any $D\in \operatorname{HS}_k^m(R)$ lifts to $E\in \operatorname{HS}_k^m(T)$.
\end{lemma}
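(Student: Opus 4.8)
The plan is to translate the statement into one about $k$-algebra sections and then produce the lift in a single step by applying the formal smoothness of $T$ to a suitably chosen surjection with nilpotent kernel.

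First I would use the correspondence recalled in Definition \ref{Hasse Schmidt}: a Hasse-Schmidt derivation $D=(D_i)_{i=0}^m\in\operatorname{HS}^m(R)$ is the same datum as the $k$-algebra homomorphism $\phi_D\colon R\to R[\varepsilon]/\varepsilon^{m+1}$, $\bar x\mapsto\sum_{i=0}^m D_i(\bar x)\varepsilon^i$, which is a section of the projection $R[\varepsilon]/\varepsilon^{m+1}\to R$. Writing $\pi\colon T\to R$ for the quotient, lifting $D$ to $\operatorname{HS}^m(T)$ amounts to finding a section $\psi\colon T\to T[\varepsilon]/\varepsilon^{m+1}$ of $T[\varepsilon]/\varepsilon^{m+1}\to T$ whose image in $R[\varepsilon]/\varepsilon^{m+1}$ equals $\phi_D\circ\pi$. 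The obvious attempt, to lift $\phi_D\circ\pi$ directly along $T[\varepsilon]/\varepsilon^{m+1}\to R[\varepsilon]/\varepsilon^{m+1}$, does not work as stated: the kernel of that map is $I\otimes_k k[\varepsilon]/\varepsilon^{m+1}$, which need not be nilpotent, and moreover such a lift need not reduce to $\operatorname{id}_T$ modulo $\varepsilon$.

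To repair both points at once I would pass to the fiber product $Q:=T\times_R R[\varepsilon]/\varepsilon^{m+1}$ (formed along $\pi$ and $R[\varepsilon]/\varepsilon^{m+1}\to R$) together with the $k$-algebra homomorphism $\lambda\colon T[\varepsilon]/\varepsilon^{m+1}\to Q$, $u\mapsto(u\bmod\varepsilon,\ \bar u)$, where $\bar u$ denotes the image of $u$ in $R[\varepsilon]/\varepsilon^{m+1}$. A direct check shows that $\lambda$ is surjective and that $\ker\lambda$ is the set of elements of the ideal $(\varepsilon)\subset T[\varepsilon]/\varepsilon^{m+1}$ all of whose coefficients lie in $I$; since $(\varepsilon)^{m+1}=0$, this kernel is nilpotent. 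The pair $(\operatorname{id}_T,\ \phi_D\circ\pi)$ defines a $k$-algebra homomorphism $T\to Q$, the required compatibility $\pi\circ\operatorname{id}_T=(\phi_D\circ\pi)\bmod\varepsilon$ holding precisely because $\phi_D$ is a section. Applying the formal smoothness of $T$ over $k$ — in the form of lifting along a surjection with nilpotent (not necessarily square-zero) kernel, exactly as in Lemma \ref{n m lift} — yields a $k$-algebra homomorphism $\psi\colon T\to T[\varepsilon]/\varepsilon^{m+1}$ lifting $T\to Q$ through $\lambda$.

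It then remains to read off the conclusion. The reduction of $\psi$ modulo $\varepsilon$ is the first component of its image in $Q$, i.e. $\operatorname{id}_T$, so $\psi$ is the homomorphism attached to some $E=(E_i)_{i=0}^m\in\operatorname{HS}^m(T)$; and the equality of second components gives $\overline{E_i(f)}=D_i(\pi(f))$ for all $f\in T$ and all $i$. For $f\in I$ this forces $E_i(I)\subseteq I$, and for arbitrary $f$ it says precisely that $E$ induces $D$ on $R=T/I$. I expect the only genuine obstacle to be the one already flagged: the kernel $I\otimes_k k[\varepsilon]/\varepsilon^{m+1}$ of $T[\varepsilon]/\varepsilon^{m+1}\to R[\varepsilon]/\varepsilon^{m+1}$ is not nilpotent, so formal smoothness cannot be invoked for it directly; the passage to $Q$ is exactly what replaces it by a nilpotent kernel while simultaneously pinning the constant term of the lift to $\operatorname{id}_T$. (A slightly longer alternative is to build $\psi$ inductively along the $\varepsilon$-adic filtration, lifting at stage $l$ along the square-zero extension $T[\varepsilon]/\varepsilon^{l+1}\to T[\varepsilon]/\varepsilon^{l}\times_{R[\varepsilon]/\varepsilon^{l}}R[\varepsilon]/\varepsilon^{l+1}$, starting from $\psi_0=\operatorname{id}_T$, which uses only square-zero extensions.)
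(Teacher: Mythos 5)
Your proof is correct and is essentially the paper's argument recast in fiber-product language: your $Q=T\times_R R[\varepsilon]/\varepsilon^{m+1}$ is isomorphic to the paper's $S/J$ where $J\subset T[\varepsilon]/\varepsilon^{m+1}$ is the ideal generated by $I\varepsilon$, your $\lambda$ is the quotient map $S\to S/J$, and your $(\operatorname{id}_T,\phi_D\circ\pi)$ is the paper's homomorphism $f:T\to S/J$. Both proofs then invoke formal smoothness along the nilpotent kernel to lift, so the two routes coincide up to notation; if anything, your phrasing makes it slightly more transparent why the lift automatically reduces to $\operatorname{id}_T$ modulo $\varepsilon$.
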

\begin{proof}
Let $S=T[\varepsilon]/\varepsilon^{m+1}$ and let $J\subset S$ be
the ideal generated by $x\varepsilon, x\in I$. The $k$-linear map
$$f:T\to S/J, \ x\mapsto x+ \sum_{i=1}^m D_i(\overline{x})\varepsilon^i $$
is a $k$-algebra homomorphism, where $\overline{x}$ is the image of $x$ in $R$. Because $T$ is formally smooth and because $J$ is nilpotent, $f$ lifts to 
$\widetilde{f}:T\to S=T[\varepsilon]/\varepsilon^{m+1}$
and $\widetilde{f}$ induces $E\in \operatorname{HS}_k^m(T)$ that is a lift of $D$.
\end{proof}
Let $R$ be a $k$-algebra, $m\ge 1$ an integer and $S\subset R$ a multiplicatively closed subset. Then, we have a canonical map
$$\operatorname{HS}_k^m(R)\to\operatorname{HS}_k^m(S^{-1}R)$$
and it induces$$\alpha: S^{-1}\operatorname{Der}_k^m(R)\to \operatorname{Der}_k^m(S^{-1}R).$$

Now, let us prove a theorem by Macarro that states that $m$-integrable derivations are compatible with localizations.

\begin{theorem}[= {\cite[Corollary 2.3.5]{Mac}}]
\label{localization}
    Let $R$ be a $k$-algebra essentially of finite type over $k$. Let $m\ge 1$ an integer. Let $S\subset R$ a multiplicatively closed subset. Then, we have a canonical isomorphism $$\alpha: S^{-1}\operatorname{Der}_k^m(R)\overset{\sim}{\to} \operatorname{Der}_k^m(S^{-1}R).$$
\end{theorem}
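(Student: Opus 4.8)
The plan is to establish the isomorphism by proving surjectivity of $\alpha$, since injectivity (together with well-definedness) is comparatively formal. First I would reduce to the local case: the statement is compatible with composing localizations, and a module map is an isomorphism if and only if it is one after localizing at every maximal ideal, so it suffices to treat $S = R \setminus \mathfrak{p}$ for a prime $\mathfrak{p}$, or even to show $\alpha$ becomes an isomorphism after localizing the target appropriately. Alternatively, one localizes so that $R$ is essentially of finite type and $S^{-1}R$ is the object of interest, and works directly with a presentation.

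The heart of the argument is surjectivity. Given $\bar D \in \operatorname{Der}_k^m(S^{-1}R)$, by definition it extends to a Hasse-Schmidt derivation $\bar E = (\bar E_i)_{i=0}^m \in \operatorname{HS}^m(S^{-1}R)$. Writing $R$ essentially of finite type as a localization of $T/I$ with $T = k[x_1,\dots,x_N]$ formally smooth, I would use Lemma \ref{smoothness} (applied to $S^{-1}R$, which is also a quotient of a localization of $T$, hence of a formally smooth algebra) to lift $\bar E$ to a Hasse-Schmidt derivation of $T_{\text{loc}}$, or more efficiently work with the $k$-algebra homomorphism $S^{-1}R \to S^{-1}R[\varepsilon]/\varepsilon^{m+1}$ determined by $\bar E$ and clear denominators. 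The key point, following \cite[Proposition 1.3.5]{Mac} as the paper indicates for the analogous $\operatorname{Ob}$ statement, is that each $\bar E_i$ is determined by finitely many values on generators, and those values are fractions; by multiplying the formal parameter $\varepsilon$ by a suitable unit — i.e. replacing $\bar E = (\bar E_i)$ by $s \bullet \bar E = (s^i \bar E_i)$ for an appropriate $s \in S$ — one can arrange that all the relevant denominators disappear, so that $s \bullet \bar E$ is the image under $\operatorname{HS}^m(R) \to \operatorname{HS}^m(S^{-1}R)$ of some $E \in \operatorname{HS}^m(R)$. Truncating, $\tau(E) \in \operatorname{Der}_k^m(R)$ maps to $s \cdot \bar D$, and since $s$ is a unit in $S^{-1}R$, this shows $\bar D \in \operatorname{image}(\alpha)$ after dividing by $s$ in $S^{-1}\operatorname{Der}_k^m(R)$; more precisely $\tau(E)/s^{?}$ lands in the source and maps to $\bar D$. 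Care is needed with the exact power of $s$ absorbed by the $\bullet$-action versus the $R$-module structure on derivations, but this is a bookkeeping matter.

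For injectivity, suppose $\delta/s \in S^{-1}\operatorname{Der}_k^m(R)$ maps to $0$ in $\operatorname{Der}_k^m(S^{-1}R)$; then $\delta \in \operatorname{Der}_k^m(R) \subset \operatorname{Der}_k(R,R)$ dies in $\operatorname{Der}_k(S^{-1}R, S^{-1}R)$, and since $R$ is essentially of finite type, $\operatorname{Der}_k(R,R)$ is finitely generated, so localization of ordinary derivations commutes with $S^{-1}$ and $\delta$ already dies in $S^{-1}\operatorname{Der}_k(R,R)$, giving $\delta/s = 0$ in $S^{-1}\operatorname{Der}_k^m(R)$. The main obstacle I anticipate is the denominator-clearing step: one must verify that a single $s \in S$ can be chosen uniformly so that $s \bullet \bar E$ descends to $R$ — this requires that the cotangent-type data (the action of the $\bar E_i$ on a finite generating set, together with compatibility with the relations in $I$) involve only finitely many denominators, which is exactly where "essentially of finite type" is used, and where the Leibniz-rule constraints linking the different $\bar E_i$ must be checked to survive multiplication by $s^i$. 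I would isolate this as a lemma: the image of $\operatorname{HS}^m(R) \to \operatorname{HS}^m(S^{-1}R)$ together with the $S \bullet (-)$ action generates everything, mirroring Macarro's Proposition 1.3.5.
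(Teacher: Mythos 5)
Your proposal is correct and follows essentially the same route as the paper: injectivity reduces to the isomorphism $S^{-1}\operatorname{Der}_k(R)\simeq\operatorname{Der}_k(S^{-1}R)$, while surjectivity lifts the given $\bar E\in\operatorname{HS}^m(S^{-1}R)$ to a localization of a polynomial ring via Lemma~\ref{smoothness} and then uses the $s\bullet$ action to clear denominators before descending to $R$. The one place you are slightly more compressed than the paper is the denominator-clearing step: the paper does it in two explicit stages (first a single $s_1\in S'$ so that $s_1\bullet D'$ extends to $\operatorname{HS}^m(T)$, then a second $s_2\in S'$ so that the resulting HS derivation sends the finitely generated ideal $I$ into itself, hence descends to $R$), whereas you fold both into one vaguely phrased step; but you correctly flag that this is where finiteness of generation enters and correctly observe that $s\bullet$ preserves the Hasse--Schmidt axioms (since it amounts to the substitution $\varepsilon\mapsto s\varepsilon$), so the gap is only one of exposition, not of substance.
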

\begin{proof}
As $S^{-1}\operatorname{Der}_k(R)\to \operatorname{Der}_k(S^{-1}R)$ is an isomorphism, $\alpha$ is injective. For surjectivity, let us take $D=(D_i)_{i=0}^m\in\operatorname{HS}_k^m(S^{-1}R)$. Let $T$ be a localization of $k[X_1,\dots,X_n]$ and $I\subset T$ an ideal such that $T/I=R.$ Let $S'\subset T$ be the preimage of $S$. There exists $D'=(D_i')_{i=0}^m\in\operatorname{HS}_k^m(S'^{-1}T)$ such that $$D'\operatorname{mod}S'^{-1}I=D.$$ As $m$ and $n$ are finite, there exists $s_1\in S'$ such that $s_1\bullet D'$ is induced from $\widetilde{D}\in \operatorname{HS}_k^m(T)$. Again, as $m$ and $n$ are finite and $I$ is finitely generated, there exists $s_2\in S'$ such that $s_2\bullet \widetilde{D}=\widetilde{E}$ satisfies $\widetilde{E}_i(I)\subset I$ for every $0 \le i \le m$. Then, $E= \widetilde{E} \operatorname{mod} I$ is an element of $\operatorname{HS}_k^m(R)$ and $(s_1s_2)^{-1}(E_1)=D_1$. This shows the surjectivity of $\alpha$.
\end{proof}

The following definition is inspired by \cite[Lemma 2.10]{Her}.
\begin{definition}\label{xi operator}
Let $R$ be a $k$-algebra. Let $m\ge 1$ and $n\ge2$ integers such that $p$ does not divide $n$. Let us take a primitive $n$-th root of unity $\zeta_n\in k$. We define a map
$$\chi_n: \operatorname{HS}_k^m(R)\to \operatorname{HS}_k^m(R) $$
by
$$\chi_n(D)=(\frac{\zeta_n^{n-1}}{\sqrt[n]{n}}\bullet D)\circ(\frac{\zeta_n^{n-2}}{\sqrt[n]{n}}\bullet D)\circ\dots\circ (\frac{\zeta_n^{0}}{\sqrt[n]{n}}\bullet D).$$
\end{definition}

\

Although the map $\chi_n$ depends on the choice of $\zeta_n$ and $\sqrt[n]{n}$, any such choice will work in the following arguments. Note that $\chi_n$ commutes with truncation maps.

\begin{lemma}\label{xi operator lem}
In the setting of Definition \ref{xi operator}, let us assume $n\nmid m$. Let $D\in \operatorname{HS}_k^m(R)$ be such that, for every $1\le l < m$ such that $n\nmid l$, we have $D_l=0$. Then, for $E=\chi_n(D),$ we have $E_m=0$.
\end{lemma}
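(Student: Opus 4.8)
The plan is to compute the top-degree component $E_m$ of $E=\chi_n(D)$ directly and to see that it collapses to a geometric sum that vanishes. First I would introduce the shorthand $a=1/\sqrt[n]{n}$ and $\zeta=\zeta_n$, so that each factor occurring in $\chi_n(D)$ is $a\zeta^r\bullet D$ for $r=0,\dots,n-1$. Using that $(c\bullet D)_j=c^jD_j$ together with the composition formula of Definition \ref{Hasse Schmidt}, the degree-$m$ part expands as a sum over tuples $(j_0,\dots,j_{n-1})$ of nonnegative integers with $j_0+\cdots+j_{n-1}=m$:
\[
E_m=\sum_{j_0+\cdots+j_{n-1}=m}\Bigl(\prod_{r=0}^{n-1}(a\zeta^r)^{j_r}\Bigr)\,D_{j_{n-1}}\circ D_{j_{n-2}}\circ\cdots\circ D_{j_0},
\]
and since $\sum_r j_r=m$ the scalar in front of each term is just $a^m\zeta^{\sum_r rj_r}$.

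Then I would use the hypothesis $D_l=0$ for $1\le l<m$ with $n\nmid l$ to discard almost every summand. In a tuple contributing a nonzero operator, every $j_r$ with $1\le j_r<m$ must be divisible by $n$; since also $0\le j_r\le m$, each $j_r$ lies in $\{0\}\cup n\mathbb{Z}_{>0}\cup\{m\}$. If no $j_r$ equals $m$, then all $j_r$ are multiples of $n$, forcing $n\mid m$, contrary to assumption; hence exactly one $j_r$ equals $m$ and the rest vanish. So only the $n$ tuples with a single entry $m$ (in position $r=0,\dots,n-1$) survive, and for each the composite $D_{j_{n-1}}\circ\cdots\circ D_{j_0}$ reduces to $D_m$, while $\sum_s sj_s=rm$. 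This gives $E_m=a^m\bigl(\sum_{r=0}^{n-1}\zeta^{rm}\bigr)D_m$.

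Finally, because $\zeta$ is a primitive $n$-th root of unity and $n\nmid m$, the element $\zeta^m\ne 1$ satisfies $(\zeta^m)^n=1$, so $\sum_{r=0}^{n-1}(\zeta^m)^r=0$ and therefore $E_m=0$. I expect the only delicate point to be the combinatorial bookkeeping of the middle step, namely pinning down precisely which index tuples survive the hypothesis and checking that the condition $n\nmid m$ simultaneously rules out the ``all $j_r$ divisible by $n$'' possibility and annihilates the leftover geometric sum; the rest is a mechanical use of the composition and $\bullet$-action formulas of Definition \ref{Hasse Schmidt}.
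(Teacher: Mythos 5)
Your proof is correct and follows essentially the same route as the paper's, which simply writes down the final formula $E_m=\sum_{r=0}^{n-1}(a\zeta^r)^m D_m$ and observes the geometric sum vanishes. You have spelled out the combinatorial step that the paper leaves implicit — namely, that the hypothesis forces every contributing index tuple $(j_0,\dots,j_{n-1})$ with $\sum j_r=m$ to have exactly one entry equal to $m$ (since otherwise all entries would be multiples of $n$, contradicting $n\nmid m$) — and the rest is identical.
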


\begin{proof}
    We have
\begin{align*}
    E_m&=(\frac{\zeta_n^{n-1}}{\sqrt[n]{n}})^m\cdot D_m+(\frac{\zeta_n^{n-2}}{\sqrt[n]{n}})^m\cdot D_m+\dots+(\frac{\zeta_n^{0}}{\sqrt[n]{n}})^m\cdot D_m \\
    &=(\zeta_n^{m\cdot(n-1)}+ \zeta_n^{m\cdot(n-2)}+\dots+\zeta_n^{m\cdot 0})\cdot \frac{1}{n^{\frac{m}{n}}}D_m \\
    &=0.\end{align*}\end{proof}
    
\begin{lemma}\label{xi operator lem 2}
In the setting of Definition \ref{xi operator}, let $D\in \operatorname{HS}_k^m(R)$ and $E=\chi_n(D)$. Then, for every $i\ge1$ such that $p^in\le m$, we have
$$E_{p^in}=D_{p^in}+ \mathrm{(noncommutative\ polynomial\ in\ } D_1,\dots, D_{p^in-1}).$$
\end{lemma}
\begin{proof} The coefficient of $D_{p^in}$ in $E_{p^in}$ is
    \begin{align*}
        &(\frac{\zeta_n^{n-1}}{\sqrt[n]{n}})^{p^in}+(\frac{\zeta_n^{n-2}}{\sqrt[n]{n}})^{p^in}+\dots+(\frac{\zeta_n^{0}}{\sqrt[n]{n}})^{p^in}\\
        =&(\zeta_n^{{p^in}\cdot(n-1)}+ \zeta_n^{{p^in}\cdot(n-2)}+\dots+\zeta_n^{{p^in}\cdot 0})\cdot \frac{1}{n^{\frac{{p^in}}{n}}}\\
        =&((\zeta_n^{{n}\cdot(n-1)}+ \zeta_n^{{n}\cdot(n-2)}+\dots+\zeta_n^{{n}\cdot 0})\cdot \frac{1}{n})^{p^i}\\
        =&1^{p^i}\\
        =&1. 
    \end{align*}
    \end{proof}

\begin{lemma}\label{xi operator lem 3}In the setting of Definition \ref{xi operator}, let us assume that $m=p^in$ with $ i\ge0$ and $ n\ge2$. Then for any $D\in \operatorname{HS}_k^m(R)$, there exists $D^{[n]}\in \operatorname{HS}_k^m(R)$ such that $D^{[n]}_l=0$ for every $1\le l < m$ with $n\nmid l$, and $$D^{[n]}_m=D_m+ \mathrm{(noncommutative\ polynomial\ in\ } D_1,\dots, D_{m-1}).$$
\end{lemma}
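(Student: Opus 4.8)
The plan is to construct $D^{[n]}$ by applying the operator $\chi_n$ repeatedly, each application enlarging the range of indices $l<m$ on which the components of the derivation vanish, with Lemmas \ref{xi operator lem} and \ref{xi operator lem 2} controlling the top component throughout.

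First I would isolate the single-step claim that does the work: if $D'\in\operatorname{HS}^m(R)$ already satisfies $D'_l=0$ for all $1\le l<l_0$ with $n\nmid l$, where $1\le l_0<m$ and $n\nmid l_0$, then $\chi_n(D')$ satisfies $(\chi_n(D'))_l=0$ for all $1\le l\le l_0$ with $n\nmid l$. To prove this, fix such an $l$; since $\chi_n$ commutes with the truncation $\tau\colon\operatorname{HS}^m(R)\to\operatorname{HS}^l(R)$, we have $(\chi_n(D'))_l=(\chi_n(\tau(D')))_l$, and $\tau(D')\in\operatorname{HS}^l(R)$ meets the hypotheses of Lemma \ref{xi operator lem} with $m$ there replaced by $l$ — namely $n\nmid l$ and $(\tau(D'))_j=D'_j=0$ for every $1\le j<l$ with $n\nmid j$, because $j<l\le l_0$ — so that lemma gives $(\chi_n(\tau(D')))_l=0$.

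With this in hand I would iterate: put $D^{(0)}:=D$; given $D^{(k)}$, stop and set $D^{[n]}:=D^{(k)}$ if $D^{(k)}_l=0$ for all $1\le l<m$ with $n\nmid l$, and otherwise let $l_0$ be the least such $l$ with $D^{(k)}_{l_0}\ne0$ (necessarily $l_0<m$, as $n\mid m$) and set $D^{(k+1)}:=\chi_n(D^{(k)})\in\operatorname{HS}^m(R)$. By the single-step claim, the least non-vanishing ``bad'' index $l$ (those with $n\nmid l$) strictly increases at each step; since there are only finitely many $l$ with $1\le l<m$ and $n\nmid l$, the process halts, and the resulting $D^{[n]}\in\operatorname{HS}^m(R)$ has $D^{[n]}_l=0$ for all $1\le l<m$ with $n\nmid l$, which is the first asserted property.

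It remains to follow the top component. Here I would use two facts: for $l<m$, $(\chi_n(D'))_l$ depends only on $D'_1,\dots,D'_l$ (being the $l$-th component of a composition of maps $c\bullet D'$); and $(\chi_n(D'))_m=D'_m+(\text{noncommutative polynomial in }D'_1,\dots,D'_{m-1})$ — for $m=p^in$ with $i\ge1$ this is Lemma \ref{xi operator lem 2}, while for $i=0$, i.e. $m=n$, the coefficient of $D'_m$ in $(\chi_n(D'))_m$ is $\sum_{j=0}^{n-1}(\zeta_n^{\,j}/\sqrt[n]{n})^{n}=\sum_{j=0}^{n-1}1/n=1$ and every remaining term of the composition is a scalar times a product of at least two components of positive degree $<n$. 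An induction on $k$ then yields $D^{(k)}_m=D_m+(\text{noncommutative polynomial in }D_1,\dots,D_{m-1})$, the substitution at each step never reintroducing $D_m$ since each $D^{(k)}_j$ with $j<m$ is a noncommutative polynomial in $D_1,\dots,D_j$; applied to $D^{[n]}$ this is the second asserted property. The only delicate point, I expect, is the single-step claim: making the commutation with truncation reduce the statement cleanly to Lemma \ref{xi operator lem}; everything after that is bookkeeping.
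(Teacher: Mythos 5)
Your proof is correct and fleshes out exactly the argument the paper compresses into ``apply $\chi_n$ iteratively sufficiently many times'': the single-step claim (via commutation of $\chi_n$ with truncation and Lemma \ref{xi operator lem}) gives termination, and Lemma \ref{xi operator lem 2} tracks the top component. Your observation that Lemma \ref{xi operator lem 2} as stated only covers $i\ge 1$, together with the direct computation of the coefficient $\sum_{j=0}^{n-1}\zeta_n^{jn}/n=1$ for the case $m=n$, correctly fills a small gap that the paper leaves implicit.
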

\begin{proof}Apply $\chi_n$ to $D$ iteratively sufficiently many times and denote the resulting element by $D^{[n]}\in \operatorname{HS}_k^m(R)$. By Lemma \ref{xi operator lem} and \ref{xi operator lem 2}, $D^{[n]}$ satisfies the desired properties. \end{proof}
Now, we can give an easier proof of a theorem by Hernández under the assumption that $k$ is algebraically closed.

\begin{theorem}[= {\cite[Theorem 4.1]{Her}}]\label{p^n leap} Let $R$ be a $k$-algebra essentially of finite type. For every $m\ge 2,$ the inclusion $\operatorname{Der}_k^m(R)\subset \operatorname{Der}_k^{m-1}(R) $ is proper only if $m$ is of the form $m=p^i$ for $i\ge 1$.
\end{theorem}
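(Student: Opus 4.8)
The plan is to establish the statement in the form: \emph{if $m\ge 2$ is not a power of $p$, then $\operatorname{Der}^m_k(R)=\operatorname{Der}^{m-1}_k(R)$}. Write $m=p^in$ with $p\nmid n$ and $n\ge 2$; since the inclusion $\operatorname{Der}^m_k(R)\subseteq\operatorname{Der}^{m-1}_k(R)$ is automatic via truncation, it suffices to prove $\operatorname{Der}^{m-1}_k(R)\subseteq\operatorname{Der}^m_k(R)$. As $R$ is essentially of finite type I would write $R=T/I$ with $T$ a localization of a polynomial $k$-algebra, hence formally smooth over $k$. Fix $\delta\in\operatorname{Der}^{m-1}_k(R)$ and a witness $D\in\operatorname{HS}^{m-1}(R)$ with $D_1=\delta$. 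By Lemma \ref{smoothness} I would lift $D$ to $\widetilde D\in\operatorname{HS}^{m-1}(T)$ with $\widetilde D_j(I)\subseteq I$ for all $j$ and $\widetilde D\bmod I=D$, and by Lemma \ref{n m lift} extend $\widetilde D$ to some $\widehat E\in\operatorname{HS}^m(T)$ with $\widehat E|_{m-1}=\widetilde D$. The only obstruction to $\widehat E$ descending to $R$ is that the new top term $\widehat E_m$ need not satisfy $\widehat E_m(I)\subseteq I$.

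The key idea is to correct $\widehat E$ by a group-theoretic twist built from $\chi_n$, designed to leave the degree-one term untouched while cancelling the bad part of $\widehat E_m$. Concretely, I would apply the iterated operator of Lemma \ref{xi operator lem 3} to $\widehat E$, obtaining $\widehat E^{[n]}\in\operatorname{HS}^m(T)$ with $\widehat E^{[n]}_l=0$ for every $1\le l<m$ with $n\nmid l$ (in particular $\widehat E^{[n]}_1=0$, using $n\ge 2$), with $\widehat E^{[n]}_m=\widehat E_m+(\text{noncommutative polynomial in }\widehat E_1,\dots,\widehat E_{m-1})$, and — directly from the shape of $\chi_n$, since $\chi_n(D)_l$ involves only $D_1,\dots,D_l$ — with each $\widehat E^{[n]}_l$ for $l<m$ a noncommutative polynomial in $\widehat E_1,\dots,\widehat E_{m-1}$. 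Since $\widehat E_1,\dots,\widehat E_{m-1}=\widetilde D_1,\dots,\widetilde D_{m-1}$ all preserve $I$, and preservation of $I$ is stable under sums and compositions of $k$-linear endomorphisms, this gives $\widehat E^{[n]}_l(I)\subseteq I$ for $l<m$ and $\widehat E^{[n]}_m=\widehat E_m+\eta$ with $\eta(I)\subseteq I$.

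Then I would set $H:=(\widehat E^{[n]})^{-1}$ and $F:=H\circ\widehat E$ in $\operatorname{HS}^m(T)$. Running the recursion $H_l=-\widehat E^{[n]}_l-\sum_{0<j<l}\widehat E^{[n]}_j\circ H_{l-j}$ by induction on $l$ yields $H_l(I)\subseteq I$ for $l<m$, $H_1=0$, and $H_m=-\widehat E_m+(\text{endomorphism preserving }I)$. Consequently $F_1=H_0\circ\widehat E_1+H_1\circ\widehat E_0=\widehat E_1$, which reduces mod $I$ to $\delta$; each $F_l$ with $l<m$ preserves $I$ because all the constituent maps do; and for $x\in I$,
\[
F_m(x)=H_m(x)+\widehat E_m(x)+\sum_{0<j<m}H_j(\widehat E_{m-j}(x))\equiv -\widehat E_m(x)+\widehat E_m(x)=0\pmod{I},
\]
since $\widehat E_{m-j}(x)\in I$ for $0<j<m$ and each $H_j$ preserves $I$. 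Hence $F$ descends to $\overline F\in\operatorname{HS}^m(R)$ with $\overline F_1=\delta$, so $\delta\in\operatorname{Der}^m_k(R)$, as desired.

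The main obstacle — and the genuinely new ingredient — is the construction of the twist together with its two defining properties: that $\chi_n$ annihilates the degree-one component exactly when $n\ge 2$ (because $1+\zeta_n+\dots+\zeta_n^{n-1}=0$), and that iterating it restores the top term $\widehat E_m$ up to lower-degree noncommutative polynomials. This is precisely the content of Lemmas \ref{xi operator lem}–\ref{xi operator lem 3}, and it is where the hypotheses $p\nmid n$ (so that $n$ is invertible in $k$ and $k$ contains a primitive $n$-th root of unity) and $k$ algebraically closed are used. Granting those lemmas, the remaining work — that $H\circ\widehat E$ descends, i.e. that $\widehat E_m$ cancels modulo $I$ while no dependence on $\widehat E_m$ leaks into the lower-degree terms — is a routine induction on the group-inverse and composition formulas, the leak-freeness being exactly the observation that $\chi_n(D)_l$ only involves $D_1,\dots,D_l$.
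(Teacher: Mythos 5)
Your proposal is correct and follows essentially the same route as the paper's proof: lift $D$ to $\operatorname{HS}^{m}(T)$, apply the iterated $\chi_n$ operator of Lemma \ref{xi operator lem 3} to kill the degree-one term while recovering the top term up to lower-order noncommutative polynomials, and then compose with the inverse so that $E_m$ cancels and the result preserves $I$. The paper states the $I$-preservation and cancellation more tersely, whereas you spell out the recursion for the inverse and the exact mod-$I$ bookkeeping, but the key lemma invoked and the structure of the argument are identical.
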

\begin{proof}
Let us assume $m=p^in,i\ge0,n\ge2$ where $p\nmid n$. It suffices to show that $\operatorname{Der}_k^m(R)=\operatorname{Der}_k^{m-1}(R)$. Let us take $D\in \operatorname{HS}_k^{m-1}(R)$. We would like to show that $D_1\in \operatorname{Der}_k^m(R)$. Let $T$ be a localization of a polynomial algebra over $k$ and $I\subset T$ an ideal such that $T/I=R.$ Then, $D$ lifts to $E'\in \operatorname{HS}_k^{m-1}(T)$ and $E'$ lifts to $E\in \operatorname{HS}_k^m(T)$. Let us apply Lemma \ref{xi operator lem 3} to $E$ and let $E^{[n]}$ denote the resulting element. We have $E^{[n]}_l=0$ for every $1\le l < m$ satisfying $n\nmid l$, in particular $E^{[n]}_1=0$, and we have $$E^{[n]}_m=E_m+ \mathrm{( noncommutative\ polynomial\ in\ } E_1,\dots, E_{m-1}).$$
Let us define $G:=(E^{[n]})^{-1}\circ E$. We have $G_1=E_1$ and for every $1\le l\le m$, $G_l$ is a noncommutative polynomial in $E_1,\dots, E_{m-1}$ (the term $E_m$ cancels out in $G_m$). In particular, $G_l(I)\subset I$ for every $l$ and $G$ induces $\overline{G}\in \operatorname{HS}_k^m(R)$, so that $\overline{G}_1=D_1\in \operatorname{Der}_k^m(R)$.
\end{proof}
\begin{remark}
    The result of Lemma \ref{xi operator lem 3} is superfluous in the above proof as it uses only that $E^{[n]}_1=0$. We are going to use Lemma \ref{xi operator lem 3} in an essential way in Proposition \ref{np=p}.
\end{remark}

Let us recall the {\em first cotangent module} $T^1_{R/k}$ of a $k$-algebra $R$ essentially of finite type (cf. \cite[Definition 1.1.6]{Ser}). Let $T$ be a localization of a polynomial algebra over $k$ and $I\subset T$ an ideal such that $T/I=R.$ By \cite[Corollary 1.1.8]{Ser}, we have the following canonical exact sequence of $R$-modules

$$\operatorname{Hom}_R(\Omega_{T/k}\otimes_TR,R)\to \operatorname{Hom}_R(I/I^2,R)\to T^1_{R/k}\to 0.$$

\begin{definition}\label{obstruction sp M,Ob}
    Let $R$ be a $k$-algebra essentially of finite type and $m\ge 2 $ an integer. We define $$\operatorname{ob}_m:\operatorname{HS}_k^{m-1}(R)\to T^1_{R/k}$$
    as follows: Let $T$ be a localization of a polynomial algebra over $k$ and $I\subset T$ an ideal such that $T/I=R.$ For $D\in \operatorname{HS}_k^{m-1}(R)$, let us take its lift $E\in \operatorname{HS}_k^m(T)$ as in the proof of Theorem \ref{p^n leap}. Then, $E$ induces a $k$-algebra homomorphism $$\varphi_E :T\to R[\varepsilon]/\varepsilon^{m+1}.$$ We have $\varphi_E(I)\subset\varepsilon^m\cdot R[\varepsilon]/\varepsilon^{m+1}$ and $ \varphi_E(I^2)=0$ so that $\varphi_E$ induces $$f_E:I/I^2\to \varepsilon^m\cdot R[\varepsilon]/\varepsilon^{m+1}\simeq R.$$
 We define $\operatorname{ob}_m(D)$ as the image of $f_E$ under $\operatorname{Hom}_R(I/I^2,R)\to T^1_{R/k}.$ 
\end{definition}

\begin{proposition}\label{obstruction independence} In the situation of Definition \ref{obstruction sp M,Ob}, the map $\operatorname{ob}_m$ is a group homomorphism independent of the choice of $T, I, E$. A Hasse-Schmidt derivation $D\in \operatorname{HS}_k^{m-1}(R)$ extends to an element in $\operatorname{HS}_k^{m}(R)$ if and only if $\operatorname{ob}_m(D)=0$. For every $D\in \operatorname{HS}_k^{m-1}(R)$ and $x\in R$, we have $$\operatorname{ob}_m(x\bullet D)=x^m\cdot \operatorname{ob}_m(D).$$
\end{proposition}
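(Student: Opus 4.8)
The plan is to verify the three assertions of Proposition \ref{obstruction independence} in order: well-definedness, the group-homomorphism property, and the scaling formula, with the extension criterion following almost for free from the construction.

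\textbf{Independence of the choices.} First I would fix $T$ and $I$ and check independence of the lift $E$. Given two lifts $E, E' \in \operatorname{HS}^m(T)$ of the same $D \in \operatorname{HS}^{m-1}(R)$, the difference $E_m - E'_m : T \to R$ (after reduction mod $I$) is a $k$-derivation, because in degree $m$ the Leibniz-type relations for $E$ and $E'$ involve only $D_1,\dots,D_{m-1}$ in lower degrees, which agree; hence the two relations subtract to the derivation identity for $E_m - E'_m$. This derivation $T \to R$ factors through $\Omega_{T/k}\otimes_T R$, so the corresponding elements $f_E, f_{E'} \in \operatorname{Hom}_R(I/I^2, R)$ differ by the image of an element of $\operatorname{Hom}_R(\Omega_{T/k}\otimes_T R, R)$, and therefore have the same image in $T^1_{R/k}$ by the defining exact sequence recalled just before Definition \ref{obstruction sp M,Ob}. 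For independence of $(T,I)$, I would use the standard argument (as in \cite[Lemma 1.1.7]{Ser}): given two presentations, compare both to a common refinement $T'' = T \otimes_k T'$ (or one surjecting onto both), lift $D$ compatibly, and use that $T^1_{R/k}$ is intrinsic together with the compatibility of the construction under the transition maps; the degree-$m$ piece transforms exactly as the first cotangent module requires.

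\textbf{Group homomorphism.} For $D, D' \in \operatorname{HS}^{m-1}(R)$ with lifts $E, E'$, the composite $E \circ E'$ is a lift of $D \circ D'$, and in degree $m$ we have $(E\circ E')_m = E_m + E'_m + (\text{terms in } E_1,\dots,E_{m-1}, E'_1,\dots,E'_{m-1})$, where the cross terms involve only the common data $D_i, D'_i$ for $i<m$. Reducing mod $I$ and passing to $I/I^2$, the cross terms contribute something that factors through $\Omega_{T/k}$ (again being built from derivations composed appropriately), hence die in $T^1_{R/k}$; so $\operatorname{ob}_m(D\circ D') = \operatorname{ob}_m(D) + \operatorname{ob}_m(D')$. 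I would want to be slightly careful here: rather than analyzing the cross terms by hand, the cleaner route is to note that $f_{E\circ E'} - f_E - f_{E'} \in \operatorname{Hom}_R(I/I^2,R)$ lies in the image of $\operatorname{Hom}_R(\Omega_{T/k}\otimes_T R, R)$, which can be seen because $E \circ E'$ and the ``sum'' lift agree to order $m-1$ and their order-$m$ discrepancy is a derivation — the same mechanism as in the independence argument.

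\textbf{Extension criterion and scaling.} The extension criterion is immediate from the construction: $\operatorname{ob}_m(D) = 0$ means $f_E$ lifts to $\operatorname{Hom}_R(\Omega_{T/k}\otimes_T R, R)$, i.e., there is a derivation correcting $E_m$ so that the corrected sequence carries $I$ into $I^2 \cdot R[\varepsilon]/\varepsilon^{m+1} = 0$ in degree $m$; that is, the corrected $E$ descends to $\operatorname{HS}^m(R)$ extending $D$. Conversely any extension of $D$ to $\operatorname{HS}^m(R)$ pulls back to such a correction. For the scaling formula, observe that if $E$ lifts $D$ then $x \bullet E$ lifts $x \bullet D$ (the operation $x\bullet$ commutes with truncation by Definition \ref{Hasse Schmidt}), and $(x\bullet E)_m = x^m E_m$; tracing through $\varphi_{x\bullet E}$ and $f_{x\bullet E} = x^m f_E$ gives $\operatorname{ob}_m(x\bullet D) = x^m \operatorname{ob}_m(D)$. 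The main obstacle, I expect, is the independence-of-presentation part: making rigorous that the order-$m$ component of a Hasse-Schmidt lift transforms under change of $(T,I)$ exactly the way an element of $\operatorname{Hom}_R(I/I^2,R)$ does modulo the image of $\operatorname{Hom}_R(\Omega_{T/k}\otimes_T R,R)$ — everything else is a routine degree-$m$ bookkeeping of the Leibniz relations.
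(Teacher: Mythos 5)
Your proposal is essentially the same approach as the paper's: lift to $\operatorname{HS}^m(T)$, restrict the $m$-th component to $I/I^2$, and track the resulting class in $T^1_{R/k}$. One point to tighten: in the group-homomorphism step, the cross terms $E_j E'_k$ ($j,k\ge 1$) do not merely ``factor through $\Omega_{T/k}$'' — they actually vanish after reducing mod $I$, because $E'_k(I)\subset I$ for $k<m$ (as $E'$ lifts $D'$) and then $E_j(I)\subset I$ for $j<m$ as well. Hence $f_{E\circ E'}=f_E+f_{E'}$ holds exactly in $\operatorname{Hom}_R(I/I^2,R)$, not merely modulo the image of $\operatorname{Hom}_R(\Omega_{T/k}\otimes_T R,R)$; the paper makes this transparent by working in $(T[\varepsilon]/\varepsilon^{m+1})/J$ with $J=(\varepsilon I)$, where the induced endomorphism is the identity on the $\varepsilon^m R$ summand. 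For independence of the presentation, the paper chooses a direct comparison map $\psi:T'\to T$ (available since both are formally smooth over $k$) rather than passing to a common refinement $T\otimes_k T'$; your route works too but is slightly heavier. The extension criterion and the scaling formula are handled the same way in both.
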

\begin{proof}First, let us show that $\operatorname{ob}_m$ is a group homomorphism. Let $D^1,D^2\in \operatorname{HS}_k^{m-1}(R)$ be Hasse-Schmidt derivations and let $E^1,E^2\in \operatorname{HS}_k^m(T)$ be their lifts. Let $J\subset T[\varepsilon]/\varepsilon^{m+1}$ be the ideal generated by $\varepsilon\cdot I$. Note that $$(T[\varepsilon]/\varepsilon^{m+1})/J=T\oplus \varepsilon R\oplus \dots \oplus \varepsilon^m R.$$ Let $$\psi_1,\psi_2:T[\varepsilon]/\varepsilon^{m+1}\rightrightarrows T[\varepsilon]/\varepsilon^{m+1}$$ be the induced morphisms from $E^1,E^2$. Note that $\psi_1(I),\psi_2(I)\subset (I,\varepsilon^m)$ because they are lifts of $R[\varepsilon]/\varepsilon^m\rightrightarrows R[\varepsilon]/\varepsilon^m$. Thus, $\psi_1(J),\psi_2(J)\subset J$ and there are induced morphisms $$\overline{\psi}_1,\overline{\psi}_2: (T[\varepsilon]/\varepsilon^{m+1})/J\rightrightarrows (T[\varepsilon]/\varepsilon^{m+1})/J$$ from $E^1,E^2$. For $a\in I,$ we have equalities in $(T[\varepsilon]/\varepsilon^{m+1})/J$:
    \begin{align*}
    \varepsilon^m\cdot f_{E^1\circ E^2}(a+I^2)&=(\overline{\psi}_1\circ \overline{\psi}_2)(a)-a\\
    &=\overline{\psi}_1(a+\varepsilon^m\cdot f_{E^2}(a+I^2))-a\\
    &=a+\varepsilon^m\cdot f_{E^1}(a+I^2)+\varepsilon^m\cdot f_{E^2}(a+I^2)-a\\
    &=\varepsilon^m\cdot (f_{E^1}+ f_{E^2})(a+I^2). \end{align*}
This implies $f_{E^1\circ E^2}=f_{E^1}+ f_{E^2}$ and $\operatorname{ob}_m(D^1\circ D^2)=\operatorname{ob}_m(D^1)+\operatorname{ob}_m(D^2)$, so $\operatorname{ob}_m$ is a group homomorphism.
Now, let us show that the element $\operatorname{ob}_m(D)$ does not depend on the choice of $T,I$ nor $E$. First, fix $T$ and $I$ and let $F\in \operatorname{HS}_k^m(T)$ be another lift of $D$. Then, $E$ and $F$ induce $k$-algebra homomorphisms 
$$\varphi_E,\varphi_F :T\rightrightarrows R[\varepsilon]/\varepsilon^{m+1}$$ 
such that $\varphi_E=\varphi_F$ modulo $\varepsilon^m.$  Hence, $\varphi_E-\varphi_F$ is a $k$-derivation from $T$ to $\varepsilon^m\cdot R[\varepsilon]/\varepsilon^{m+1}$. This means that $f_E=f_F$ in $T^1_{R/k}.$ A similar argument also shows that $D$ extends to an element of $\operatorname{HS}_k^{m}(R)$ if and only if $\operatorname{ob}_m(D)=0$. Now, let $T',I'$ be another choice of $T,I$. Then, there exists $\psi: T'\to T$ that induces $T'/I'\simeq T/I$. Note that $\varphi_E\circ \psi$ is induced from some $E'\in \operatorname{HS}_k^m(T')$ (by an argument similar to that of Lemma \ref{smoothness}.) Note that $\psi$ induces $\widetilde{\psi}
:I'/I'^2\to I/I^2$. We have to show that $f_E$ and $f_E\circ \widetilde{\psi}$ induce the same element in $T^1_{R/k}$, and this follows from the commutative diagram 
\[\begin{tikzcd}
	{\operatorname{Hom}_R(I/I^2,R)} & {T^1_{R/k}} \\
	{\operatorname{Hom}_R(I'/I'^2,R)} & {T^1_{R/k}}
	\arrow[from=1-1, to=1-2]
	\arrow["{\widetilde{\psi}^{\sharp}}"', from=1-1, to=2-1]
	\arrow[equals, from=1-2, to=2-2]
	\arrow[from=2-1, to=2-2].
\end{tikzcd}\]
Finally, the equality $\operatorname{ob}_m(x\bullet D)=x^m\cdot \operatorname{ob}_m(D)$ follows from the construction.
\end{proof}

\begin{definition}\label{Ob}
  Let $R$ be a $k$-algebra essentially of finite type and $m\ge 2 $ an integer.  We define the $m$-th \emph{obstruction module} $\operatorname{Ob}^m_R\subset T^1_{R/k}$ as the image of $\operatorname{ob}_m$. At this point, this is an abelian group.
\end{definition}

\begin{example}
    If $R$ is formally smooth, then we have $\operatorname{Ob}_R^m=0$ for every $m\ge 2$ by Lemma \ref{n m lift}.
\end{example}

\begin{proposition}\label{Ob module}
In the situation of Definition \ref{obstruction sp M,Ob}, let us assume $m=p^i, i\ge1$. Then, $\operatorname{Ob}^{p^i}_R$ is a finitely generated $R^{p^i}$-module.  If $S\subset R$ is a multiplicatively closed subset, then $(S^{p^i})^{-1}\operatorname{Ob}_R^{p^i}=\operatorname{Ob}^{p^i}_{S^{-1}R}$. 
\end{proposition}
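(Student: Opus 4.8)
The plan is to prove the three assertions in turn: that $\operatorname{Ob}^{p^i}_R$ is an $R^{p^i}$-submodule of $T^1_{R/k}$, that it is finitely generated over $R^{p^i}$, and that it is compatible with localization. For the first point, recall from Proposition \ref{obstruction independence} that $\operatorname{ob}_{p^i}\colon\operatorname{HS}^{p^i-1}(R)\to T^1_{R/k}$ is a group homomorphism satisfying $\operatorname{ob}_{p^i}(x\bullet D)=x^{p^i}\cdot\operatorname{ob}_{p^i}(D)$ for all $x\in R$. Since $R^{p^i}$ is exactly the set $\{x^{p^i}:x\in R\}$ of $p^i$-th powers, the image $\operatorname{Ob}^{p^i}_R=\operatorname{im}(\operatorname{ob}_{p^i})$ is closed under addition and under multiplication by every element of $R^{p^i}$; viewing $T^1_{R/k}$ as an $R^{p^i}$-module by restriction of scalars, this exhibits $\operatorname{Ob}^{p^i}_R$ as an $R^{p^i}$-submodule.

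For finite generation I would use that $k$ is perfect, hence that $R$, being essentially of finite type over $k$, is $F$-finite: $R$ is a finitely generated module over its subring $R^{p^i}$. One checks this for a polynomial algebra, where the monomials whose exponent in each variable is $<p^i$ form a basis over the subring of $p^i$-th powers (using $k^{p^i}=k$), and then observes that module-finiteness over the $p^i$-th power subring is inherited by quotients and localizations. From the exact sequence defining $T^1_{R/k}$ and the Noetherianity of $R$, the module $T^1_{R/k}$ is finitely generated over $R$, hence finitely generated over $R^{p^i}$. Moreover $R^{p^i}$ is Noetherian, being the image of the iterated Frobenius homomorphism $R\to R$, $r\mapsto r^{p^i}$, and so a quotient of $R$. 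A submodule of a finitely generated module over a Noetherian ring is finitely generated, which gives the claim.

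For the localization statement, note first that $(S^{-1}R)^{p^i}=(S^{p^i})^{-1}R^{p^i}$, and that localizing any $R^{p^i}$-module at the multiplicative set $S^{p^i}$ has the same effect as inverting $S$, since $s\mid s^{p^i}$ and since $s^{p^i}$ annihilates an element whenever $s$ does. The construction of $\operatorname{ob}_{p^i}$ in Definition \ref{obstruction sp M,Ob} is compatible with the canonical maps $\operatorname{HS}^{p^i-1}(R)\to\operatorname{HS}^{p^i-1}(S^{-1}R)$ and $T^1_{R/k}\to T^1_{S^{-1}R/k}=S^{-1}T^1_{R/k}$: choosing a presentation $T/I=R$ with $T$ a localization of a polynomial algebra, one may use $S'^{-1}T/S'^{-1}I=S^{-1}R$ with $S'$ the preimage of $S$, and the auxiliary lift $E$ localizes accordingly. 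This yields a natural map $(S^{p^i})^{-1}\operatorname{Ob}^{p^i}_R\to\operatorname{Ob}^{p^i}_{S^{-1}R}$, injective by exactness of localization. For surjectivity I would adapt the denominator-clearing argument in the proof of Theorem \ref{localization}: given $D\in\operatorname{HS}^{p^i-1}(S^{-1}R)$, lift it to some $\widetilde D\in\operatorname{HS}^{p^i}(S^{-1}T)$; there is $s\in S'$ such that $s\bullet\widetilde D$ is induced from an element of $\operatorname{HS}^{p^i}(T)$ which moreover carries $I$ into $I$, and reducing mod $I$ produces $E_0\in\operatorname{HS}^{p^i}(R)$ with $\operatorname{ob}_{p^i}(\tau E_0)=s^{p^i}\cdot\operatorname{ob}_{p^i}(D)$ in $S^{-1}T^1_{R/k}$ (the factor $s^{p^i}$ coming from the $\varepsilon^{p^i}$-component of the scaling $s\bullet$). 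Hence $\operatorname{ob}_{p^i}(D)\in(S^{p^i})^{-1}\operatorname{Ob}^{p^i}_R$, which finishes the argument.

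I expect the main obstacle to be the bookkeeping in this last part: making precise the identification between localizing at $S$ and at $S^{p^i}$, and correctly tracking the $p^i$-th powers of the denominators through the construction of $\operatorname{ob}_{p^i}$, which — unlike the module $\operatorname{Der}^m_k$ — involves both the auxiliary Hasse–Schmidt lift over $T$ and the first cotangent module.
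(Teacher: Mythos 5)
Your treatment of the $R^{p^i}$-module structure (via $\operatorname{ob}_{p^i}(x\bullet D)=x^{p^i}\operatorname{ob}_{p^i}(D)$) and of finite generation (via $F$-finiteness of $R$ over $R^{p^i}$ and Noetherianity of $R^{p^i}$) matches the paper and is correct, as is the injectivity half of the localization claim. The gap is in the surjectivity argument: you lift $D\in\operatorname{HS}^{p^i-1}(S^{-1}R)$ to $\widetilde D\in\operatorname{HS}^{p^i}(S^{-1}T)$ and then clear denominators to an element of $\operatorname{HS}^{p^i}(T)$ that, as you write, ``carries $I$ into $I$,'' from which you extract $E_0\in\operatorname{HS}^{p^i}(R)$. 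But if $E_0\in\operatorname{HS}^{p^i}(R)$ actually exists, then by Proposition \ref{obstruction independence} it is a lift of its truncation $\tau E_0$ and hence $\operatorname{ob}_{p^i}(\tau E_0)=0$; your asserted identity $\operatorname{ob}_{p^i}(\tau E_0)=s^{p^i}\operatorname{ob}_{p^i}(D)$ would then force $\operatorname{ob}_{p^i}(D)=0$, which cannot be assumed. The source of the error is that clearing denominators can arrange $F_l(I)\subset I$ only for $l<p^i$; in degree $p^i$ this may fail, and this failure is exactly the obstruction you are trying to compute.

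The fix is small, and it is what the paper does: clear denominators at length $p^i-1$, not $p^i$. As in the proof of Theorem \ref{localization}, for any $D\in\operatorname{HS}^{p^i-1}(S^{-1}R)$ there is $s\in S$ and $E\in\operatorname{HS}^{p^i-1}(R)$ whose image under $u:\operatorname{HS}^{p^i-1}(R)\to\operatorname{HS}^{p^i-1}(S^{-1}R)$ is $s\bullet D$. The commutative square relating $\operatorname{ob}_{p^i}$ over $R$ and over $S^{-1}R$, together with $\operatorname{ob}_{p^i}(s\bullet D)=s^{p^i}\operatorname{ob}_{p^i}(D)$, then gives $\operatorname{ob}_{p^i}(D)=(s^{p^i})^{-1}\operatorname{ob}_{p^i}(E)$ in $T^1_{S^{-1}R/k}$, which lies in $(S^{p^i})^{-1}\operatorname{Ob}^{p^i}_R$. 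With that correction the rest of your argument is sound and coincides with the paper's.
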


\begin{proof}
    As $\operatorname{ob}_{p^i}(x\bullet D)=x^{p^i}\cdot \operatorname{ob}_m(D)$, $\operatorname{Ob}_R^{p^i}$ is a $R^{p^i}$-module. It is finitely generated because $R$ is finite over $R^{p^i}$ (as $k$ is perfect) and $\operatorname{Ob}^{p^i}_R$ is a sub-$R^{p^i}$-module of $T^1_{R/k}$. Note that we have the following commutative diagram
\[\begin{tikzcd}
	{\operatorname{HS}_k^{p^i-1}(R)} & {T^1_{R/k}} \\
	{\operatorname{HS}_k^{p^i-1}(S^{-1}R)} & {T^1_{S^{-1}R/k}}
	\arrow["{\operatorname{ob}_{p^i}}"', from=1-1, to=1-2]
	\arrow["u", from=1-1, to=2-1]
	\arrow[from=1-2, to=2-2]
	\arrow["{\operatorname{ob}_{p^i}}", from=2-1, to=2-2]
\end{tikzcd}\]
and this induces $(S^{p^i})^{-1}\operatorname{Ob}_R^{p^i}\to \operatorname{Ob}_{S^{-1}R}^{p^i}$, which is injective because
$$(S^{p^i})^{-1}T^1_{R/k}\simeq S^{-1}T^1_{R/k}\to T^1_{S^{-1}R/k}$$ is an isomorphism. For every $D\in \operatorname{HS}_k^{p^i-1}(S^{-1}R)$, there exists $s\in S$ such that there exists $E\in \operatorname{HS}_k^{p^i-1}(R)$ with $u(E)=s\bullet D$, as in the proof of Theorem \ref{localization}. We have $$\operatorname{ob}_{p^i}(D)=(s^{p^i})^{-1}\cdot \operatorname{ob}_{p^i}(E)$$ in $\operatorname{Ob}_{S^{-1}R}^{p^i}$ and this shows surjectivity.
\end{proof}
\begin{lemma}
    \label{Ob module 2}
    In the situation of Definition \ref{obstruction sp M,Ob}, suppose that we have another integer $n\ge2$. Then, we have $\operatorname{ob}_{m}(D)=\operatorname{ob}_{mn}(D[n])$ for $D\in \operatorname{HS}_k^{m-1}(R)$ and in particular $\operatorname{Ob}^m_R\subset \operatorname{Ob}_R^{mn}.$
    Here, we think of $D[n]$ as an element of $\operatorname{HS}_k^{mn-1}(R)$ by defining $(D[n])_l=0$ for $l=mn-n+1,\ mn-n+2,\dots,\ mn-1$.
\end{lemma}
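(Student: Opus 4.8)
The plan is to compare the two obstruction maps directly through their defining construction, using the same presentation $T/I = R$ for both. Fix a localization $T$ of a polynomial algebra and an ideal $I \subset T$ with $T/I = R$. Given $D \in \operatorname{HS}^{m-1}(R)$, lift it (via Lemma \ref{smoothness}, after first lifting to $\operatorname{HS}^m(T)$ as in the proof of Theorem \ref{p^n leap}) to some $E \in \operatorname{HS}^m(T)$. First I would observe that the operation $[n]: \operatorname{HS}^m(T) \to \operatorname{HS}^{mn}(T)$, which inserts zeros in all degrees not divisible by $n$, commutes with the reduction mod $I$; that is, $E[n]$ is a lift of $D[n] \in \operatorname{HS}^{mn-1}(R)$ (interpreted as in the statement, with the top $n-1$ entries set to zero, which is consistent since $E[n]$ has zero in those degrees too). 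So $E[n]$ is a legitimate choice of lift to use in the definition of $\operatorname{ob}_{mn}(D[n])$.

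The key step is then to identify the two homomorphisms $f_E : I/I^2 \to R$ and $f_{E[n]} : I/I^2 \to R$. Concretely, $E$ induces $\varphi_E : T \to R[\varepsilon]/\varepsilon^{m+1}$, $x \mapsto \sum_{i=0}^m E_i(\bar x)\varepsilon^i$, and $E[n]$ induces $\varphi_{E[n]} : T \to R[\varepsilon']/\varepsilon'^{mn+1}$, $x \mapsto \sum_{j=0}^m E_j(\bar x)\varepsilon'^{\,jn}$. These are intertwined by the $k$-algebra map $R[\varepsilon]/\varepsilon^{m+1} \to R[\varepsilon']/\varepsilon'^{mn+1}$, $\varepsilon \mapsto \varepsilon'^{\,n}$: one has $\varphi_{E[n]}$ equal to this composite with $\varphi_E$. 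Since this map carries $\varepsilon^m \cdot R[\varepsilon]/\varepsilon^{m+1}$ isomorphically onto $\varepsilon'^{\,mn} \cdot R[\varepsilon']/\varepsilon'^{mn+1}$ (both being copies of $R$, and the map being the identity on coefficients), the induced map $f_{E[n]} : I/I^2 \to \varepsilon'^{\,mn} R[\varepsilon']/\varepsilon'^{mn+1} \simeq R$ agrees with $f_E : I/I^2 \to \varepsilon^m R[\varepsilon]/\varepsilon^{m+1} \simeq R$ under these canonical identifications with $R$. Hence $f_E$ and $f_{E[n]}$ represent the same class, and applying $\operatorname{Hom}_R(I/I^2, R) \to T^1_{R/k}$ gives $\operatorname{ob}_m(D) = \operatorname{ob}_{mn}(D[n])$.

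The inclusion $\operatorname{Ob}^m_R \subset \operatorname{Ob}^{mn}_R$ is then immediate: every element of $\operatorname{Ob}^m_R$ is $\operatorname{ob}_m(D)$ for some $D \in \operatorname{HS}^{m-1}(R)$, and by the equality just proved it equals $\operatorname{ob}_{mn}(D[n])$, which lies in the image of $\operatorname{ob}_{mn}$, namely $\operatorname{Ob}^{mn}_R$.

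I expect the only genuine subtlety to be bookkeeping around the truncation conventions — making sure that "$D[n]$ regarded in $\operatorname{HS}^{mn-1}(R)$" with the stipulated zeros in degrees $mn-n+1, \dots, mn-1$ is compatible with the lift $E[n] \in \operatorname{HS}^{mn}(T)$ (which automatically has zeros there and in fact also in degree $mn$ unless $n \mid mn$, which it does), and that the definition of $\operatorname{ob}_{mn}$ on $\operatorname{HS}^{mn-1}(R)$ only ever inspects the image of $I$ in degree $mn$ of $R[\varepsilon]/\varepsilon^{mn+1}$, which is exactly the degree-$m$ data of $E$ transported along $\varepsilon \mapsto \varepsilon'^{\,n}$. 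Everything else is a routine check that the two $k$-algebra homomorphisms out of $T$ are related by the evident substitution, and that this substitution is compatible with the isomorphisms $\varepsilon^m R[\varepsilon]/\varepsilon^{m+1} \simeq R \simeq \varepsilon'^{\,mn} R[\varepsilon']/\varepsilon'^{mn+1}$ used to land in $T^1_{R/k}$.
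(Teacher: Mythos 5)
Your proposal is correct and follows essentially the same route as the paper: lift $D$ to $E \in \operatorname{HS}^m(T)$, observe $E[n]$ is a lift of $D[n]$, and conclude $\operatorname{ob}_m(D) = \operatorname{ob}_{mn}(D[n])$ because the top-degree data $E_m = E[n]_{mn}$ controls the obstruction. The paper compresses this to a two-line observation; you have simply unpacked the substitution $\varepsilon \mapsto \varepsilon'^{\,n}$ that underlies the equality $f_E = f_{E[n]}$.
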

\begin{proof}
    If $E\in \operatorname{HS}_k^{m}(T)$ is a lift of $D$, then $E[n]\in \operatorname{HS}_k^{mn}(T)$ is a lift of $D[n]$. As $E_m=E[n]_{mn}$, we have $\operatorname{ob}_{m}(D)=\operatorname{ob}_{mn}(D[n])$.
\end{proof}

\begin{proposition}\label{np=p}
In the situation of Definition \ref{obstruction sp M,Ob}, let us assume $m=p^in,i\ge0,n\ge2$ where $p\nmid n$. Then, we have $\operatorname{Ob}^m_R=\operatorname{Ob}^{p^i}_R$.
\end{proposition}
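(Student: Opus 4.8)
The plan is to establish both inclusions $\operatorname{Ob}^m_R\subset \operatorname{Ob}^{p^i}_R$ and $\operatorname{Ob}^{p^i}_R\subset \operatorname{Ob}^m_R$, where $m=p^in$ with $p\nmid n$ and $n\ge 2$. The second inclusion is the easy one: since $p^i\mid m$ and $n\ge 2$, Lemma \ref{Ob module 2} (applied with the roles "$m$" $=p^i$ and "$n$" $=n$) gives $\operatorname{Ob}^{p^i}_R\subset \operatorname{Ob}^{p^i n}_R=\operatorname{Ob}^m_R$. So the real content is showing $\operatorname{Ob}^m_R\subset \operatorname{Ob}^{p^i}_R$, i.e. that any obstruction class arising at level $m$ already arises at level $p^i$.

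For this, I would mimic the proof of Theorem \ref{p^n leap}, now using the full strength of Lemma \ref{xi operator lem 3} rather than just the vanishing of the degree-$1$ component (as the Remark after that theorem anticipates). Take $D\in \operatorname{HS}^{m-1}(R)$; I want to show $\operatorname{ob}_m(D)\in \operatorname{Ob}^{p^i}_R$. Choose a presentation $T/I=R$ with $T$ a localization of a polynomial algebra, lift $D$ successively to $E'\in \operatorname{HS}^{m-1}(T)$ and then to $E\in \operatorname{HS}^m(T)$ (using formal smoothness, Lemmas \ref{n m lift}, \ref{smoothness}). Apply Lemma \ref{xi operator lem 3} to $E$ to get $E^{[n]}\in \operatorname{HS}^m(T)$ with $E^{[n]}_l=0$ whenever $n\nmid l$ (in particular for $l=1,\dots$ not divisible by $n$), and with $E^{[n]}_m=E_m+(\text{noncommutative polynomial in }E_1,\dots,E_{m-1})$. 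Since $E^{[n]}$ is concentrated (below degree $m$) in degrees divisible by $n$, it is of the form $F[n]$ for some $F\in \operatorname{HS}^{p^i}(T)$, namely $F_j=E^{[n]}_{jn}$ for $0\le j\le p^i$; here I use that $m/n=p^i$. Hence $E^{[n]}_m=F_{p^i}$.

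Now compare obstructions. On one hand, by Lemma \ref{Ob module 2}, $\operatorname{ob}_{p^i}(\bar F)=\operatorname{ob}_m(\bar F[n])=\operatorname{ob}_m(\overline{E^{[n]}})$ provided $\overline{E^{[n]}}$ actually defines an element of $\operatorname{HS}^{m-1}(R)$ — which requires $E^{[n]}_l(I)\subset I$ for $l<m$. But the lower components $E^{[n]}_l$ for $l<m$ are noncommutative polynomials in $E_1,\dots,E_{l}$ (the iteration of $\chi_n$ only mixes lower-order terms), and $E$ was a genuine lift of $E'\in\operatorname{HS}^{m-1}(T)$, so $E_l(I)\subset I$ for $l\le m-1$; thus all $E^{[n]}_l$ with $l\le m-1$ preserve $I$, and $\bar F=(\overline{E^{[n]}_{jn}})_{j=0}^{p^i-1}\in\operatorname{HS}^{p^i-1}(R)$ is well defined. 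On the other hand, exactly as in the proof of Theorem \ref{p^n leap}, set $G:=(E^{[n]})^{-1}\circ E\in\operatorname{HS}^m(T)$; since $E^{[n]}_m$ and $E_m$ agree modulo a polynomial in the lower $E_j$'s, the top term cancels and every $G_l$ is a noncommutative polynomial in $E_1,\dots,E_{m-1}$, so $G_l(I)\subset I$ and $G$ descends to $\bar G\in\operatorname{HS}^m(R)$, a lift of $D$; hence $\operatorname{ob}_m(D)=0$ computed via $\bar G$... no — more carefully: $E=E^{[n]}\circ G$, so $\operatorname{ob}_m(D)=\operatorname{ob}_m(\overline{E}\bmod\varepsilon^m\text{-data})$, and by additivity of the $f_E$-construction in Proposition \ref{obstruction independence}, $f_E=f_{E^{[n]}}+f_G$ where $f_G$ vanishes because $G$ descends to $R$. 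Therefore $\operatorname{ob}_m(D)=\operatorname{ob}_m(\overline{E^{[n]}})=\operatorname{ob}_{p^i}(\bar F)\in\operatorname{Ob}^{p^i}_R$, as desired.

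The main obstacle I expect is bookkeeping rather than conceptual: verifying precisely that all the intermediate components $E^{[n]}_l$ ($l<m$) and $G_l$ ($l\le m$) preserve $I$, so that the descents $\bar F$ and $\bar G$ genuinely exist, and that the additivity identity $f_E=f_{E^{[n]}}+f_G$ from Proposition \ref{obstruction independence} applies in the truncated setting (one must note $E$, $E^{[n]}$, $G$ all induce the same map mod $\varepsilon^m$ after passing to $R$, or rather track which of them descend). Once the identity $\operatorname{ob}_m(D)=\operatorname{ob}_{p^i}(\bar F)$ is in hand, combining with Lemma \ref{Ob module 2} for the reverse inclusion finishes the proof.
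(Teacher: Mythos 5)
Your proof is correct and follows essentially the same route as the paper: apply Lemma \ref{xi operator lem 3} to a lift $E\in\operatorname{HS}^m(T)$ of $D$, note that the resulting $E^{[n]}$ is supported in degrees divisible by $n$ (below $m$) with those components preserving $I$, so it descends to $\bar F[n]$ for some $\bar F\in\operatorname{HS}^{p^i-1}(R)$, and identify $\operatorname{ob}_m(D)$ with $\operatorname{ob}_{p^i}(\bar F)$; the reverse inclusion is Lemma \ref{Ob module 2}. The paper deduces $\operatorname{ob}_m(D)=\operatorname{ob}_{p^i}(\bar F)$ directly from the identity $E^{[n]}_m=E_m+(\text{polynomial in }E_1,\dots,E_{m-1})$, whereas you take a small detour through $G=(E^{[n]})^{-1}\circ E$ and the additivity $f_E=f_{E^{[n]}}+f_G$, but this is an inessential variation of the same argument.
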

\begin{proof} Let $D\in \operatorname{HS}_k^{m-1}(R)$ and let us take $T$ and $E\in \operatorname{HS}_k^m(T)$ as in Definition \ref{obstruction sp M,Ob}.
Let us apply Lemma \ref{xi operator lem 3} to $E$ and let $E^{[n]}$ denote the resulting element. We have $E^{[n]}_l=0$ for every $1\le l < m$ satisfying $n\nmid l$,  and we have $$E^{[n]}_m=E_m+ \mathrm{(noncommutative\ polynomial\ in\ } E_1,\dots, E_{m-1}).$$This means that $E^{[n]}$ induces $D'\in \operatorname{HS}_k^{p^i-1}(R)$ such that $\operatorname{ob}_m(D)=\operatorname{ob}_{p^i}(D')$, and this proves that $\operatorname{Ob}_R^m\subset \operatorname{Ob}_R^{p^i}.$ We have shown the opposite inclusion in Lemma \ref{Ob module 2}.
\end{proof}

By Propositions \ref{Ob module} and \ref{np=p}, we obtain:

\begin{theorem}\label{Ob sheaf}
    Let $X$ be a scheme of finite type over $k$. For each $m\ge 1$, there exists a subsheaf
    $$\operatorname{Ob}_X^{m}\subset T_X^1$$
    such that for every affine open subscheme $\operatorname{Spec}R\subset X,$ we have
    $$\operatorname{Ob}_X^{m}(\operatorname{Spec}R)=\operatorname{Ob}_R^m\subset T_R^1.$$ It is a sub-$\mathcal{O}_X^{p^i}$-module of $T_X^1$, where $i$ is the largest integer such that $p^i$ divides $m$. It is also a coherent $\mathcal{O}_X$-module via $\mathcal{O}_X\to \mathcal{O}_X^{p^i}$.
    
\end{theorem}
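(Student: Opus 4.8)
The plan is to construct the sheaf $\operatorname{Ob}_X^m$ by gluing the modules $\operatorname{Ob}_R^m$ over an affine open cover, using Propositions \ref{Ob module} and \ref{np=p} to check the gluing data is consistent on overlaps. Write $i$ for the largest integer with $p^i \mid m$. First I would invoke Proposition \ref{np=p} to reduce to the case $m = p^i$: since $\operatorname{Ob}_R^m = \operatorname{Ob}_R^{p^i}$ as subsets of $T^1_{R/k}$ for every affine open $\operatorname{Spec} R$, it suffices to build the sheaf for $m = p^i$ and then simply observe that the same subsheaf of $T^1_X$ works for $m$. For $m = p^i$, Proposition \ref{Ob module} tells us that $\operatorname{Ob}_R^{p^i}$ is a finitely generated $R^{p^i}$-module and that it is compatible with localization in the precise sense $(S^{p^i})^{-1}\operatorname{Ob}_R^{p^i} = \operatorname{Ob}_{S^{-1}R}^{p^i}$ inside $(S^{p^i})^{-1}T^1_{R/k} \cong T^1_{S^{-1}R/k}$.

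Next I would set up the gluing. Fix an affine open cover $\{U_\lambda = \operatorname{Spec} R_\lambda\}$ of $X$; on each $U_\lambda$ we have the submodule $\operatorname{Ob}_{R_\lambda}^{p^i} \subset T^1_{R_\lambda/k} = T^1_X(U_\lambda)$. On a basic open $\operatorname{Spec}(R_\lambda)_f \subset U_\lambda \cap U_\mu$, both $\operatorname{Ob}_{R_\lambda}^{p^i}$ and $\operatorname{Ob}_{R_\mu}^{p^i}$ restrict (i.e.\ localize) to $\operatorname{Ob}_{(R_\lambda)_f}^{p^i}$ by the localization statement of Proposition \ref{Ob module}, and these agree as subsets of $T^1_X((\operatorname{Spec} R_\lambda)_f)$ because the construction of $\operatorname{ob}_{p^i}$ (Definition \ref{obstruction sp M,Ob}, Proposition \ref{obstruction independence}) is intrinsic — independent of the presentation $T/I$ — so $\operatorname{Ob}^{p^i}$ depends only on the ring, not on which chart it came from. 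Since the $\operatorname{Ob}^{p^i}$ of a ring is a submodule of $T^1$ and localization is exact, these local submodules patch to a subsheaf $\operatorname{Ob}_X^{p^i} \subset T^1_X$; concretely, one can define $\operatorname{Ob}_X^{p^i}$ as the subsheaf whose sections over an open $V$ are the sections $s$ of $T^1_X$ such that $s|_{\operatorname{Spec} R} \in \operatorname{Ob}_R^{p^i}$ for every affine open $\operatorname{Spec} R \subset V$, and then the localization compatibility guarantees that checking on one cover suffices and that $\operatorname{Ob}_X^{p^i}(\operatorname{Spec} R) = \operatorname{Ob}_R^{p^i}$ for \emph{every} affine open, not just those in the chosen cover.

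For the module structure: the relative Frobenius gives $\mathcal{O}_X \to \mathcal{O}_X^{p^i}$ (the subsheaf of $p^i$-th powers, which makes sense as $X$ is of finite type over the perfect field $k$), and locally $\operatorname{Ob}_R^{p^i}$ is an $R^{p^i}$-module by Proposition \ref{Ob module}; these local $\mathcal{O}^{p^i}$-module structures are compatible with restriction because the identity $\operatorname{ob}_{p^i}(x \bullet D) = x^{p^i}\operatorname{ob}_{p^i}(D)$ of Proposition \ref{obstruction independence} is stable under localization, so they glue to an $\mathcal{O}_X^{p^i}$-module structure on $\operatorname{Ob}_X^{p^i}$. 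Finite generation over $R^{p^i}$ on each affine, together with $R$ being a finite $R^{p^i}$-module (perfectness of $k$), gives coherence of $\operatorname{Ob}_X^{p^i}$ as an $\mathcal{O}_X$-module via the finite morphism $\mathcal{O}_X \to \mathcal{O}_X^{p^i}$. The main obstacle I anticipate is the bookkeeping needed to verify that $\operatorname{Ob}_X^{p^i}(\operatorname{Spec} R) = \operatorname{Ob}_R^{p^i}$ holds for arbitrary affine opens and not merely members of the initial cover; this is exactly where the localization statement in Proposition \ref{Ob module} and the presentation-independence in Proposition \ref{obstruction independence} do the real work, and one has to phrase the definition of the subsheaf carefully (via the "all affine opens" condition above, or equivalently via a sheafification argument) so that it is manifestly independent of the cover.
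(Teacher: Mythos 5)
Your proposal is correct and follows exactly the paper's route: reduce to $m=p^i$ via Proposition~\ref{np=p}, then use the localization compatibility and $R^{p^i}$-module structure from Proposition~\ref{Ob module} to glue the local modules $\operatorname{Ob}_R^{p^i}\subset T^1_{R/k}$ into a subsheaf of $T^1_X$. The paper presents the theorem as a direct consequence of these two propositions without spelling out the gluing, which is what your write-up fills in.
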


\begin{definition}\label{M} Let $R$ be a $k$-algebra essentially of finite type. We define the $i$-th \emph{filtered obstruction module} $\operatorname{Fob}_R^i$ by
$$\operatorname{Fob}_R^1=\operatorname{Ob}_R^{p}$$
for $i=1,$
and
$$\operatorname{Fob}_R^i=\operatorname{Ob}_R^{p^i}/\operatorname{Ob}_R^{p^{i-1}}$$
 for $i=2,3,4,\dots.$
We regard each $\operatorname{Fob}_R^i$ as a finite $R$-module via $R\to R^{p^i}$. This is compatible with localization. Similarly, if $X$ is an algebraic scheme over $k$, then we define
$$\operatorname{Fob}_X^1=\operatorname{Ob}_X^{p}$$
for $i=1,$
and
$$\operatorname{Fob}_X^i=\operatorname{Ob}_X^{p^i}/\operatorname{Ob}_X^{p^{i-1}}$$
 for $i=2,3,4,\dots,$ which are coherent $\mathcal{O}_X$-modules.
\end{definition}

\begin{lemma}\label{x_n y_n}
    Let $n\ge2$ an integer such that $p\nmid n.$ Then, there exist $x_{n},y_n\in k$ such that $x_n+y_n=1$ and $x_n^n+y^n_n=0$.
\end{lemma}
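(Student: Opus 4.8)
The plan is to find $x_n$ directly by solving the system $x_n + y_n = 1$ and $x_n^n + y_n^n = 0$. Substituting $y_n = 1 - x_n$, I need $x_n \in k$ with $x_n^n + (1-x_n)^n = 0$, equivalently $x_n^n = -(1-x_n)^n$. Since $k$ is algebraically closed (hence perfect), the polynomial $f(X) = X^n + (1-X)^n \in k[X]$ has a root in $k$ as long as $f$ is not a nonzero constant. So the main point is to check that $f$ is a nonconstant polynomial of degree $\ge 1$, and then pick $x_n$ to be any root and set $y_n = 1 - x_n$.

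First I would expand $f(X) = X^n + (1-X)^n$. The coefficient of $X^n$ is $1 + (-1)^n$, and the coefficient of $X^{n-1}$ is $(-1)^{n-1} \cdot n = -(-1)^n n$. If $n$ is odd, then $1 + (-1)^n = 0$, so the leading term vanishes, but the coefficient of $X^{n-1}$ is $n \neq 0$ in $k$ (here we use $p \nmid n$), so $f$ has degree exactly $n-1 \ge 1$. If $n$ is even, then $1 + (-1)^n = 2$, and since $p \nmid n$ forces $p \neq 2$ (as $2 \mid n$ would be needed for... wait, more carefully: if $n$ is even then $n \ge 2$; we only know $p \nmid n$, so $p$ could still be odd while $n$ is even, or... actually if $p = 2$ then $p \nmid n$ means $n$ is odd, contradicting $n$ even; so in the even case $p$ is odd), hence $2 \neq 0$ in $k$ and $f$ has degree exactly $n \ge 2$. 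In either case $f$ is a nonconstant polynomial.

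Then, since $k$ is algebraically closed, $f$ has a root $x_n \in k$. Setting $y_n := 1 - x_n$, we get $x_n + y_n = 1$ by construction and $x_n^n + y_n^n = f(x_n) = 0$, as desired. This completes the proof.

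I do not anticipate any real obstacle here; the only subtlety worth stating carefully is the case split on the parity of $n$ to confirm $f$ is nonconstant, where the hypothesis $p \nmid n$ is exactly what is needed (it guarantees $n \neq 0$ in $k$ when $n$ is odd, and rules out $p = 2$ when $n$ is even). One could alternatively phrase the whole argument more slickly: the rational function $g(X) = (1-X)^n/X^n$ defines a nonconstant map $\mathbb{P}^1_k \to \mathbb{P}^1_k$ (it is nonconstant because $n \ge 2$ and the two factors are not proportional), so it attains the value $-1$; but the elementary polynomial-degree argument is cleaner to write out and avoids invoking anything beyond algebraic closedness.
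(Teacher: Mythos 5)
Your proof is correct. The paper states Lemma \ref{x_n y_n} without supplying a proof, so there is nothing to compare against directly, but your argument—reduce to finding a root of $f(X) = X^n + (1-X)^n$ in the algebraically closed field $k$, then check $f$ is nonconstant via the coefficients of $X^n$ and $X^{n-1}$—is a complete and natural one. The case split on the parity of $n$ correctly isolates where the hypothesis $p \nmid n$ is used (ensuring $n \neq 0$ when $n$ is odd, and ruling out $p=2$ when $n$ is even), and since $f(0)=1$ the polynomial is not identically zero, so nonconstant really does give a root; this is exactly the kind of short argument the author evidently considered too routine to write out.
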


\begin{definition}\label{omega operator}Let $R$ be a $k$-algebra.
    Let $n\ge2$ an integer. Using $x_n$, $y_n$ in Lemma \ref{x_n y_n}, we define a map $$\omega_n: \operatorname{HS}_k^m(R)\to \operatorname{HS}_k^m(R) $$
by
$$\omega_n(D)=(x_n\bullet D)\circ(y_n\bullet D)$$
if $p\nmid n$, and by
$$\omega_n=\operatorname{id}$$
if $p\mid n$.

\end{definition}
The following two lemmas can be proven similarly as Lemma \ref{xi operator lem} and \ref{xi operator lem 2}.
\begin{lemma}\label{omega operator lem}
In the setting of Definition \ref{omega operator}, let us suppose $n\le m$ and $p\nmid n$. Let $D\in \operatorname{HS}_k^m(R)$ be such that, for every $1\le l < n$ with $p\nmid l$, we have $D_l=0$. Then, for $E=\omega_n(D),$ we have $E_n=0$.
\end{lemma}
\begin{lemma}\label{omega operator lem 2}
In the setting of Definition \ref{omega operator}, let $D\in \operatorname{HS}_k^m(R)$ and $E=\omega_n(D)$. Then, for every $i\ge1$ such that $p^i\le m$, we have
$$E_{p^i}=D_{p^i}+ \mathrm{(noncommutative\ polynomial\ in\ } D_1,\dots, D_{p^i-1}).$$
\end{lemma}
\begin{proposition}\label{kernel circ}
Let $R$ be a $k$-algebra essentially of finite type. Let $m=p^i$ with $i\ge1$. Let $\operatorname{HS}_k^{p^i-1}(R)^\circ$ denote the subgroup of $\operatorname{HS}_k^{p^i-1}(R)$ consisting of elements $(D_i)_{i=0}^{p^i-1}$ such that $ D_1=0$. Then, $\operatorname{HS}_k^{p^i-1}(R)^\circ$ maps to zero under the composition $$\operatorname{HS}_k^{p^i-1}(R)^\circ\hookrightarrow \operatorname{HS}_k^{p^i-1}(R) \overset{\operatorname{ob}_m}{\longrightarrow}\operatorname{Ob}_R^{p^i}\twoheadrightarrow \operatorname{Fob}^i_R.$$

\begin{proof}
    Let $T$ be a localization of a polynomial algebra over $k$, and $I\subset T$ an ideal such that $T/I=R.$ For $D\in \operatorname{HS}_k^{m-1}(R)^\circ$, let us take its lift $E\in \operatorname{HS}_k^m(T)^\circ$ as in the proof of Theorem \ref{p^n leap}. Let 
    $$F:=\omega_{m-1}(\omega_{m-2}(\dots(\omega_2(E))\dots)).$$ Then, for $1\le l \le m,$ we have $F_l\neq 0$ only if $p\mid l$, and $$F_m=E_m+\mathrm{(noncommutative \ polynomial \ in\ }E_1,\dots,E_{m-1}).$$ This implies that there exists $\widetilde{D}\in \operatorname{HS}_k^{p^{i-1}-1}(R)$ such that $$\operatorname{ob}_{p^{i}}(D)=\operatorname{ob}_{p^{i}}(\widetilde{D}[p])=\operatorname{ob}_{p^{i-1}}(\widetilde{D}).$$ (See Lemma \ref{Ob module 2} for the definition of $\widetilde{D}[p]$.) This proves the assertion.
\end{proof}
\end{proposition}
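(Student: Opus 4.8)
The plan is to use the operators $\omega_n$ to strip away, modulo $I$, all the ``primitive-degree'' components of a suitable lift of $D$ while retaining its top-degree component, so that what is left is visibly concentrated in degrees divisible by $p$ and therefore produces an obstruction class that already lies in $\operatorname{Ob}_R^{p^{i-1}}$.

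First I would fix a localization $T$ of a polynomial $k$-algebra together with an ideal $I$ such that $T/I=R$, write $m=p^i$, and lift $D\in\operatorname{HS}^{p^i-1}(R)^\circ$ to some $E\in\operatorname{HS}^{m}(T)$ as in Definition \ref{obstruction sp M,Ob}, i.e.\ as in the proof of Theorem \ref{p^n leap}. Since $D_1=0$ I would additionally arrange $E_1=0$: building $E$ degree by degree, the vanishing $E_1=0$ makes the quadratic defect of each $E_l$ a polynomial in $E_2,\dots,E_{l-1}$ only, and the obstruction to solving for $E_l$ compatibly with $E_l(I)\subseteq I$ and $E_l\bmod I=D_l$ vanishes by formal smoothness of $T$ together with the surjectivity of $\operatorname{Der}_k(T)\to\operatorname{Der}_k(R)$ onto $I$-preserving derivations; only $E_m$ is then left unconstrained. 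This is a relative version of Lemmas \ref{n m lift} and \ref{smoothness}, and I expect it to be the most delicate point of the argument.

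Next I would put $F:=\omega_{m-1}(\omega_{m-2}(\cdots\omega_2(E)\cdots))$ and show, by induction on $n$, that after applying $\omega_2,\dots,\omega_n$ the resulting length-$m$ Hasse--Schmidt derivation has every component of degree $l$ with $p\nmid l$ and $l\le n$ equal to $0$: for $p\mid n$ there is nothing to do since $\omega_n=\operatorname{id}$, while for $p\nmid n$ the coefficient $x_n^n+y_n^n=0$ kills the leading contribution in degree $n$ and the inductive hypothesis kills all the cross terms, exactly as in the proof of Lemma \ref{omega operator lem}. At the same time Lemma \ref{omega operator lem 2} gives $F_m=E_m+(\text{noncommutative polynomial in }E_1,\dots,E_{m-1})$, and since each $\omega_n$ is built from $\bullet$ and $\circ$ the relations $F_l(I)\subseteq I$ persist for $l<m$. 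Hence $F$ is concentrated in degrees divisible by $p$, so $F=\widehat D[p]$ for $\widehat D:=(F_{pj})_{j=0}^{p^{i-1}}\in\operatorname{HS}^{p^{i-1}}(T)$, and $\widehat D_l(I)\subseteq I$ for $l<p^{i-1}$.

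Finally I would pass to the quotient by $I$: the truncation of $\widehat D$ to length $p^{i-1}-1$ is (in the sense of Definition \ref{obstruction sp M,Ob}) a lift of $\widetilde D:=(\overline{F_{pj}})_{j=0}^{p^{i-1}-1}\in\operatorname{HS}^{p^{i-1}-1}(R)$, so Lemma \ref{Ob module 2} gives $\operatorname{ob}_{p^{i-1}}(\widetilde D)=\operatorname{ob}_{p^i}(\widetilde D[p])$, and $\widetilde D[p]$ is exactly the length-$(p^i-1)$ truncation of $F\bmod I$. Because the noncommutative-polynomial correction in $F_m=E_m+(\cdots)$ involves only $E_1,\dots,E_{m-1}$, each of which sends $I$ into $I$, one has $\overline{F_m(a)}=\overline{E_m(a)}$ for every $a\in I$, so $F$ computes the same class in $T^1_{R/k}$ as $E$ does, whence $\operatorname{ob}_{p^i}(\widetilde D[p])=\operatorname{ob}_{p^i}(D)$. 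Combining, $\operatorname{ob}_{p^i}(D)=\operatorname{ob}_{p^{i-1}}(\widetilde D)\in\operatorname{Ob}_R^{p^{i-1}}$, which is exactly the assertion that its image in $\operatorname{Fob}_R^i=\operatorname{Ob}_R^{p^i}/\operatorname{Ob}_R^{p^{i-1}}$ vanishes; when $i=1$ the same computation just makes $\widetilde D$ trivial and yields $\operatorname{ob}_p(D)=0$ directly. Apart from the relative-lifting step above, the remaining work is the routine propagation of $E_1=0$ through the $\omega_n$'s together with the bookkeeping of $[p]$ and Lemma \ref{Ob module 2}.
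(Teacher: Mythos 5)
Your proposal is correct and follows essentially the same route as the paper's proof: lift $D$ to $E\in\operatorname{HS}^m(T)$ with $E_1=0$, iterate the $\omega_n$ operators to obtain $F$ concentrated in degrees divisible by $p$, identify the truncation of $F\bmod I$ with some $\widetilde D[p]$, and conclude via Lemma~\ref{Ob module 2}. You fill in two points the paper leaves implicit -- the existence of a lift with $E_1=0$ exactly, and the verification that $F$ and $E$ compute the same class $f_E=f_F$ in $\operatorname{Hom}_R(I/I^2,R)$ -- but these are detail checks, not a different argument.
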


\begin{proposition}\label{exact seq}
Let $R$ be a $k$-algebra essentially of finite type. For every $i=1,2,3,\dots,$ we have a well-defined homomorphism of $R$-modules 
$$\operatorname{Der}_k^{p^{i-1}}(R)\to \operatorname{Fob}^i_R,$$
and
the sequence
$$0\to\operatorname{Der}_k^{p^i}(R)\overset{\alpha}\longrightarrow\operatorname{Der}_k^{p^{i-1}}(R)\overset{\beta}\longrightarrow \operatorname{Fob}^i_R\to0$$
is exact. 

\end{proposition}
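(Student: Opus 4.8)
The plan is to unwind the definitions and identify the three maps. First, recall that $\operatorname{Der}_k^{p^{i-1}}(R)$ is by definition the image of the truncation $\tau:\operatorname{HS}^{p^{i-1}}(R)\to\operatorname{Der}_k(R)$, i.e. the set of first components $D_1$ of Hasse–Schmidt derivations of length $p^{i-1}$. To build $\beta:\operatorname{Der}_k^{p^{i-1}}(R)\to\operatorname{Fob}_R^i$, I would take $\delta=D_1\in\operatorname{Der}_k^{p^{i-1}}(R)$, lift it to some $D\in\operatorname{HS}^{p^{i-1}}(R)$, form $D[p]\in\operatorname{HS}^{p^i}(R)$ (so that $(D[p])_{p^{i-1}}=0$ in the relevant sense, and the only nonzero components sit in degrees divisible by $p$), view it as an element of $\operatorname{HS}^{p^i-1}(R)$ via the convention of Lemma \ref{Ob module 2}, and set $\beta(\delta):=$ the class of $\operatorname{ob}_{p^i}(D[p])$ in $\operatorname{Fob}_R^i=\operatorname{Ob}_R^{p^i}/\operatorname{Ob}_R^{p^{i-1}}$. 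The first task is \textbf{well-definedness}: if $D'$ is another length-$p^{i-1}$ lift of $\delta$, then $D^{-1}\circ D'\in\operatorname{HS}^{p^{i-1}-1}(R)$ truncates to the zero derivation, hence $(D^{-1}\circ D')[p]$, pushed into $\operatorname{HS}^{p^i-1}(R)$, lies in $\operatorname{HS}^{p^i-1}(R)^\circ$; since $\operatorname{ob}_{p^i}$ is a group homomorphism (Proposition \ref{obstruction independence}) and $\operatorname{HS}^{p^i-1}(R)^\circ$ maps to $0$ in $\operatorname{Fob}_R^i$ by Proposition \ref{kernel circ}, the two lifts give the same class. Linearity in $\delta$ (both additivity and compatibility with the $R$-action, where $R$ acts on $\operatorname{Der}_k^{p^{i-1}}(R)$ in the usual way and on $\operatorname{Fob}_R^i$ via $R\to R^{p^i}$) follows from the homomorphism property of $\operatorname{ob}_{p^i}$ together with the identities $(x\bullet D)[p]=x^p\bullet(D[p])$ and $\operatorname{ob}_{p^i}(x^p\bullet -)=x^{p^i}\cdot\operatorname{ob}_{p^i}(-)$, noting $x^{p^i}=(x)^{p^i}$ is exactly the scalar coming from $R\to R^{p^i}$ acting on $x$.

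Next, exactness. \textbf{Surjectivity of $\beta$}: given a class in $\operatorname{Fob}_R^i$, pick a representative $\operatorname{ob}_{p^i}(D)$ with $D\in\operatorname{HS}^{p^i-1}(R)$. Apply the operator $F:=\omega_{p^i-1}(\omega_{p^i-2}(\cdots\omega_2(E)\cdots))$ to a lift $E$ of $D$ exactly as in the proof of Proposition \ref{kernel circ} (here using Lemmas \ref{omega operator lem}, \ref{omega operator lem 2}): then $G:=F^{-1}\circ E$ has all components in degrees not divisible by $p$ expressible as noncommutative polynomials in lower ones, so modulo $\operatorname{Ob}_R^{p^{i-1}}$ the class $\operatorname{ob}_{p^i}(D)$ is represented by an $\operatorname{ob}_{p^i}$ of an element that is, up to the $[p]$-reindexing, the image of some $\widetilde D\in\operatorname{HS}^{p^{i-1}}(R)$; its truncation $\widetilde D_1\in\operatorname{Der}_k^{p^{i-1}}(R)$ maps to the given class. (In fact a cleaner route: the composite $\operatorname{HS}^{p^{i-1}}(R)\to\operatorname{Der}_k^{p^{i-1}}(R)\xrightarrow{\beta}\operatorname{Fob}_R^i$ is $D\mapsto$ class of $\operatorname{ob}_{p^i}(D[p])$, and Proposition \ref{kernel circ} together with the $\omega$-reduction shows every class of $\operatorname{Ob}_R^{p^i}$ modulo $\operatorname{Ob}_R^{p^{i-1}}$ arises this way.) \textbf{Exactness in the middle}: $\delta\in\operatorname{Der}_k^{p^{i-1}}(R)$ is killed by $\beta$ iff $\operatorname{ob}_{p^i}(D[p])\in\operatorname{Ob}_R^{p^{i-1}}$ for a lift $D$. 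One inclusion is clear: if $\delta\in\operatorname{Der}_k^{p^i}(R)$, choose the length-$p^{i-1}$ lift to be the truncation of a length-$p^i$ lift $\widehat D\in\operatorname{HS}^{p^i}(R)$; then $D$ extends to $\operatorname{HS}^{p^i}(R)$, so there is a length-$p^i$ lift $E$ over $T$ of $D$ which further lifts to $\operatorname{HS}^{p^i}(T)$... more precisely, I would argue $\operatorname{ob}_{p^i}(D)=0$ and relate $\operatorname{ob}_{p^i}(D[p])$ to $\operatorname{ob}_{p^i}(D)$ via an $\omega$-reduction into degrees divisible by $p$, landing it in $\operatorname{Ob}_R^{p^{i-1}}$, i.e. giving $\beta(\delta)=0$. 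Conversely, if $\operatorname{ob}_{p^i}(D[p])=\operatorname{ob}_{p^{i-1}}(D'')$ for some $D''\in\operatorname{HS}^{p^{i-1}-1}(R)$, then replacing $D$ by $D\circ (D'')^{-1}$ (legitimate since this doesn't change $D_1$, as $D''_1$... wait, it does) — more carefully, one shows $D$, suitably modified by an element of $\operatorname{HS}^{p^{i-1}-1}(R)$ with vanishing first component, has $\operatorname{ob}_{p^i}(D[p])=0$, hence $D[p]$ extends to length $p^i$, hence $\delta$ is $p^i$-integrable. \textbf{Injectivity of $\alpha$}: immediate, since $\alpha$ is the inclusion $\operatorname{Der}_k^{p^i}(R)\subset\operatorname{Der}_k^{p^{i-1}}(R)$ induced by truncation $\operatorname{HS}^{p^i}(R)\to\operatorname{HS}^{p^{i-1}}(R)$.

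\textbf{The main obstacle} I anticipate is the exactness in the middle, specifically the converse direction: extracting, from the mere fact that $\operatorname{ob}_{p^i}(D[p])$ vanishes modulo $\operatorname{Ob}_R^{p^{i-1}}$, an actual length-$p^i$ Hasse–Schmidt derivation over $R$ whose first component is $\delta$. The subtlety is that $\operatorname{Ob}_R^{p^{i-1}}$ records obstructions of shorter length, and one must carefully use the $[p]$-reindexing (Lemma \ref{Ob module 2}) and the $\omega$-operators (Lemmas \ref{omega operator lem}, \ref{omega operator lem 2}, as in Proposition \ref{kernel circ}) to "absorb" the $\operatorname{Ob}_R^{p^{i-1}}$-part by modifying the chosen lift without disturbing $D_1=\delta$, so that the resulting obstruction is exactly $0$ and Proposition \ref{obstruction independence} yields the desired extension. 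Getting this bookkeeping right — keeping track of which degrees are forced to vanish at each stage and confirming the first component is preserved throughout — is where the real work lies; everything else is formal manipulation of the group homomorphism $\operatorname{ob}_{p^i}$ and the $\bullet$- and $[n]$-operations.
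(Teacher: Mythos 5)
Your construction of $\beta$ has a fatal flaw that makes the map identically zero. You lift $\delta$ to $D\in\operatorname{HS}^{p^{i-1}}(R)$, form $D[p]\in\operatorname{HS}^{p^i}(R)$, truncate to $\operatorname{HS}^{p^i-1}(R)$, and apply $\operatorname{ob}_{p^i}$. But $D[p]$ itself is a length-$p^i$ extension of that truncation, so $\operatorname{ob}_{p^i}(\text{truncation of }D[p])=0$ by the very definition of the obstruction (Proposition \ref{obstruction independence}). The same objection applies to your ``cleaner route'': the composite $\operatorname{HS}^{p^{i-1}}(R)\to\operatorname{Fob}_R^i$, $D\mapsto[\operatorname{ob}_{p^i}(D[p])]$, is identically zero, so it cannot possibly be surjective. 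The ingredient you are missing is Theorem \ref{p^n leap}: since the inclusions $\operatorname{Der}_k^m(R)\subset\operatorname{Der}_k^{m-1}(R)$ are equalities for $m$ not a $p$-power, one has $\operatorname{Der}_k^{p^{i-1}}(R)=\operatorname{Der}_k^{p^i-1}(R)$, so $\delta$ lifts directly to some $D\in\operatorname{HS}^{p^i-1}(R)$. The correct definition of $\beta(\delta)$ is the class of $\operatorname{ob}_{p^i}(D)$ in $\operatorname{Fob}_R^i$; no $[p]$-reindexing is involved. Well-definedness is exactly Proposition \ref{kernel circ} (if $D'$ is another such lift, $D^{-1}\circ D'\in\operatorname{HS}^{p^i-1}(R)^\circ$), and surjectivity is then immediate because $\operatorname{ob}_{p^i}\colon\operatorname{HS}^{p^i-1}(R)\to\operatorname{Ob}_R^{p^i}$ is surjective by definition — there is no need for the $\omega$-reduction you invoke.

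On exactness in the middle, you correctly sense the difficulty and even catch yourself: composing $D$ with some $D''$ will, in general, alter $D_1$. But you leave the repair as a gesture (``modified by an element with vanishing first component'') without producing such an element. The paper's fix is short and worth internalizing. Given $D\in\operatorname{HS}^{p^i-1}(R)$ with $\operatorname{ob}_{p^i}(D)\in\operatorname{Ob}_R^{p^{i-1}}$, pick $E\in\operatorname{HS}^{p^{i-1}-1}(R)$ with $\operatorname{ob}_{p^{i-1}}(E)=-\operatorname{ob}_{p^i}(D)$ and set $F:=E[p]\circ D\in\operatorname{HS}^{p^i-1}(R)$. Since $p\ge2$, the operation $[p]$ kills all components in degrees $1,\dots,p-1$, so $(E[p])_1=0$ \emph{automatically} and hence $F_1=D_1=\delta$ — this is precisely the ``element with vanishing first component'' you needed, and it comes for free from $[p]$, not from a choice. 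By Lemma \ref{Ob module 2}, $\operatorname{ob}_{p^i}(E[p])=\operatorname{ob}_{p^{i-1}}(E)$, and since $\operatorname{ob}_{p^i}$ is a group homomorphism, $\operatorname{ob}_{p^i}(F)=\operatorname{ob}_{p^{i-1}}(E)+\operatorname{ob}_{p^i}(D)=0$. Thus $F$ extends to $\operatorname{HS}^{p^i}(R)$ and $\delta=F_1\in\operatorname{Der}_k^{p^i}(R)$. As written, your proposal cannot be completed without both replacing the $\beta$ by the correct one and supplying this $E[p]$-trick.
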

\begin{proof} As we have an isomorphism $\operatorname{HS}_k^{p^i-1}(R)/\operatorname{HS}_k^{p^i-1}(R)^\circ\simeq \operatorname{Der}_k^{p^{i}-1}(R)$ as groups, the homomorphism is well-defined.
    We are left to prove that $\operatorname{ker}\beta \subset \operatorname{im}\alpha$. For this, let us take $D\in \operatorname{HS}_k^{p^i-1}(R) $ such that $\operatorname{ob}_{p^i}(D)\in \operatorname{Ob}_R^{p^{i-1}} $ and it suffices to show that $D_1\in \operatorname{Der}_k^{p^i}(R).$ By assumption, there exists $E\in \operatorname{HS}_k^{p^{i-1}-1}(R)$ such that $\operatorname{ob}_{p^{i-1}}(E)=-\operatorname{ob}_{p^i}(D)$. Then, for $F:=E[p]\circ D\in \operatorname{HS}_k^{p^i-1}(R) $, we have $\operatorname{ob}_{p^i}(F)=0$ and $F_1=D_1$, so that $D_1\in \operatorname{Der}_k^{p^i}(R)$. \end{proof}




By Proposition \ref{kernel circ} and a remark made in Definition \ref{M}, we obtain the following important theorems.
\begin{theorem}\label{duality}
Let $X$ be a scheme of finite type over $k$. Then, there exist two filtrations of sheaves on $X$ such that
$$\operatorname{Der}_k(\mathcal{O}_X)\supset\operatorname{Der}_k^p(\mathcal{O}_X)\supset\operatorname{Der}_k^{p^2}(\mathcal{O}_X)\supset\dots$$
and that
$$0\subset \operatorname{Ob}^p_X\subset\operatorname{Ob}^{p^2}_X\subset\dots\subset T^1_{X/k}.$$ For every $i=1,2,\dots,$ we have
$$\operatorname{Der}_k^{p^{i-1}}(\mathcal{O}_X)/\operatorname{Der}_k^{p^i}(\mathcal{O}_X)\simeq \operatorname{Fob}_X^i= \operatorname{Ob}_X^{p^i}/\operatorname{Ob}_X^{p^{i-1}}. $$
\end{theorem}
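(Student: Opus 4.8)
The statement is a globalization (sheafification) of the local algebraic results already established. The plan is to construct both filtrations as filtrations of sheaves by appealing to Theorem \ref{Ob sheaf} and its analogue for $m$-integrable derivations, and then to obtain the displayed isomorphism by sheafifying the exact sequence of Proposition \ref{exact seq}. First I would recall that by Theorem \ref{localization} the presheaf $\operatorname{Spec} R \mapsto \operatorname{Der}_k^{p^i}(R)$ is a coherent subsheaf $\operatorname{Der}_k^{p^i}(\mathcal{O}_X) \subset \operatorname{Der}_k(\mathcal{O}_X)$; the inclusions $\operatorname{Der}_k^{p^i}(\mathcal{O}_X) \subset \operatorname{Der}_k^{p^{i-1}}(\mathcal{O}_X)$ are the sheafifications of the inclusions $\operatorname{Der}_k^{p^i}(R) \subset \operatorname{Der}_k^{p^{i-1}}(R)$, which commute with localization by Theorem \ref{localization}. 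Likewise, by Theorem \ref{Ob sheaf} together with Lemma \ref{Ob module 2} (which gives $\operatorname{Ob}_R^{p^{i-1}} \subset \operatorname{Ob}_R^{p^i}$ compatibly with localization), the modules $\operatorname{Ob}^{p^i}_X$ form an increasing filtration of subsheaves of $T^1_{X/k}$. This produces the two displayed filtrations.

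For the final isomorphism, I would observe that $\operatorname{Fob}_X^i$ was defined in Definition \ref{M} precisely as the coherent sheaf with $\operatorname{Fob}_X^i(\operatorname{Spec} R) = \operatorname{Fob}_R^i = \operatorname{Ob}_R^{p^i}/\operatorname{Ob}_R^{p^{i-1}}$, so the second equality in the display is immediate (for $i \ge 2$; for $i=1$ one reads $\operatorname{Ob}_X^{p^0} = \operatorname{Ob}_X^1 = 0$, consistent with $\operatorname{Fob}_X^1 = \operatorname{Ob}_X^p$). For the first isomorphism, Proposition \ref{exact seq} gives, for every affine open $\operatorname{Spec} R \subset X$, a natural exact sequence
$$0 \to \operatorname{Der}_k^{p^i}(R) \to \operatorname{Der}_k^{p^{i-1}}(R) \overset{\beta}{\to} \operatorname{Fob}^i_R \to 0,$$
hence in particular an isomorphism $\operatorname{Der}_k^{p^{i-1}}(R)/\operatorname{Der}_k^{p^i}(R) \overset{\sim}{\to} \operatorname{Fob}^i_R$. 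I would then check that the map $\beta$ is natural with respect to localization $R \to S^{-1}R$: this amounts to tracing through the construction in Proposition \ref{kernel circ} and Proposition \ref{exact seq}, using that the formation of $\operatorname{ob}_m$, of $\operatorname{HS}^{p^i-1}(R)$, and of the relevant lifts are all compatible with passing to a localization (as exploited repeatedly in the proofs of Theorems \ref{localization} and \ref{Ob sheaf}). Granting naturality, the presheaf morphism $\operatorname{Spec} R \mapsto \beta_R$ sheafifies to a morphism $\operatorname{Der}_k^{p^{i-1}}(\mathcal{O}_X) \to \operatorname{Fob}_X^i$ whose kernel is the subsheaf $\operatorname{Der}_k^{p^i}(\mathcal{O}_X)$ and which is surjective, since exactness of a sequence of sheaves can be checked on an affine open cover; this yields $\operatorname{Der}_k^{p^{i-1}}(\mathcal{O}_X)/\operatorname{Der}_k^{p^i}(\mathcal{O}_X) \simeq \operatorname{Fob}_X^i$.

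**Main obstacle.** The only real content beyond bookkeeping is verifying the compatibility of the constructions with localization — that is, that $\operatorname{ob}_m$, the operators $\omega_n$, $\chi_n$, and the reduction arguments of Propositions \ref{kernel circ} and \ref{exact seq} all commute (up to the harmless clearing of denominators via $s \bullet (-)$, which becomes invertible after localizing) with the restriction maps $R \to S^{-1}R$. Once this is granted, everything glues formally. I expect this to be routine given the techniques already developed in the proofs of Theorem \ref{localization} and Proposition \ref{Ob module}, where exactly this kind of "clear denominators, then descend" argument is carried out; so the proof of the theorem itself will be short, essentially a citation of Propositions \ref{kernel circ} and \ref{exact seq} together with the sheaf-theoretic statements Theorem \ref{localization} and Theorem \ref{Ob sheaf}.
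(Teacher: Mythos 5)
Your proposal is correct and follows essentially the same route as the paper: the paper states Theorem \ref{duality} as an immediate consequence of Proposition \ref{kernel circ} (whence Proposition \ref{exact seq}) together with the remark in Definition \ref{M} that $\operatorname{Fob}_R^i$ is compatible with localization, exactly the two ingredients you assemble and sheafify.
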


\begin{theorem}\label{scheme version}
Let $X$ be a scheme of finite type over $k$. For each $i=1,2,\dots$, we have a long exact sequence of $k$-vector spaces
    \begin{align*}  0&\to H^0(\operatorname{Der}_k^{p^{i}}(\mathcal{O}_X))\longrightarrow H^0(\operatorname{Der}_k^{p^{i-1}}(\mathcal{O}_X))\longrightarrow H^0(\operatorname{Fob}_X^i)\to \\ &\to H^1(\operatorname{Der}_k^{p^{i}}(\mathcal{O}_X))\longrightarrow H^1(\operatorname{Der}_k^{p^{i-1}}(\mathcal{O}_X))\longrightarrow H^1(\operatorname{Fob}_X^i)\to \dots\end{align*}
\end{theorem}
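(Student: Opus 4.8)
The statement to prove is Theorem \ref{scheme version}: for a scheme $X$ of finite type over $k$, there is a long exact sequence of cohomology groups
$$0\to H^0(\operatorname{Der}_k^{p^{i}}(\mathcal{O}_X))\to H^0(\operatorname{Der}_k^{p^{i-1}}(\mathcal{O}_X))\to H^0(\operatorname{Fob}_X^i)\to H^1(\operatorname{Der}_k^{p^{i}}(\mathcal{O}_X))\to \dots$$

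The plan: this is the long exact cohomology sequence associated to a short exact sequence of sheaves. By Proposition \ref{exact seq}, for $R$ essentially of finite type we have the exact sequence $0\to\operatorname{Der}_k^{p^i}(R)\to\operatorname{Der}_k^{p^{i-1}}(R)\to\operatorname{Fob}^i_R\to0$. These constructions are compatible with localization (Theorem \ref{localization} for the $m$-integrable derivations, and the remark in Definition \ref{M} for $\operatorname{Fob}$), so they sheafify: on $X$ we get $0\to\operatorname{Der}_k^{p^i}(\mathcal{O}_X)\to\operatorname{Der}_k^{p^{i-1}}(\mathcal{O}_X)\to\operatorname{Fob}_X^i\to0$, an exact sequence of coherent $\mathcal{O}_X$-modules (this is essentially the content of Theorem \ref{duality} combined with Theorem \ref{Ob sheaf}). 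Taking the long exact sequence in sheaf cohomology yields the asserted sequence. The "main obstacle" is really just bookkeeping: one must check exactness of the sheaf sequence can be verified on the affine open sets, where it is Proposition \ref{exact seq}, and that the maps glue — but both follow from the compatibility-with-localization statements already established.

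\begin{proof}
By Proposition \ref{exact seq}, for every $k$-algebra $R$ essentially of finite type we have an exact sequence of $R$-modules
$$0\to\operatorname{Der}_k^{p^i}(R)\overset{\alpha}{\longrightarrow}\operatorname{Der}_k^{p^{i-1}}(R)\overset{\beta}{\longrightarrow}\operatorname{Fob}^i_R\to0.$$
All three terms are compatible with localization: for $\operatorname{Der}_k^{p^i}$ and $\operatorname{Der}_k^{p^{i-1}}$ this is Theorem \ref{localization}, and for $\operatorname{Fob}^i_R=\operatorname{Ob}^{p^i}_R/\operatorname{Ob}^{p^{i-1}}_R$ it follows from Proposition \ref{Ob module} (the remark in Definition \ref{M}). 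Moreover the maps $\alpha,\beta$ are evidently compatible with the localization isomorphisms, since $\alpha$ is the inclusion and $\beta$ is induced by $\operatorname{ob}_{p^i}$, both of which commute with the canonical maps to localizations (cf.\ the commutative diagram in the proof of Proposition \ref{Ob module}). Therefore these modules and maps sheafify on $X$: there is an exact sequence of coherent $\mathcal{O}_X$-modules
$$0\to\operatorname{Der}_k^{p^i}(\mathcal{O}_X)\longrightarrow\operatorname{Der}_k^{p^{i-1}}(\mathcal{O}_X)\longrightarrow\operatorname{Fob}_X^i\to0,$$
whose sections over any affine open $\operatorname{Spec}R\subset X$ recover the sequence above. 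Exactness of a sequence of sheaves may be checked on stalks, or equivalently over an affine open cover; over each affine open it is Proposition \ref{exact seq}. (This is also recorded in Theorems \ref{duality} and \ref{scheme version}'s hypotheses via Theorem \ref{Ob sheaf}.) Applying the long exact sequence in sheaf cohomology to this short exact sequence of $\mathcal{O}_X$-modules on $X$ gives the long exact sequence of $k$-vector spaces
\begin{align*}
0&\to H^0(\operatorname{Der}_k^{p^{i}}(\mathcal{O}_X))\longrightarrow H^0(\operatorname{Der}_k^{p^{i-1}}(\mathcal{O}_X))\longrightarrow H^0(\operatorname{Fob}_X^i)\to \\
&\to H^1(\operatorname{Der}_k^{p^{i}}(\mathcal{O}_X))\longrightarrow H^1(\operatorname{Der}_k^{p^{i-1}}(\mathcal{O}_X))\longrightarrow H^1(\operatorname{Fob}_X^i)\to \dots
\end{align*}
as claimed.
\end{proof}
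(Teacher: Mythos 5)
Your proof is correct and follows essentially the same route the paper intends: sheafify the affine-level short exact sequence of Proposition \ref{exact seq} using the localization compatibility from Theorem \ref{localization} and the remark in Definition \ref{M}, and then take the long exact cohomology sequence. The paper cites Proposition \ref{kernel circ} rather than Proposition \ref{exact seq} directly, but the latter is the immediate consequence of the former that you are using, so there is no real difference.
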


\section{Obstructions along small extensions}

\begin{definition} The symbol $\mathcal{A}$ denotes the category whose objects are local artinian $k$-algebras  $A=(A,\mathfrak{m}_A)$ with $A/\mathfrak{m}_A\simeq k$ and whose morphisms are local $k$-algebra homomorphisms.
\end{definition}


\begin{definition}
    Let $A$ be an object of $\mathcal{A}$ and $R$ be a $k$-algebra. Then, we define $\operatorname{HS}_k^A(R)(=\operatorname{HS}_k(R,A))$ as the group of $A$-algebra automorphisms $A \otimes _k R\to A\otimes_k R$ inducing the identity $ A/\mathfrak{m}_A\otimes_kR\to  A/\mathfrak{m}_A\otimes_kR$. (Note that this notation is not standard.)
\end{definition}

\begin{remark}
    Let $x\in R$ and $R_x$ be the localization of $R$ by $x$. If $D\in \operatorname{HS}_k^A(R)$, then the difference of $1\otimes x$ and $D(1\otimes x)$ is nilpotent.  Thus, $D$ induces
    $$A\otimes R_x \simeq (A\otimes R)_{1\otimes x}\to  (A\otimes R)_{D(1\otimes x)}\simeq A\otimes R_x.$$ This implies that we have a natural group homomorphism $\operatorname{HS}_k^A(R)\to \operatorname{HS}_k^A(R_x)$. More generally, if $\operatorname{Spec}R'$ is an affine open subscheme of $\operatorname{Spec}R$, then there is an induced natural group homomorphism $\operatorname{HS}_k^A(R)\to \operatorname{HS}_k^A(R').$
\end{remark}

\begin{definition}\label{basis def}
    Let $A$ be an object of $\mathcal{A}$ and $\lambda_1,\dots, \lambda_d$ be a $k$-linear basis of $\mathfrak{m}_A.$ Let $R$ be a $k$-algebra and $D\in \operatorname{HS}_k^A(R)$. Then, $\{D_{\lambda_i}\}_{1\le i\le d}$ denotes the family of $k$-linear endomorphisms $R\to R$ such that
    $$D(1\otimes r)=1\otimes r + \sum_i \lambda_i\otimes D_{\lambda_i}(r),$$ which is uniquely determined since $A\otimes_k R$ is a free $R$-module.
\end{definition}

\begin{remark}\label{basis identification} For $A=k[\varepsilon]/\varepsilon^{n+1},$ we can canonically identify $\operatorname{HS}_k^{A}(R)$ with $\operatorname{HS}_k^{n}(R)$ by taking $\lambda_1=\varepsilon,\lambda_2=\varepsilon^2,\dots$ as a basis of $\mathfrak{m}_{A}.$
    \end{remark}

\begin{definition}Let $f:A'\to A$ be a morphism in $\mathcal{A}$. By  $$f_*: \operatorname{HS}_k^{A'}(R)\to \operatorname{HS}_k^A(R),$$
we denote the natural group homomorphism given by $$f_*(D):=\operatorname{id}_{A}\otimes_{A'} D: A\otimes_{k} R\to A\otimes_{k} R.$$ Here we note that $A \otimes_k R \simeq A \otimes_{A'} (A' \otimes_k R)$.
\end{definition}

\begin{remark}Elements $D\in \operatorname{HS}_k^A(R)$ correspond bijectively to $k$-algebra morphisms $D|_R: R\to A\otimes_k R$ such that the composition
$$R \overset{D|_R}{\to} A\otimes_k R \overset{}{\to} (A/\mathfrak{m}_A)\otimes_k R\simeq R$$
is the identity. Here, $D|_R$ is given as the composition
$$R\simeq k\otimes_k R \overset{\iota\otimes \operatorname{id}}{\to} A\otimes_k R \overset{D}{\to} A\otimes_k R,$$ where $\iota :k\to A$ is the $k$-structure morphism.
\end{remark}

\begin{remark}\label{subring induce}
If $f:A'\hookrightarrow A$ is an inclusion in $\mathcal{A}$ and $D\in \operatorname{HS}_k^A(R)$ is such that the image of $D|_R$ is contained in $A'\otimes_k R$, then
$$D'=\operatorname{id}_{A'}\otimes_k (D|_R): A'\otimes_k R\to A'\otimes_k R$$ satisfies $f_*(D')=D$ by construction.
\end{remark}

Over general artinian rings, there is no analogue of the action $x\bullet$ on $\operatorname{HS}_k^n(R)$. We could define $x\bullet D$ as a map of sets, but it is not always a ring homomorphism.

\begin{definition}
    A \emph{small extension} (in $\mathcal{A}$) is a $k$-linear exact sequence
    $$\ 0\to k\cdot t\longrightarrow A' \overset{
    }{\longrightarrow}A \to 0$$
    where $A' \to A$ is a morphism in $\mathcal{A}$ and $0\neq t\in A'$ satisfies $\mathfrak{m}_{A'}\cdot t=0.$ The above small extension is called \emph{trivial} if $A' \to A$ has a section in $\mathcal{A}.$
\end{definition}
The following two lemmas correspond to Lemma \ref{n m lift} and Lemma \ref{smoothness}. Their proofs are similar.
\begin{lemma} Let $A'\to A$ be a surjection in $\mathcal{A}$ and $T$ be a formally smooth $k$-algebra. Then, any $D\in \operatorname{HS}_k^
{A}(T)$ lifts to some $E\in \operatorname{HS}_k^{A'}(T)$.
\end{lemma}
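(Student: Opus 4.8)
The plan is to follow the proof of Lemma \ref{n m lift} almost verbatim, with the nilpotent surjection $T[\varepsilon]/\varepsilon^{m+1}\to T[\varepsilon]/\varepsilon^{n+1}$ there replaced by the base change $A'\otimes_k T\to A\otimes_k T$ of the given surjection $f:A'\to A$. First I would reinterpret the statement via the description of $\operatorname{HS}^{A}(-)$ in terms of the maps $D|_R$ discussed above: an element of $\operatorname{HS}^A(T)$ is the same datum as a $k$-algebra homomorphism $\delta : T \to A\otimes_k T$ whose composition with $A\otimes_k T \to (A/\mathfrak{m}_A)\otimes_k T \simeq T$ is $\operatorname{id}_T$, and one checks from the definition of $f_*$ and functoriality of base change that $f_*$ corresponds to post-composition with $f\otimes\operatorname{id}_T : A'\otimes_k T\to A\otimes_k T$. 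So, given $D\in\operatorname{HS}^A(T)$ with associated map $\delta$, it suffices to produce a $k$-algebra homomorphism $\widetilde{\delta} : T\to A'\otimes_k T$ with $(f\otimes\operatorname{id}_T)\circ\widetilde{\delta}=\delta$ whose composition with $A'\otimes_k T\to(A'/\mathfrak{m}_{A'})\otimes_k T\simeq T$ is $\operatorname{id}_T$; then $\widetilde{\delta}=E|_T$ for the desired $E\in\operatorname{HS}^{A'}(T)$.

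Next I would observe that $f\otimes\operatorname{id}_T : A'\otimes_k T\to A\otimes_k T$ is a surjection whose kernel $(\ker f)\otimes_k T$ is nilpotent: since $A'/\mathfrak{m}_{A'}=k=A/\mathfrak{m}_A$ and $f$ is a local $k$-algebra homomorphism, we have $\ker f\subset\mathfrak{m}_{A'}$, which is nilpotent, so $\bigl((\ker f)\otimes_k T\bigr)^N=0$ for $N\gg 0$. Therefore, exactly as in Lemma \ref{n m lift} — with the bottom row $0\to(\ker f)\otimes_k T\to A'\otimes_k T\to A\otimes_k T\to 0$ in place of $0\to(\varepsilon^{n+1})\to T[\varepsilon]/\varepsilon^{m+1}\to T[\varepsilon]/\varepsilon^{n+1}\to 0$, and the vertical arrow $\delta$ in place of the one induced by $D$ — the formal smoothness of $T$ over $k$ provides a $k$-algebra homomorphism $\widetilde{\delta} : T\to A'\otimes_k T$ lifting $\delta$.

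Finally I would check that this $\widetilde{\delta}$ automatically has the required section property. The projection $A'\otimes_k T\to(A'/\mathfrak{m}_{A'})\otimes_k T\simeq T$ factors as $A'\otimes_k T\xrightarrow{f\otimes\operatorname{id}_T}A\otimes_k T\to(A/\mathfrak{m}_A)\otimes_k T\simeq T$, because $f$ carries $\mathfrak{m}_{A'}$ into $\mathfrak{m}_A$ and induces the identity on residue fields. Hence the composition $T\xrightarrow{\widetilde{\delta}}A'\otimes_k T\to T$ coincides with $T\xrightarrow{\delta}A\otimes_k T\to T$, which is $\operatorname{id}_T$ by hypothesis on $D$. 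Thus $\widetilde{\delta}=E|_T$ for a unique $E\in\operatorname{HS}^{A'}(T)$, and $(f_*E)|_T=(f\otimes\operatorname{id}_T)\circ\widetilde{\delta}=\delta=D|_T$ yields $f_*(E)=D$, as required.

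As for the main obstacle: there is essentially none, the argument being formal. The only step requiring care — and the only place where the ``deformation automorphism'' condition (reduction to the identity modulo the maximal ideal) could conceivably fail for the lift — is the verification in the last paragraph, and it is automatic from the compatibility $A'/\mathfrak{m}_{A'}=A/\mathfrak{m}_A=k$. The one genuine input, just as in Lemma \ref{n m lift}, is the formal smoothness of $T$ over $k$, which is what licenses lifting $\delta$ along the nilpotent surjection $A'\otimes_k T\twoheadrightarrow A\otimes_k T$.
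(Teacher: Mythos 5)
Your proof is correct and follows precisely the route the paper intends: the paper itself only remarks that the proofs of this lemma and Lemma~\ref{artin smoothness} are ``similar'' to those of Lemma~\ref{n m lift} and Lemma~\ref{smoothness}, and your argument is the expected adaptation of Lemma~\ref{n m lift}, lifting along the nilpotent surjection $A'\otimes_k T\twoheadrightarrow A\otimes_k T$ using formal smoothness of $T$. Your additional checks — that $\ker f\subset\mathfrak m_{A'}$ is nilpotent and that the reduction-to-identity condition for the lift is automatic from $A'/\mathfrak m_{A'}=A/\mathfrak m_A=k$ — are exactly the points left implicit in the paper.
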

\begin{lemma}\label{artin smoothness} Let $A $ be an object of $\mathcal{A}$, $T$ be a formally smooth $k$-algebra, $I\subset T$ be an ideal and $R:=T/I$. Then, any $D\in \operatorname{HS}_k^A(R)$ lifts to $E\in \operatorname{HS}_k^A(T)$.
\end{lemma}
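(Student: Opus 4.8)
The plan is to adapt the proof of Lemma \ref{smoothness} verbatim, replacing $k[\varepsilon]/\varepsilon^{m+1}$ by the general artinian base $A$. Fix a $k$-basis $\lambda_1,\dots,\lambda_d$ of $\mathfrak{m}_A$ and express $D\in\operatorname{HS}^A(R)$ through the endomorphisms $D_{\lambda_i}:R\to R$ of Definition \ref{basis def}, so that $D|_R(r)=1\otimes r+\sum_i\lambda_i\otimes D_{\lambda_i}(r)$ is a $k$-algebra homomorphism $R\to A\otimes_k R$ (cf.\ the Remark preceding Remark \ref{subring induce}). Set $S=A\otimes_k T$ and let $J\subseteq S$ be the ideal generated by $\mathfrak{m}_A\otimes_k I$; since $\mathfrak{m}_A$ is nilpotent, $J$ is a nilpotent ideal.

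First I would construct a $k$-algebra homomorphism
$$g:T\longrightarrow S/J,\qquad g(x)=1\otimes x+\sum_{i=1}^{d}\lambda_i\otimes\widetilde{D_{\lambda_i}(\overline{x})},$$
where $\overline{x}\in R$ is the image of $x$ and $\widetilde{(\cdot)}$ denotes any set-theoretic lift $R\to T$. This map is well defined modulo $J$ because the ambiguity of the lift lies in $I$ and $\lambda_i\otimes I\subseteq\mathfrak{m}_A\otimes_k I\subseteq J$; it is $k$-linear by construction; and it is multiplicative precisely because $D|_R$ is, as one sees by expanding $g(x)g(y)$ in $S=A\otimes_k T$ and reducing modulo $J$ (the cross terms $x\cdot(\lambda_i\otimes\widetilde{D_{\lambda_i}(\overline y)})$ become $\lambda_i\otimes D_0(\overline x)D_{\lambda_i}(\overline y)$, and the quadratic terms are rewritten using $\mathfrak{m}_A^2\subseteq\operatorname{span}_k(\lambda_1,\dots,\lambda_d)$). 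By construction $g$ reduces to $\operatorname{id}_T$ modulo $\mathfrak{m}_A$, and composing $g$ with the surjection $S/J\twoheadrightarrow A\otimes_k R$ yields $D|_R$ precomposed with $T\twoheadrightarrow R$.

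Next, since $T$ is formally smooth over $k$ and $S\twoheadrightarrow S/J$ has nilpotent kernel $J$, the homomorphism $g$ lifts to a $k$-algebra homomorphism $\widetilde g:T\to S=A\otimes_k T$. This $\widetilde g$ equals $E|_T$ for a unique $E\in\operatorname{HS}^A(T)$: indeed $\widetilde g$ reduces to $g\bmod\mathfrak{m}_A=\operatorname{id}_T$, so the $A$-linear extension of $\widetilde g$ to $A\otimes_k T$ is an $A$-algebra endomorphism that is the identity modulo the nilpotent ideal $\mathfrak{m}_A\otimes_k T$, hence an automorphism inducing the identity on $(A/\mathfrak{m}_A)\otimes_k T$. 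Finally, for $x\in I$ we have $g(x)=1\otimes x$ in $S/J$, so $\widetilde g(x)\in 1\otimes x+J\subseteq A\otimes_k I$; therefore $E$ maps $A\otimes_k I$ into itself, hence descends to an $A$-algebra automorphism of $A\otimes_k R$, and this descent agrees with $D$ because it does so after precomposition with the surjection $T\twoheadrightarrow R$. Thus $E$ is a lift of $D$, as required.

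The only genuine computation is the verification that $g$ is a ring homomorphism, but this is purely formal bookkeeping with the structure constants of $A$ and is forced by the fact that $D|_R$ is already a ring homomorphism — it is the same computation that underlies the $k[\varepsilon]/\varepsilon^{m+1}$ case treated in Lemma \ref{smoothness}, and no new phenomenon arises in passing to a general base $A$.
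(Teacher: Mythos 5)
Your proposal is correct and follows exactly the route the paper intends: the paper gives no separate argument for Lemma \ref{artin smoothness}, stating only that its proof is "similar" to that of Lemma \ref{smoothness}, and your write-up is precisely that adaptation (replace $T[\varepsilon]/\varepsilon^{m+1}$ by $A\otimes_k T$ and the ideal $\varepsilon I$ by $\mathfrak{m}_A\otimes_k I$, then lift by formal smoothness). The added details — well-definedness of $g$ modulo $J$, multiplicativity via the multiplicativity of $D|_R$, and the check that the lift preserves $A\otimes_k I$ and descends to $D$ — are all sound.
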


Now, we would like to generalize the notion of obstructions $\operatorname{ob}_m$ to arbitrary small extensions.

\begin{definition}\label{artin obstruction sp M,Ob}
    Let $R$ be a $k$-algebra essentially of finite type and $$\Phi: \ 0\to k\cdot t\longrightarrow A' \overset{
    }{\longrightarrow}A \to 0$$ be a small extension. We define $$\operatorname{ob}_\Phi:\operatorname{HS}_k^{A}(R)\to T^1_{R/k}$$
    as follows: Choose $T$, a localization of a polynomial algebra over $k$, and an ideal $I\subset T$ such that $T/I=R.$ For $D\in \operatorname{HS}_k^{A}(R)$, take a lift $E\in \operatorname{HS}_k^{A'}(T)$. Then $E$ induces a $k$-algebra homomorphism $$\varphi_E :T\overset{E|_T}{\to}A'\otimes_k T\to A'\otimes_kR.$$ We have $\varphi_E(I)\subset k\cdot t\otimes_k R, \ \varphi_E(I^2)=0$, and $\varphi_E$ induces $$f_E:I/I^2\to k\cdot t\otimes_k R\simeq R.$$
 We define $\operatorname{ob}_\Phi(D)$ as the image of $f_E$ under the map $\operatorname{Hom}_R(I/I^2,R)\to T^1_{R/k}.$ 
\end{definition}


\begin{proposition} In the situation of Definition \ref{artin obstruction sp M,Ob}, the map $\operatorname{ob}_\Phi$ is a group homomorphism independent of the choice of $T, I, E$. A Hasse-Schmidt derivation $D\in \operatorname{HS}_k^{A}(R)$ extends to an element of $\operatorname{HS}_k^{A'}(R)$ if and only if $\operatorname{ob}_\Phi(D)=0$.
\end{proposition}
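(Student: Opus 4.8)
The plan is to follow the template of Proposition~\ref{obstruction independence} essentially verbatim, replacing the rôle of $\varepsilon^m\cdot R[\varepsilon]/\varepsilon^{m+1}$ by $k\cdot t\otimes_k R$ throughout. The proof splits into three parts: (i) $\operatorname{ob}_\Phi$ is a group homomorphism, (ii) it is independent of the choices of $T$, $I$, $E$, and (iii) the extension criterion. For (i), fix $T,I$ and take $D^1,D^2\in\operatorname{HS}^A(R)$ with lifts $E^1,E^2\in\operatorname{HS}^{A'}(T)$ (which exist by Lemma~\ref{artin smoothness}). Let $J\subset A'\otimes_k T$ be the ideal generated by $t\otimes I$; since $\mathfrak{m}_{A'}\cdot t=0$, multiplication by any element of $\mathfrak{m}_{A'}$ kills $J$, and $(A'\otimes_k T)/J$ decomposes as $T\oplus (k\cdot t\otimes_k R)$ in the appropriate sense (more precisely, as an $A'$-module it is $(A'\otimes_k T)/(t\otimes I)$, which surjects onto $A\otimes_k T$ with kernel $k\cdot t\otimes_k R$). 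Because $E^1,E^2$ are lifts of elements of $\operatorname{HS}^A(R)$, the induced maps $\psi_1,\psi_2\colon A'\otimes_k T\rightrightarrows A'\otimes_k T$ on $T$ send $I$ into $(I, k\cdot t\otimes_k R)$, hence $\psi_i(J)\subset J$, and we get induced maps $\overline\psi_1,\overline\psi_2$ on the quotient. The same four-line computation as in Proposition~\ref{obstruction independence},
$$t\cdot f_{E^1\circ E^2}(a+I^2)=(\overline\psi_1\circ\overline\psi_2)(a)-a=\overline\psi_1\bigl(a+t\cdot f_{E^2}(a+I^2)\bigr)-a=t\cdot\bigl(f_{E^1}+f_{E^2}\bigr)(a+I^2),$$
gives $f_{E^1\circ E^2}=f_{E^1}+f_{E^2}$, hence $\operatorname{ob}_\Phi(D^1\circ D^2)=\operatorname{ob}_\Phi(D^1)+\operatorname{ob}_\Phi(D^2)$.

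For (ii), first fix $T,I$ and let $F\in\operatorname{HS}^{A'}(T)$ be another lift of $D$. Then $\varphi_E,\varphi_F\colon T\rightrightarrows A'\otimes_k R$ agree modulo $k\cdot t\otimes_k R$ (both reduce to the map induced by $D$ into $A\otimes_k R$), so $\varphi_E-\varphi_F$ is a $k$-derivation from $T$ into the square-zero ideal $k\cdot t\otimes_k R\simeq R$, i.e.\ an element of $\operatorname{Hom}_R(\Omega_{T/k}\otimes_T R,R)$. By the defining exact sequence of $T^1_{R/k}$ recalled before Definition~\ref{obstruction sp M,Ob}, this shows $f_E$ and $f_F$ have the same image in $T^1_{R/k}$. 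Changing $T,I$ to $T',I'$: pick $\psi\colon T'\to T$ inducing $T'/I'\simeq T/I$, note $\varphi_E\circ\psi$ is induced by some $E'\in\operatorname{HS}^{A'}(T')$ (lift along the formally smooth $T'$ as in Lemma~\ref{artin smoothness}), and conclude via the commutative square relating $\operatorname{Hom}_R(I/I^2,R)\to T^1_{R/k}$ and $\operatorname{Hom}_R(I'/I'^2,R)\to T^1_{R/k}$ through $\widetilde\psi^\sharp$, exactly as in the cited proposition.

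For (iii), $D\in\operatorname{HS}^A(R)$ extends to $\operatorname{HS}^{A'}(R)$ iff some lift $E\in\operatorname{HS}^{A'}(T)$ can be chosen with $E_i(I)\subset I$-type condition, i.e.\ with $\varphi_E(I)=0$ in $A'\otimes_k R$; since any two lifts differ by a $k$-derivation $T\to k\cdot t\otimes_k R\simeq R$ and modifying $E$ by such a derivation changes $f_E$ by the image of that derivation in $\operatorname{Hom}_R(I/I^2,R)$, we can kill $f_E$ iff its class in $T^1_{R/k}$, namely $\operatorname{ob}_\Phi(D)$, vanishes. I expect the main (minor) obstacle to be bookkeeping around the identification $(A'\otimes_k T)/(t\otimes_k I)$ and the various surjections onto $A\otimes_k T$ and $A'\otimes_k R$ — in particular checking that $\psi_i$ genuinely preserve $J$, which relies on $E^i$ being a lift of an element of $\operatorname{HS}^A(R)$ and not merely an arbitrary element of $\operatorname{HS}^{A'}(T)$ — but structurally everything is parallel to Proposition~\ref{obstruction independence}, with $\mathfrak{m}_{A'}\cdot t=0$ playing the rôle that $\varepsilon\cdot\varepsilon^m=0$ played there. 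Note there is no analogue of the scaling identity $\operatorname{ob}_m(x\bullet D)=x^m\operatorname{ob}_m(D)$ here, since the action $x\bullet$ is unavailable over general artinian rings.
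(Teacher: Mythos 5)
Your overall strategy — transplanting the proof of Proposition~\ref{obstruction independence} verbatim, with $k\cdot t\otimes_k R$ playing the role of $\varepsilon^m\cdot R[\varepsilon]/\varepsilon^{m+1}$ — is exactly the paper's approach, and parts (ii) and (iii) of your sketch are correct as stated. However, in part (i) your choice of $J$ creates a genuine gap.

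You take $J$ to be the ideal generated by $t\otimes I$, which (since $\mathfrak{m}_{A'}\cdot t=0$) is just $k\cdot t\otimes_k I$. The paper instead takes the larger ideal $J=\mathfrak{m}_{A'}\otimes_k I$, giving $(A'\otimes_k T)/J\simeq T\oplus\mathfrak{m}_{A'}\otimes_k R$. With your smaller $J$, the quotient is not $T\oplus(k\cdot t\otimes_k R)$ — it is $T\oplus(\mathfrak{m}_{A'}\otimes_k T)/(k\cdot t\otimes_k I)$, which retains a copy of $T$ (not $R$) in every $\mathfrak{m}_{A'}$-direction other than $t$. Because of this, the equality $\overline\psi_2(a)=a+t\cdot f_{E^2}(a+I^2)$ used in your displayed chain fails: one only has $\overline\psi_2(a)-a-t\cdot f_{E^2}(a+I^2)\in(\mathfrak{m}_{A'}\otimes_k I)/(k\cdot t\otimes_k I)$, which is nonzero in general. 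So the second equality in your display, $(\overline\psi_1\circ\overline\psi_2)(a)-a=\overline\psi_1\bigl(a+t\cdot f_{E^2}(a+I^2)\bigr)-a$, is not valid in your quotient, and the same issue affects the first equality. (You half-acknowledge the problem by writing that the quotient decomposes ``in the appropriate sense,'' but the subsequent computation is carried out as if the decomposition were exact.)

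The easiest fix is the one the paper uses: enlarge $J$ to $\mathfrak{m}_{A'}\otimes_k I$. One checks $\psi_i(J)\subset J$ because $\psi_i(\mathfrak{m}_{A'}\otimes I)=\mathfrak{m}_{A'}\cdot\psi_i(1\otimes I)\subset\mathfrak{m}_{A'}\cdot(A'\otimes I+kt\otimes T)=\mathfrak{m}_{A'}\otimes I$, using $\mathfrak{m}_{A'}\cdot t=0$. With this $J$, the quotient is exactly $T\oplus\mathfrak{m}_{A'}\otimes R$, the extra $\mathfrak{m}_{A'}\otimes I$ terms die, and your chain of equalities holds literally. Alternatively one can salvage your smaller $J$ by projecting one step further to $A'\otimes_k R$ before comparing coefficients of $t$, since the spurious terms live in $\mathfrak{m}_{A'}\otimes_k I$ and vanish there — but as written, the displayed equalities are not correct.

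Finally, your remark that there is no analogue of $\operatorname{ob}_m(x\bullet D)=x^m\operatorname{ob}_m(D)$ is accurate and indeed the paper omits it; the module structure on $\operatorname{Ob}^\Phi_R$ is only recovered later via Theorem~\ref{decomposition thm} and Theorem~\ref{artin obst theorem}.
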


\begin{proof}
We show that $\operatorname{ob}_\Phi$ is a group homomorphism, as in the proof of Proposition \ref{obstruction independence}. The other parts of the proof are also similar. Let $D^1,D^2\in \operatorname{HS}_k^{A}(R)$ be Hasse-Schmidt derivations and let $E^1,E^2\in \operatorname{HS}_k^{A'}(T)$ be their lifts. Let $J\subset A'\otimes_kT$ be the ideal $\mathfrak{m}_{A'}\otimes_k I$. We have $(A'\otimes_kT)/J\simeq k\otimes_k T\oplus \mathfrak{m}_{A'}\otimes_k R.$ Note that $E^1(1\otimes I),E^2(1\otimes I)\subset (1\otimes I,t\otimes T)$ because they are lifts of $A\otimes_k R\to A\otimes_kR$. Thus, $E^1(J),E^2(J)\subset J$ and there are induced morphisms $$\overline{E}^1,\overline{E}^2: (A'\otimes T)/J\rightrightarrows (A'\otimes T)/J$$ from $E^1,E^2$. For $a\in I,$ we have equalities in $(A'\otimes T)/J$:
    \begin{align*}
    t\cdot f_{E^1\circ E^2}(a+I^2)&=(\overline{E}^1\circ \overline{E}^2)(a)-a\\
    &=\overline{E}^1(a+t\cdot f_{E^2}(a+I^2))-a\\
    &=a+t\cdot f_{E^1}(a+I^2)+t\cdot f_{E^2}(a+I^2)-a\\
    &=t\cdot (f_{E^1}+ f_{E^2})(a+I^2). \end{align*}
This implies $f_{E^1\circ E^2}=f_{E^1}+ f_{E^2}$ and $\operatorname{ob}_\Phi(D^1\circ D^2)=\operatorname{ob}_\Phi(D^1)+\operatorname{ob}_\Phi(D^2)$, so that $\operatorname{ob}_\Phi$ is a group homomorphism.\end{proof}

\begin{definition}
  Let $R$ be a $k$-algebra essentially of finite type and $$\Phi: \ 0\to k\cdot t\longrightarrow A' \overset{
    }{\longrightarrow}A \to 0$$ be a small extension.  We define the \emph{obstruction module} along $\Phi$ to be the image of $\operatorname{ob}_\Phi$, which we denote by $\operatorname{Ob}^\Phi_R\subset T^1_{R/k}$. (At this point, it is only a subgroup. We will see that there is a module structure in Theorem \ref{artin obst theorem}.)
\end{definition}

\begin{proposition}
    Let $R$ be a $k$-algebra essentially of finite type and assume that $$\Phi: 0\to k\cdot t\to A'\to A\to 0,$$
    $$\Psi: 0\to k\cdot s\to B'\to B\to 0$$
are small extensions and that we have the following commutative diagram
\[\begin{tikzcd}
	0 & {k\cdot t} & {A'} & A & 0 \\
	0 & {k\cdot s} & {B'} & B & 0
	\arrow[from=1-1, to=1-2]
	\arrow[from=1-2, to=1-3]
	\arrow["{a\cdot}"', from=1-2, to=2-2]
	\arrow[from=1-3, to=1-4]
	\arrow["{g'}"', from=1-3, to=2-3]
	\arrow[from=1-4, to=1-5]
	\arrow["g"', from=1-4, to=2-4]
	\arrow[from=2-1, to=2-2]
	\arrow[from=2-2, to=2-3]
	\arrow[from=2-3, to=2-4]
	\arrow[from=2-4, to=2-5]
\end{tikzcd}\]
where $a\in k$. Then for any $D\in \operatorname{HS}_k^{A}(R),$ we have
$$a\cdot \operatorname{ob}_{\Phi}(D)=\operatorname{ob}_{\Psi}(g_*(D)).$$
\end{proposition}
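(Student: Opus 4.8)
The plan is to compute $\operatorname{ob}_\Psi(g_*(D))$ directly from its definition and match it with $a\cdot \operatorname{ob}_\Phi(D)$, using the same choice of $T$ and $I$ for both sides. Fix $T$, a localization of a polynomial algebra, and an ideal $I\subset T$ with $T/I=R$. Pick a lift $E\in \operatorname{HS}^{A'}(T)$ of $D\in \operatorname{HS}^A(R)$. The key observation is that $g'_*(E)\in \operatorname{HS}^{B'}(T)$ is then a lift of $g_*(D)\in \operatorname{HS}^B(R)$: indeed $g'_*(E)$ reduces modulo $\mathfrak{m}_{B'}$ (or rather, along $B'\to B$) to $g_*(E\bmod \mathfrak{m}_{A'}) = g_*(D)$, because the diagram is commutative and pushforward is functorial in the artinian argument. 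So I may use $g'_*(E)$ to compute $\operatorname{ob}_\Psi(g_*(D))$.

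Next I would track the map $\varphi$ through the diagram. By definition $\varphi_E:T\xrightarrow{E|_T} A'\otimes_k T\to A'\otimes_k R$, and $\varphi_E(I)\subset k\cdot t\otimes_k R$, giving $f_E:I/I^2\to k\cdot t\otimes_k R\simeq R$. Applying $g'\otimes \operatorname{id}_R$ yields $\varphi_{g'_*(E)} = (g'\otimes \operatorname{id}_R)\circ\varphi_E: T\to B'\otimes_k R$, again because $g'_*(E)|_T = (g'\otimes\operatorname{id}_T)\circ (E|_T)$. Restricting to $I$, the square $\varphi_{g'_*(E)}(I)\subset k\cdot s\otimes_k R$ is governed by $g'|_{k\cdot t}$, which by the left square of the diagram is multiplication by $a$, i.e. $g'(t)=a\cdot s$. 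Under the identifications $k\cdot t\otimes_k R\simeq R$ and $k\cdot s\otimes_k R\simeq R$ (sending $t\mapsto 1$ and $s\mapsto 1$ respectively), the induced map $k\cdot t\otimes_k R\to k\cdot s\otimes_k R$ is precisely multiplication by $a$. Hence $f_{g'_*(E)} = a\cdot f_E$ as elements of $\operatorname{Hom}_R(I/I^2,R)$, and passing to $T^1_{R/k}$ gives $\operatorname{ob}_\Psi(g_*(D)) = a\cdot \operatorname{ob}_\Phi(D)$.

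The only genuinely delicate point is the compatibility of the identifications: one must check that the isomorphism $k\cdot t\otimes_k R\simeq R$ used in Definition \ref{artin obstruction sp M,Ob} is the one sending $t\otimes r\mapsto r$, that the same convention is used for $s$, and that therefore $g'|_{k\cdot t}$, which sends $t\mapsto a s$ by the commutativity of the left square, becomes multiplication by $a$ on $R$ under these identifications. This is a bookkeeping matter rather than a conceptual obstacle, but it is where sign/scalar errors would creep in, so I would spell it out carefully. Everything else — functoriality of $(-)_*$ in the artinian argument, the fact that pushforward of a lift is a lift, and the well-definedness of $\operatorname{ob}_\Phi$ and $\operatorname{ob}_\Psi$ (already established) — is routine and can be invoked directly.
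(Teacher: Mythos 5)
Your proposal is correct and follows essentially the same route as the paper: pick one $T$ and $I$ for both sides, use $g'_*(E)$ as a lift of $g_*(D)$, chase $\varphi_E$ through $g'\otimes\operatorname{id}$, and observe that on $I/I^2$ the left square forces $f_{g'_*(E)}=a\cdot f_E$. The "delicate bookkeeping" you flag is exactly what the paper records in its two commutative diagrams.
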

\begin{proof}
    In the notation of Definition \ref{artin obstruction sp M,Ob}, $g'_*(E)\in\operatorname{HS}_k^{B'}(T)$ is a lift of $g_*(D)\in\operatorname{HS}_k^{B}(R)$. We have the following commutative diagram
\[\begin{tikzcd}
	{\varphi_E:} & T & {A'\otimes_k T} & {A'\otimes_k R} \\
	{\varphi_{g'_*(E)}:} & T & {B'\otimes_k T} & {B'\otimes_k R}
	\arrow["{E|_T}", from=1-2, to=1-3]
	\arrow[equals, from=1-2, to=2-2]
	\arrow[from=1-3, to=1-4]
	\arrow["{g'\otimes_k \operatorname{id}_T}"', from=1-3, to=2-3]
	\arrow["{g'\otimes_k \operatorname{id}_R}"', from=1-4, to=2-4]
	\arrow["{g'_*(E)|_T}"', from=2-2, to=2-3]
	\arrow[from=2-3, to=2-4]
\end{tikzcd}\]
This diagram restricts to
\[\begin{tikzcd}
	{f_E:} & {I/I^2} & {(k\cdot t)\otimes_k R} & R \\
	{f_{g'_*(E)}:} & {I/I^2} & {(k\cdot s)\otimes_k R} & R
	\arrow[from=1-2, to=1-3]
	\arrow[equals, from=1-2, to=2-2]
	\arrow["\simeq", from=1-3, to=1-4]
	\arrow["{a\cdot}"', from=1-3, to=2-3]
	\arrow["{a\cdot}"', from=1-4, to=2-4]
	\arrow[from=2-2, to=2-3]
	\arrow["\simeq"', from=2-3, to=2-4]
\end{tikzcd}\]
and we have $a\cdot f_E=f_{g'_*(E)}$. This proves the assertion.
\end{proof}

\begin{definition}\label{i Phi}
    Let $\Phi: \ 0\to k\cdot t\longrightarrow A' \overset{
    }{\longrightarrow}A \to 0$ be a small extension. Let $S\subset \mathbb{Z}$ be the set consisting of positive integers $n$ for which there exists $x\in \mathfrak{m}_{A'}$ with $x^n\in k\cdot t$ and $x^n\neq 0$. Define $i(\Phi)$ as the largest integer $i$ such that $p^i$ divides $n$ for some $n\in S$. In other words, it is the largest integer $i$ for which there exists $x\in \mathfrak{m}_{A'}$ with $x^{p^i}=t$.
\end{definition}
\begin{remark}
    For $x\in \mathfrak{m}_{A'}$ such that $x^n\in k\cdot t$ and $x^n\neq 0$, we have the following commutative diagram
\[\begin{tikzcd}
	0 & {k\cdot \varepsilon^n} & {k[\varepsilon]/\varepsilon^{n+1}} & {k[\varepsilon]/\varepsilon^{n}} & 0 \\
	0 & {k\cdot t} & {A'} & A & 0
	\arrow[from=1-1, to=1-2]
	\arrow[from=1-2, to=1-3]
	\arrow["{a\cdot}"', from=1-2, to=2-2]
	\arrow[from=1-3, to=1-4]
	\arrow["f", from=1-3, to=2-3]
	\arrow[from=1-4, to=1-5]
	\arrow[from=1-4, to=2-4]
	\arrow[from=2-1, to=2-2]
	\arrow[from=2-2, to=2-3]
	\arrow[from=2-3, to=2-4]
	\arrow[from=2-4, to=2-5]
\end{tikzcd}\]
where $f$ is defined by $\varepsilon \mapsto x$ and $a\neq 0$. As $k= k^p=k^{p^2}=\dots,$ we have
$$\operatorname{Ob}_R^n=a\cdot\operatorname{Ob}_R^n\subset \operatorname{Ob}_R^{\Phi}.$$ In particular, for $q=p^{i(\Phi)}$, we have $$\operatorname{Ob}_R^{q}\subset \operatorname{Ob}_R^{\Phi}.$$
\end{remark}

\begin{definition}\label{r def}
    Let $A$ be an object of $\mathcal{A}$ and $x\in \mathfrak{m}_A$. Define $r(A,x)$ as the largest integer $r$ such that $x^r\neq0$. If $x=0$, set $r(A,x)=0.$
\end{definition}

\begin{definition}\label{bracket def}
    Let $A$ be an object of $\mathcal{A}$, $x\in \mathfrak{m}_A$ and $R$ be a $k$-algebra. Let $n\ge r(A,x)$ be an integer and $f: k[\varepsilon]/\varepsilon^{n+1}\to A$ be given by $\varepsilon \mapsto x$. Define
    $$\langle x \rangle:\operatorname{HS}_k^{n}(R)\to \operatorname{HS}_k^A(R)$$ by
    $$D\langle x \rangle:=f_*(D)\in \operatorname{HS}_k^A(R).$$
    Suppose that, in addition, $\Phi: \ 0\to k\cdot x\longrightarrow A \overset{
    }{\longrightarrow}A/x \to 0$ is a small extension and $d\in \operatorname{Der}_k(R,R).$ Using the fact that $\widetilde{d}:=(\operatorname{id}, d)$ is an element of $\operatorname{HS}_k^1(R)$, we define
    $$d\langle x \rangle:=\widetilde{d}\langle x \rangle\in \operatorname{HS}_k^{A}(R).$$
\end{definition}
\begin{remark}
    This notation is compatible with that of Definition \ref{Hasse Schmidt}: If $m\ge 2$, $A=k[\varepsilon]/\varepsilon^{mn+1}$ and $x=\varepsilon^m$, then we can identify $D\langle \varepsilon^m \rangle$ and $D[m]$.
\end{remark}

\begin{lemma}\label{bracket lem}
Let $\Phi: \ 0\to k\cdot t\longrightarrow A' \overset{
    f}{\longrightarrow}A \to 0$ be a small extension and let $R$ be a $k$-algebra. Then, for $D\in \operatorname{HS}_k^{A'}(R)$, we have $D=d\langle t \rangle$ for some $  d\in \operatorname{Der}_k(R,R)$ if and only if $f_*(D)=\operatorname{id}$.
    Moreover, $d\langle t \rangle$ is in the center of $\operatorname{HS}_k^{A'}(R).$
\end{lemma}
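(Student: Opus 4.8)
The plan is to work throughout with the description of $\operatorname{HS}^{A'}(R)$ via restriction to $R$: an element $D\in\operatorname{HS}^{A'}(R)$ is an $A'$-algebra endomorphism of $A'\otimes_k R$, hence is determined by $D|_R\colon R\to A'\otimes_k R$, and the requirement that $D$ induce $\operatorname{id}$ modulo $\mathfrak m_{A'}$ says precisely that $D|_R(r)$ maps to $1\otimes r$ in $(A'/\mathfrak m_{A'})\otimes_k R\simeq R$. By construction $d\langle t\rangle$ is the element with $d\langle t\rangle|_R(r)=1\otimes r+t\otimes d(r)$. I would use two facts repeatedly: (i) $t\in\mathfrak m_{A'}$, so $t^2=0$ and $\mathfrak m_{A'}\cdot t=0$; (ii) $R$ is free over the field $k$, so tensoring $\Phi$ with $R$ over $k$ gives a short exact sequence $0\to(k\cdot t)\otimes_k R\to A'\otimes_k R\xrightarrow{\,f\otimes\operatorname{id}\,}A\otimes_k R\to 0$, and $(k\cdot t)\otimes_k R\simeq R$ via $t\otimes r\mapsto r$.

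For the equivalence: if $D=d\langle t\rangle$, then $(f\otimes\operatorname{id})\circ D|_R$ sends $r\mapsto 1\otimes r+f(t)\otimes d(r)=1\otimes r$ because $f(t)=0$, so $f_*(D)$ is the canonical inclusion, i.e. $f_*(D)=\operatorname{id}$. Conversely, suppose $f_*(D)=\operatorname{id}$. Then for each $r$, $D|_R(r)-1\otimes r$ lies in $\ker(f\otimes\operatorname{id})=(k\cdot t)\otimes_k R$, so there is a unique $d(r)\in R$ with $D|_R(r)=1\otimes r+t\otimes d(r)$. Since $D|_R$ and $1\otimes(-)$ are $k$-linear, so is $d$. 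Expanding $D|_R(r_1r_2)=D|_R(r_1)\,D|_R(r_2)$ and using $t^2=0$ gives $D|_R(r_1r_2)=1\otimes r_1r_2+t\otimes\bigl(r_1d(r_2)+r_2d(r_1)\bigr)$, so uniqueness of $d$ forces the Leibniz rule; hence $d\in\operatorname{Der}_k(R,R)$. Finally $D$ and $d\langle t\rangle$ are $A'$-algebra endomorphisms of $A'\otimes_k R$ agreeing on the generating set $1\otimes R$, hence equal.

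For centrality I would fix $E\in\operatorname{HS}^{A'}(R)$ and show $E\circ d\langle t\rangle$ and $d\langle t\rangle\circ E$ agree on $1\otimes r$ for all $r$. On the one hand $(E\circ d\langle t\rangle)(1\otimes r)=E(1\otimes r)+t\cdot E(1\otimes d(r))$; writing $E(1\otimes d(r))=1\otimes d(r)+m$ with $m\in\mathfrak m_{A'}\otimes_k R$ and using $t\cdot\mathfrak m_{A'}=0$, this equals $E(1\otimes r)+t\otimes d(r)$. On the other hand, writing $E(1\otimes r)=\sum_j a_j\otimes r_j$, $A'$-linearity gives $(d\langle t\rangle\circ E)(1\otimes r)=\sum_j a_j\bigl(1\otimes r_j+t\otimes d(r_j)\bigr)=E(1\otimes r)+\sum_j(a_jt)\otimes d(r_j)$; decomposing $a_j=c_j+m_j$ with $c_j\in k$ and $m_j\in\mathfrak m_{A'}$ yields $a_jt=c_jt$, and $\sum_j c_jr_j=r$ since $E(1\otimes r)$ maps to $r$ in $(A'/\mathfrak m_{A'})\otimes_k R\simeq R$, whence $\sum_j(a_jt)\otimes d(r_j)=t\otimes d(r)$. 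Both composites therefore send $1\otimes r$ to $E(1\otimes r)+t\otimes d(r)$, proving $d\langle t\rangle$ is central. The step needing the most care will be this last bookkeeping of which terms are annihilated by $t$ together with tracking the ``constant part'' of $E(1\otimes r)$; everything else is a formal manipulation with the restriction description.
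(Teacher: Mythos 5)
Your proof is correct and follows the same strategy as the paper: establish the equivalence by working with the restriction $D|_R$ and then prove centrality by checking both composites on $1\otimes R$, using that $t$ annihilates $\mathfrak m_{A'}$. The only cosmetic difference is that for the composite $d\langle t\rangle\circ E$ you unpack $E(1\otimes r)$ into tensors and track the constant part explicitly, whereas the paper phrases the same point more compactly by observing that $d\langle t\rangle$ acts as the identity on $\mathfrak m_{A'}\otimes R$.
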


\begin{proof}Let $D\in \operatorname{HS}_k^{A'}(R)$ and suppose that $f_*(D)=\operatorname{id}$. There exists a map $d:R\to R$ such that
$$D|_R(x)=1\otimes x + t\otimes d(x),\ x\in R.$$ We can see that the axioms of a $k$-derivation hold for $d$ and we have $D=d\langle t \rangle$. Conversely, for every $d\in \operatorname{Der}_k(R,R)$, we have $f(d\langle t \rangle)=d\langle 0\rangle=\operatorname{id}$. Now, let us show that $d\langle t \rangle$ is in the center of $\operatorname{HS}_k^{A'}(R).$ Let us take any $D'\in \operatorname{HS}_k^{A'}(R)$. It suffices to check that $D'\circ d\langle t \rangle=d\langle t \rangle\circ D'$ over $1\otimes R$ since both sides are $A'$-linear. For $x\in R$,
\begin{align*}D'\circ d\langle t \rangle(1\otimes x)&=D'(1\otimes x+t\otimes d(x))\\ &=D'(1\otimes x)+t\cdot D'(1\otimes d(x))\\ &=D'(1\otimes x)+t\otimes d(x).\end{align*}
The last equality is because $D'$ is the identity up to $\mathfrak{m}_{A'}$. On the other hand, as $d\langle t \rangle$ is the identity over $\mathfrak{m}_{A'}\otimes R$,
$$d\langle t \rangle(D'(1\otimes x)-1\otimes x)=D'(1\otimes x)- 1\otimes x$$
and
$$d\langle t \rangle(D'(1\otimes x))=D'(1\otimes x)+t\otimes d(x)$$
so that $D'\circ d\langle t \rangle=d\langle t \rangle\circ D'$.
\end{proof}

\begin{definition}\label{Ae lemma}
    Let $n\ge 0$, and let $e=(e_1,\dots, e_n)$ be a sequence of positive integers and $a=(a_1,\dots,a_n)$ be a sequence in $k$. We define objects $A(e)$ and $A(e,a)$ in $\mathcal{A}$ as follows: Let $k[X_1,\dots,X_n,T]$ be a polynomial ring in $n+1$ variables and let
    $$I_1:=(X_1,\dots,X_n), \ I_2:=(X_1^{e_1},\dots,X_n^{e_n})$$ be ideals of $k[X_1,\dots,X_n,T]$ and let $$f_i:=X_i^{e_i}-a_iT\in k[X_1,\dots,X_n,T].$$ Define
    $$A(e):=k[X_1,\dots,X_n,T]/(I_2,T),$$
    $$A(e,a):=k[X_1,\dots,X_n,T]/(I_1I_2,T^2,TI_1, f_1,\dots ,f_n).$$
\end{definition}

\begin{remark}\label{basis rem}
    Let $J_1=(X_1,\dots,X_n),J_2=(X_1^{e_1},\dots, X_n^{e_n})$ be ideals of $P:=k[X_1,\dots,X_n].$ Then, we have $P/J_2 \simeq A(e)$ and $\{X^\lambda :=X_1^{\lambda_1} \dots X_n^{\lambda_n} | \ 0\le \lambda_i < e_i\}$ is a $k$-linear basis of $A(e)$. We have the following push-out diagram
\[\begin{tikzcd}
	0 & {k\cdot X_1^{e_1}+\dots+k\cdot X_n^{e_n}} & {P/J_1J_2} & {P/J_2} & 0 \\
	0 & {k\cdot T} & {A(e,a)} & {A(e)} & 0
	\arrow[from=1-1, to=1-2]
	\arrow[from=1-2, to=1-3]
	\arrow["g"', from=1-2, to=2-2]
	\arrow[from=1-3, to=1-4]
	\arrow[from=1-3, to=2-3]
	\arrow[from=1-4, to=1-5]
	\arrow["\simeq"', from=1-4, to=2-4]
	\arrow[from=2-1, to=2-2]
	\arrow[from=2-2, to=2-3]
	\arrow[from=2-3, to=2-4]
	\arrow[from=2-4, to=2-5]
\end{tikzcd}\]
whose rows are exact, where $g$ is defined by $X_i^{e_i}\mapsto a_iT$. In particular, we have $T\neq 0$ in $A(e,a)$. It follows that $\{T \} \cup \{X^\lambda | \ 0\le \lambda_i < e_i\}$ forms a $k$-linear basis of $A(e,a).$
\end{remark}

\begin{definition} \label{cube def}Let $n\ge1$ and $e=(e_1,\dots, e_n)$ be a sequence of positive integers.  Define 
$$C=C(e):=\{(\lambda_1,\dots ,\lambda_n)\in \mathbb{Z}^n|0\le \lambda_i<e_i \mathrm{\ for \ each \ } i\}$$
and regard $C\subset \mathbb{R}^n$. Let $V=V_n\subset \mathbb{R}^n$ be the hyperplane defined by $x_n=0$. Define a sequence $l_1,\dots, l_N$ of lines (one-dimensional $\mathbb{R}$-vector space) in $\mathbb{R}^n$ to be such that
    \begin{enumerate}
    \item $C=(C\cap V) \cup \bigcup_{i=1}^N(C\cap l_i),$
    \item $l_i\neq l_j$ if $i\neq j$,
    \item  for each $i$, we have $\sharp (C\cap l_i)>1$  (in other words, $C\cap l_i$ contains points other than the origin),
    \item for each $i$, $l_i$ is not contained in $V$ and
    \item if $l_i$ and the hyperplane $\{x_n=1\}$ intersects at $P_i:=(P_i^1,\dots,P_i^{n-1},1)$, then the $P_i$ are in the lexicographic order, i.e., if $1\le i<j\le N$, then there exists $1\le n_0 \le n-1$ such that $P_i^1=P_j^1,\dots,P_i^{n_0-1}=P_j^{n_0-1}$ and $P_i^{n_0}<P_j^{n_0}$.
    \end{enumerate}
Note that the above conditions determine the $l_i$ uniquely. Define $C_0:=C$ and $C_0\supset C_1\supset \dots \supset C_{N}=(C\cap V)$ by $C_{i+1}=(C_i\setminus l_{i+1})\cup \{0\}$. Define $A_i(e)\subset A(e)$ be the sub-$k$-space whose basis is $\{X^\lambda=X_1^{\lambda_1}\dots X_n^{\lambda_n}|\lambda=(\lambda_1,\dots \lambda_n)\in C_i\}.$

\end{definition}

\begin{lemma}\label{subring lemma} In the situation of Definition \ref{cube def}, 
    for every $0\le i\le N$, $A_i(e)$ is a subring of $A(e)$ and in particular is an object of $\mathcal{A}$. 
\end{lemma}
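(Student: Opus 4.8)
The plan is to fix $0 \le i \le N$ and show that $A_i(e)$ is closed under multiplication in $A(e)$; since $A_i(e)$ contains $X^0 = 1$ (the origin lies in every $C_j$), this gives a subring, and a subring of an object of $\mathcal{A}$ is again an object of $\mathcal{A}$. Because multiplication in $A(e) = k[X_1,\dots,X_n]/(X_1^{e_1},\dots,X_n^{e_n})$ is $k$-bilinear, it suffices to check that for any two monomials $X^\lambda, X^\mu$ with $\lambda,\mu \in C_i$, the product $X^{\lambda+\mu}$ is either zero in $A(e)$ (i.e. $\lambda_j + \mu_j \ge e_j$ for some $j$) or else $\lambda + \mu \in C_i$. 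So the whole lemma reduces to the combinatorial statement: if $\lambda, \mu \in C_i$ and $\lambda + \mu \in C = C_0$, then $\lambda + \mu \in C_i$.

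I would prove this by descending induction on $i$, or rather by peeling off one line at a time: it is enough to show that for each step $C_{j} \supset C_{j+1} = (C_j \setminus l_{j+1}) \cup \{0\}$, the set $C_{j+1}$ inherits the closure property from $C_j$. Concretely, suppose $\lambda, \mu \in C_{j+1}$ and $\nu := \lambda + \mu \in C$. By the inductive hypothesis $\nu \in C_j$, so I must rule out $\nu \in l_{j+1} \setminus \{0\}$. The key geometric observation is that $l_{j+1}$ is a line through the origin, so $l_{j+1} \setminus \{0\}$ has a well-defined "direction," and the points of $C \cap l_{j+1}$ are the lattice points $0, v, 2v, \dots$ for the primitive vector $v$ in that direction (intersected with the box $C$). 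If $\nu = \lambda + \mu$ lies on $l_{j+1}$, I want to conclude that $\lambda$ and $\mu$ also lie on $l_{j+1}$, contradicting $\lambda, \mu \in C_{j+1}$ unless one of them is $0$ — and if, say, $\lambda = 0$ then $\nu = \mu$, but $\mu \in C_{j+1}$ means $\mu \notin l_{j+1}$ (unless $\mu = 0$, in which case $\nu = 0 \in C_{j+1}$ and we are done). So the crux is: $\lambda + \mu \in l_{j+1}$, $\lambda,\mu \in C$, $\lambda + \mu \ne 0$ $\Rightarrow$ $\lambda, \mu \in l_{j+1}$.

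To see this last implication, note that the $n$-th coordinate controls everything. If $\nu = \lambda + \mu \in l_{j+1}$, then since $l_{j+1} \not\subset V$ we have $\nu_n \ge 1$, so at least one of $\lambda_n, \mu_n$ is positive; relabel so $\lambda_n \ge 1$. The line $l_{j+1}$ meets $\{x_n = 1\}$ in a single point $P_{j+1}$, so every point of $l_{j+1}$ with $x_n$-coordinate equal to $c$ is $c \cdot P_{j+1}$. Thus $\nu = \nu_n \cdot P_{j+1}$. The candidate point $\lambda_n \cdot P_{j+1}$ lies on $l_{j+1}$ and has the same $n$-th coordinate as $\lambda$; I must argue $\lambda = \lambda_n \cdot P_{j+1}$, i.e. that $\lambda$ already lies on $l_{j+1}$. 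This is where the box structure and the specific defining property of $C_j$ enter: by construction $C_j$ is obtained from $C_0 = C$ by removing whole lines $l_1, \dots, l_j$, and each $l_\ell$ is determined canonically; I would show by induction that each $C_j$ has the property that whenever $\xi \in C_j$ and $\xi, \xi' \in C$ lie on a common line through $0$ with $0 \le \xi'_n \le \xi_n$ (coordinatewise "below" $\xi$ along that ray), then $\xi' \in C_j$ — i.e. $C_j$ is a union of initial segments of the rays through its points. Granting this "ray-closedness" of $C_j$, from $\nu \in C_j \cap l_{j+1}$ and $0 \le \lambda_n \le \nu_n$ we would get $\lambda_n P_{j+1} \in C_j$; but then $\lambda$ and $\lambda_n P_{j+1}$ are two points of $C$ with the same $n$-th coordinate, and using that the ordering condition (5) makes the decomposition into lines unambiguous, one forces $\lambda = \lambda_n P_{j+1} \in l_{j+1}$, and symmetrically for $\mu$. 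That contradicts $\lambda, \mu \in C_{j+1} = C_j \setminus l_{j+1}$ unless $\lambda_n = 0$ or $\mu_n = 0$, which are the trivial cases handled above.

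The main obstacle I anticipate is precisely the bookkeeping in the last paragraph: making rigorous the claim that each $C_j$ is "ray-closed" and that a lattice point of $C$ is pinned down by its $n$-th coordinate together with the line of $C(e)$ it lies on. This needs an honest induction on $j$ using conditions (1)--(5) of Definition \ref{cube def} — in particular the lexicographic ordering (5), which guarantees that removing $l_1,\dots,l_j$ in that order never strands a point of a not-yet-removed ray — rather than any clever trick. Once that combinatorial invariant is set up, the ring-theoretic conclusion (closure under the truncated-polynomial multiplication, hence subring, hence object of $\mathcal{A}$) is immediate.
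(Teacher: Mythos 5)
Your reduction is sound: since the monomials $X^\lambda$, $\lambda\in C_i$, form a $k$-basis of $A_i(e)$, and $0\in C_i$, the lemma reduces to the combinatorial claim that if $\lambda,\mu\in C_i$ and $\lambda+\mu\in C$ then $\lambda+\mu\in C_i$; your peel-one-line-at-a-time induction is a reasonable way to organize that claim, and the statement you want at each step ($\lambda,\mu\in C_{j+1}$, $\nu:=\lambda+\mu\in C$ and $\nu\ne0$ implies $\nu\notin l_{j+1}$) is correct.

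The genuine gap is in the justification of that step. You get as far as ``$\lambda$ and $\lambda_n P_{j+1}$ are two points of $C$ with the same $n$-th coordinate, and using the ordering condition (5)\dots one forces $\lambda=\lambda_n P_{j+1}$,'' but this is a non sequitur: many lattice points of $C$ share an $n$-th coordinate (e.g.\ $(1,1)$ and $(2,1)$ when $n=2$), and ``ray-closedness'' of $C_j$ (which is just the trivial observation that $C_j$ is a union of whole lines intersected with $C$) gives you no relation between $\lambda$, which lies on some line $l_s$, and $\lambda_n P_{j+1}$, which lies on $l_{j+1}$. Nothing you have set up forces $s=j+1$. The invariant you actually need is not ``pinned down by line and $n$-th coordinate'' but the \emph{slope description} of $C_j$: a point $\xi\in C$ with $\xi_n>0$ lies in $C_j$ iff its slope vector $(\xi_1/\xi_n,\dots,\xi_{n-1}/\xi_n)$ is lexicographically $\ge P_{j+1}$, and lies in $C_{j+1}$ iff the inequality is strict. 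The point of this description is that the slope of $\lambda+\mu$ is a strict convex combination of the slopes of $\lambda$ and $\mu$ (when both $\lambda_n,\mu_n>0$), and strict convex combinations preserve the lexicographic inequality with equality only if both endpoints are equal; this is exactly what rules out $\nu\in l_{j+1}$. (You also need the separate easy case $\mu_n=0$: then the slope of $\nu$ coordinatewise dominates that of $\lambda$, hence is lex-strictly above $P_{j+1}$.) This is what the paper's proof does, packaged differently: it lifts the picture to $\mathbb{N}^n$, defines the cone $\Lambda_i$ of points with slope lex $\ge P_{i+1}$ together with the hyperplane $V\cap\mathbb{N}^n$, checks the additivity $\Lambda_i+\Lambda_i\subset\Lambda_i$ etc.\ by the same convexity of lex order, and observes that $C_i=(\Lambda_i\cup(\mathbb{N}^n\cap V))\cap C$. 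Replacing your ``ray-closedness / same $n$-th coordinate'' step with this slope-convexity argument would repair the proof; as written, the crucial implication does not hold.
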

\begin{proof}
    The case $i=N$ is trivial. Let us fix $0\le i< N$. Let $\mathbb{N}=\{0,1,\dots\}$ and regard $\mathbb{N}^n\subset \mathbb{R}^n$. Let $\Lambda_i$ be the subset of $\mathbb{N}^n$ consisting of $(\lambda_1,\dots,\lambda_n)$ such that $\lambda_n\neq 0 $ and $$(P_{i+1}^1,\dots, P_{i+1}^{n-1})\le (\frac{\lambda_1}{\lambda_n},\dots,\frac{\lambda_{n-1}}{\lambda_n}) \mathrm{\ in \ the \ lexicographic \ order. } $$
    If $(\lambda_1,\dots,\lambda_n),(\lambda_1',\dots,\lambda_n')\in \Lambda_i$, then we have $$(\lambda_1'',\dots,\lambda_n''):=(\lambda_1,\dots,\lambda_n)+(\lambda_1',\dots,\lambda_n')\in \Lambda_i$$ because $(\frac{\lambda_1''}{\lambda_n''},\dots,\frac{\lambda_{n-1}''}{\lambda_n''})$ lies on the line segment between $(\frac{\lambda_1}{\lambda_n},\dots,\frac{\lambda_{n-1}}{\lambda_n})$ and $(\frac{\lambda_1'}{\lambda_n'},\dots,\frac{\lambda_{n-1}'}{\lambda_n'})$. Thus, we have $\Lambda_i +\Lambda_i\subset \Lambda_i\subset \Lambda_i\cup (\mathbb{N}^n\cap V).$ We also have $(\mathbb{N}^n\cap V) +(\mathbb{N}^n\cap V)\subset \Lambda_i  \cup (\mathbb{N}^n\cap V)$ and $\Lambda_i+(\mathbb{N}^n\cap V)\subset \Lambda_i  \cup (\mathbb{N}^n\cap V)$. This implies that the sub-$k$-space $R_i$ of $k[X_1,\dots,X_n]$ spanned by $X^\lambda=X_1^{\lambda_1}\dots X_n^{\lambda_n},\lambda=(\lambda_1,\dots \lambda_n)\in \Lambda_i \cup (\mathbb{N}^n\cap V) $ is a subring. The image of $R_i$ under $k[X_1,\dots,X_n]\twoheadrightarrow A(e)$ is $A_i(e)$ and this is also a subring of $A(e)$.
\end{proof}

\begin{lemma}\label{ideal lem} In the situation of Definition \ref{cube def}, let us fix $0\le i< N$ and let $B_i$ be the sub-$k$-space of $A_i(e)$ whose basis is $\{X^\lambda=X_1^{\lambda_1}\dots X_n^{\lambda_n}|\lambda=(\lambda_1,\dots \lambda_n)\in (C_i\setminus l_{i+1})\}$. Then $B_i$ is an ideal of $A_i(e)$.
\end{lemma}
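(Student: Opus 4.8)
The plan is to imitate the proof of Lemma~\ref{subring lemma}, keeping its notation. Write $W := \mathbb{N}^n \cap V$ and, for $\lambda \in \mathbb{N}^n$ with $\lambda_n \neq 0$, set $\bar\lambda := (\lambda_1/\lambda_n, \dots, \lambda_{n-1}/\lambda_n)$; let $\preceq$ be the lexicographic order on $\mathbb{R}^{n-1}$ and put $Q := (P_{i+1}^1, \dots, P_{i+1}^{n-1})$, so that $\Lambda_i = \{\lambda : \lambda_n \neq 0,\ Q \preceq \bar\lambda\}$ and $R_i \subseteq k[X_1,\dots,X_n]$ is the subring spanned by the $X^\lambda$, $\lambda \in \Lambda_i \cup W$, which surjects onto $A_i(e)$. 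First I would introduce the strict analogue $\Lambda_i' := \{\lambda : \lambda_n \neq 0,\ Q \prec \bar\lambda\}$ and let $\widetilde B_i \subseteq R_i$ be the $k$-span of $\{X^\lambda : \lambda \in \Lambda_i' \cup (W\setminus\{0\})\}$. The key observation is that the image of $\widetilde B_i$ under $k[X_1,\dots,X_n]\twoheadrightarrow A(e)$ is exactly $B_i$: by the same bookkeeping that rewrites $C_i$ as $(C\cap V)\cup(C\cap\Lambda_i)$, one has $(\Lambda_i'\cup(W\setminus\{0\}))\cap C = C_i \setminus l_{i+1}$, because the points of $C_i$ lying on $l_{i+1}$ are precisely $0$ together with the $\lambda$ with $\bar\lambda = Q$, and removing them turns the condition $Q\preceq\bar\lambda$ into $Q\prec\bar\lambda$. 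So it is enough to show that $\widetilde B_i$ is an ideal of $R_i$ and then invoke that a ring surjection carries ideals to ideals.

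Since $\widetilde B_i$ and $R_i$ are spanned by monomials, $\widetilde B_i$ being an ideal of $R_i$ reduces to the additive inclusion $S + (\Lambda_i \cup W) \subseteq S$ where $S := \Lambda_i' \cup (W\setminus\{0\})$; moreover, for each of the four pieces $\Lambda_i'+\Lambda_i$, $\Lambda_i'+W$, $(W\setminus\{0\})+\Lambda_i$, $(W\setminus\{0\})+W$, which summand of $S$ a given sum must land in is forced by whether its $n$-th coordinate vanishes, so only that one membership is at issue. I would record the elementary properties of $\preceq$ (translation invariance, invariance under multiplication by positive reals, a coordinatewise-nonnegative nonzero vector is $\succ 0$, and $x \succeq 0$, $y \succ 0$ imply $x+y\succ 0$), which at once yield: (i) a convex combination with strictly positive weights of a vector $\succ c$ and a vector $\succeq c$ is $\succ c$; and (ii) adding a coordinatewise-nonnegative vector to a vector $\succ c$ keeps it $\succ c$, and it upgrades $\succeq c$ to $\succ c$ when the added vector is moreover nonzero. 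Then $\Lambda_i'+\Lambda_i\subseteq\Lambda_i'$ because $\overline{\lambda+\mu} = \tfrac{\lambda_n}{\lambda_n+\mu_n}\bar\lambda + \tfrac{\mu_n}{\lambda_n+\mu_n}\bar\mu$, to which (i) applies; $(W\setminus\{0\})+\Lambda_i\subseteq\Lambda_i'$ and $\Lambda_i'+W\subseteq\Lambda_i'$ because $\overline{\lambda+\mu}$ equals $\bar\mu$, resp.\ $\bar\lambda$, plus a coordinatewise-nonnegative vector that is nonzero in the first case (as $\lambda\neq 0$, $\lambda_n=0$), to which (ii) applies; and $(W\setminus\{0\})+W\subseteq W\setminus\{0\}$ is immediate. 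This gives $S + (\Lambda_i\cup W)\subseteq S$, so $\widetilde B_i$ is an ideal of $R_i$; since $A_i(e)$ is the image of $R_i$ under $k[X_1,\dots,X_n]\twoheadrightarrow A(e)$ and $B_i$ is the image of $\widetilde B_i$, we conclude that $B_i$ is an ideal of $A_i(e)$.

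The one genuinely delicate point — and the reason for working with the strict set $\Lambda_i'$ rather than simply reusing $\Lambda_i$ — is making the \emph{strict} lexicographic inequality propagate through these sums; this is exactly where the hypothesis $\lambda\notin l_{i+1}$ (equivalently $\bar\lambda\neq Q$) is used, via the strict-convexity fact (i) and the nonzero-upgrade in (ii). Apart from stating (i)--(ii) carefully, everything is the same combinatorics already carried out in the proof of Lemma~\ref{subring lemma}, so I do not expect any substantive obstacle.
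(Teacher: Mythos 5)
Your proof is correct and follows essentially the same route as the paper's: pass to the sub-$k$-space $\widetilde B_i\subseteq R_i$ spanned by monomials with exponents in $(\Lambda_i\cup(\mathbb{N}^n\cap V))\setminus l_{i+1}$, check it is a monomial ideal of $R_i$ via the lexicographic-order combinatorics, and then push forward along $R_i\twoheadrightarrow A_i(e)$. Your $\Lambda_i'\cup(W\setminus\{0\})$ is exactly the paper's $(\Lambda_i\cup(\mathbb{N}^n\cap V))\setminus l_{i+1}$; you have simply made explicit the lex-order facts (strict inequality is preserved under positive convex combination with a weak one, and upgraded when adding a nonzero nonnegative vector) that the paper asserts in a single sentence.
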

\begin{proof}
    In the notation of Lemma \ref{subring lemma}, if $$(\lambda_1,\dots,\lambda_n),(\lambda_1',\dots,\lambda_n')\in \Lambda_i \cup (\mathbb{N}^n\cap V)$$ and $(\lambda_1,\dots,\lambda_n)\not\in l_{i+1}$ , then $$(\lambda_1'',\dots,\lambda_n''):=(\lambda_1,\dots,\lambda_n)+(\lambda_1',\dots,\lambda_n')\not\in l_{i+1}.$$
    Thus, the sub-$k$-space $\widetilde{B}_i$ of $R_i$ spanned by $\{X^\lambda|\lambda\in (\Lambda_i \cup (\mathbb{N}^n\cap V) )\setminus l_{i+1}\}$ is an ideal of $R_i$. The image $B_i$ of $\widetilde{B}_i$ under $R_i\twoheadrightarrow A_i(e)$ is also an ideal.
\end{proof}

\begin{lemma}\label{line lemma}In the situation of Definition \ref{cube def}, let us fix $0\le i< N$. Let $\lambda(i)$ denote the element closest to the origin in $ (C_i\cap l_{i+1})\setminus \{0\}$. Let $c(i)=r(A_i(e),X^{\lambda(i)})$ denote the largest integer such that $c(i)\cdot\lambda(i)\in C$. Then we have an isomorphism
$$A_i(e)/B_i\simeq k[\varepsilon]/\varepsilon^{c(i)+1},\ X^{\lambda(i)}\mapsto \varepsilon.$$ 
\end{lemma}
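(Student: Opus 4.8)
The statement to prove is Lemma~\ref{line lemma}: with $\lambda(i)$ the element of $(C_i\cap l_{i+1})\setminus\{0\}$ closest to the origin and $c(i)$ the largest integer with $c(i)\cdot\lambda(i)\in C$, we have $A_i(e)/B_i\simeq k[\varepsilon]/\varepsilon^{c(i)+1}$ via $X^{\lambda(i)}\mapsto\varepsilon$. The key structural fact I would use is that, by construction in Definition~\ref{cube def}, the basis of $A_i(e)$ is indexed by $C_i = (C_i\setminus l_{i+1})\cup(C_i\cap l_{i+1})$, and $B_i$ is the sub-$k$-space spanned by the monomials indexed by $C_i\setminus l_{i+1}$ (Lemma~\ref{ideal lem}), which is an ideal. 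So as a $k$-vector space the quotient $A_i(e)/B_i$ has basis the images of $\{X^\lambda : \lambda\in C_i\cap l_{i+1}\}$, i.e.\ the images of $X^0=1, X^{\lambda(i)}, X^{2\lambda(i)},\dots$.

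The plan is as follows. First I would observe that every nonzero point of $C\cap l_{i+1}$ is a positive integer multiple of $\lambda(i)$: this holds because $\lambda(i)$ is the point of $(C\cap l_{i+1})\setminus\{0\}$ nearest the origin and $C$ is the set of lattice points in a box, so $C\cap l_{i+1} = \{0,\lambda(i),2\lambda(i),\dots,c(i)\lambda(i)\}$ with $c(i)$ as defined. (Here I should note $C_i\cap l_{i+1} = C\cap l_{i+1}$, since passing from $C_0$ to $C_i$ only removes the lines $l_1,\dots,l_i$, each distinct from $l_{i+1}$ by condition (2), and the origin is retained.) Consequently the images of $1, X^{\lambda(i)},\dots, X^{c(i)\lambda(i)}$ form a $k$-basis of $A_i(e)/B_i$, so it has dimension $c(i)+1$. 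Second, I would identify the ring structure: in $A_i(e)$ (a quotient of the monomial algebra $A(e)=k[X_1,\dots,X_n]/(X_1^{e_1},\dots,X_n^{e_n})$), the product $X^{\lambda(i)}\cdot X^{j\lambda(i)}$ equals $X^{(j+1)\lambda(i)}$ if $(j+1)\lambda(i)\in C$ and $0$ otherwise (since in $A(e)$ a monomial vanishes exactly when some exponent reaches $e_i$, and $(j+1)\lambda(i)\notin C$ means precisely that). Thus $X^{\lambda(i)}$ generates $A_i(e)/B_i$ as a $k$-algebra, with $(X^{\lambda(i)})^{c(i)}\ne 0$ but $(X^{\lambda(i)})^{c(i)+1}=0$ (the latter monomial lying in $B_i$, being indexed by a point of $C_i\setminus l_{i+1}$ or else outside $C$ entirely and hence zero already in $A(e)$).

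Putting these together: the $k$-algebra map $k[\varepsilon]\to A_i(e)/B_i$ sending $\varepsilon\mapsto \overline{X^{\lambda(i)}}$ is surjective (by the second point) with kernel containing $\varepsilon^{c(i)+1}$, hence factors through a surjection $k[\varepsilon]/\varepsilon^{c(i)+1}\twoheadrightarrow A_i(e)/B_i$; comparing $k$-dimensions ($c(i)+1$ on both sides, by the first point) forces it to be an isomorphism. I do not expect a serious obstacle here — the main point requiring care is the combinatorial bookkeeping that $C_i\cap l_{i+1}$ is exactly the arithmetic progression $\{j\lambda(i): 0\le j\le c(i)\}$ and that no monomial indexed outside this progression survives in the quotient, which is really just unwinding Definitions~\ref{cube def} and the descriptions of $A_i(e)$ and $B_i$.
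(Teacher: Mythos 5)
Your proof is correct and follows essentially the same route as the paper: both rest on the observation that $C_i\cap l_{i+1}=C\cap l_{i+1}=\{0,\lambda(i),\dots,c(i)\lambda(i)\}$ (the paper gets this via the disjoint union decomposition $C_i=\{0\}\cup((C\cap V)\setminus\{0\})\cup\bigcup_{j>i}(C\cap l_j)$, you get it by noting that removing $l_1,\dots,l_i$ does not touch nonzero points of $l_{i+1}$), and then read off the monomial basis of $A_i(e)/B_i$. You spell out the ring-structure verification and the dimension count in more detail than the paper's terse one-line conclusion, but there is no substantive difference in method.
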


\begin{proof} First, note that we have a disjoint union decomposition $$C_i=\{0\}\cup ((C\cap V) \setminus \{0\})\cup \bigcup_{j=i+1}^N(C\cap l_j)$$
by induction on $i$. In particular, $ (C_i\cap l_{i+1})\setminus \{0\}= (C\cap l_{i+1})\setminus \{0\}$ is nonempty and $\lambda(i)$ is well-defined. As $C_i\cap l_{i+1}=\{0, \lambda(i),\dots,c(i)\cdot \lambda(i)\},$
    we have an isomorphism $A_i(e)/B_i\simeq k[\varepsilon]/\varepsilon^{c(i)+1}$ given by $X^{\lambda(i)}\simeq \varepsilon.$ 
\end{proof}

Let $\sigma$ be a permutation of $\{1,\dots,n\}$ and let $e_\sigma=(e_{\sigma(1)},\dots,e_{\sigma(n)}),\ a_\sigma=(a_{\sigma(1)},\dots,a_{\sigma(n)}).$ Then, we have a natural isomorphism $A(e,a)\simeq A(e_\sigma,a_\sigma)$ given by $X_i\mapsto X_{\sigma^{-1}(i)}.$

Recall the definitions of $r(A, y)$ and $D \langle y \rangle$ from Definition \ref{r def}, \ref{bracket def}.
\begin{lemma}\label{iota lemma}
In the situation of Definition \ref{cube def}, after permuting $e$ and $a$ simultaneously if necessary, the following statement holds: 

Let $e'=(e_1,\dots e_{n-1}),a'=(a_1,\dots,a_{n-1})$. Consider the following diagram
\[\begin{tikzcd}
	0 & {k\cdot T} & {A(e',a')} & {A(e')} & 0 \\
	0 & {k\cdot T} & {A(e,a)} & {A(e)} & 0
	\arrow[from=1-1, to=1-2]
	\arrow[from=1-2, to=1-3]
	\arrow[equals, from=1-2, to=2-2]
	\arrow["{\pi'}", from=1-3, to=1-4]
	\arrow["\iota", hook, from=1-3, to=2-3]
	\arrow[from=1-4, to=1-5]
	\arrow[hook, from=1-4, to=2-4]
	\arrow[from=2-1, to=2-2]
	\arrow[from=2-2, to=2-3]
	\arrow["\pi", from=2-3, to=2-4]
	\arrow[from=2-4, to=2-5]
\end{tikzcd}\]
where $\iota(X_i)=X_i,\ \iota(T)=T$. Let $R$ be a $k$-algebra essentially of finite type and $D\in \operatorname{HS}_k^{A(e,a)}(R)$. Then, there exist $y_1,\dots , y_{m}\in \mathfrak{m}_{A(e,a)}$ and $D^1\in \operatorname{HS}_k^{r(A(e,a),y_1)}(R) ,$ $\dots ,$  $D^m \in \operatorname{HS}_k^{r(A(e,a),y_m)}(R)$ such that for
$$E:=D^{m}\langle y_m \rangle\circ \dots \circ D^1\langle y_1 \rangle\circ D,$$
the image of $E|_R$ is contained $A(e',a')\otimes_k R$ (which is equivalent to say that there exists $F\in \operatorname{HS}_k^{A(e',a')}(R)$ such that $\iota_*(F)=E$, by Remark \ref{subring induce}).
\end{lemma}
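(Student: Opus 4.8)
The plan is to reduce the problem of pushing $D|_R$ into $A(e',a') \otimes_k R$ to controlling the finitely many coefficient maps of $D$ indexed by monomials $X^\lambda$ with $\lambda_n \ge 1$, by peeling off the lines $l_1, \dots, l_N$ of Definition \ref{cube def} one at a time. Recall from Lemma \ref{line lemma} that, after a simultaneous permutation of $e$ and $a$ so that the lexicographic ordering of the $P_i$ is compatible with the chosen flag, the quotients $A_i(e)/B_i$ are truncated polynomial rings $k[\varepsilon]/\varepsilon^{c(i)+1}$ with $\varepsilon$ the image of $X^{\lambda(i)}$, where $\lambda(i)$ is the primitive lattice point on $l_{i+1}$. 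The key point is that $A(e',a')\otimes_k R$ is exactly the sub-$R$-algebra of $A(e,a)\otimes_k R$ spanned by $\{T\}\cup\{X^\lambda : \lambda\in C\cap V\}$; so I must kill the components of $D|_R(1\otimes r)$ along the monomials $X^\lambda$ with $\lambda\notin V$, i.e.\ those lying on some $l_j$.

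First I would set up an induction on $N$ (equivalently, on $\dim_k A(e)$), peeling the lines from $l_1$ (lexicographically smallest slope) upward. Fix the first line $l_1$ and the primitive point $\lambda(0)$ on it with $c:=c(0)=r(A(e,a),X^{\lambda(0)})$. Consider the components of $D|_R$ along $X^{\lambda(0)}, X^{2\lambda(0)},\dots,X^{c\lambda(0)}$; using the cocycle/chain-rule identities defining $\operatorname{HS}^{A(e,a)}(R)$ together with the fact that $A_0(e)/B_0 \simeq k[\varepsilon]/\varepsilon^{c+1}$, these components assemble (after composing with the projection $A(e,a)\to A_0(e)/B_0$, and reading off via Remark \ref{basis identification}) into an ordinary Hasse--Schmidt derivation $D^1 \in \operatorname{HS}^{c}(R) = \operatorname{HS}^{r(A(e,a),y_1)}(R)$ with $y_1 := X^{\lambda(0)}$. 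Then $D^1\langle y_1\rangle = f_*(D^1)$ for $f:k[\varepsilon]/\varepsilon^{c+1}\to A(e,a)$, $\varepsilon\mapsto y_1$, and composing $D^1\langle y_1\rangle^{-1}\circ D$ cancels precisely the components along $l_1$ while — crucially — not disturbing the components already arranged to vanish, nor those along lines $l_j$ with $j>1$ beyond what the flag structure $C_0\supset C_1\supset\cdots$ already controls (here is where the lexicographic ordering of the $P_i$ in Definition \ref{cube def}(5) and Lemma \ref{ideal lem} are used: $B_0$ is an ideal, so the multiplicative interaction of the $l_1$-monomials with the rest stays inside $B_0$). One then iterates over $l_2,\dots,l_N$, at step $j$ using that $B_{j-1}$ is an ideal of $A_{j-1}(e)$ and $A_{j-1}(e)/B_{j-1}\simeq k[\varepsilon]/\varepsilon^{c(j-1)+1}$; this produces $D^2,\dots,D^m$ (with $m=N$) and elements $y_2,\dots,y_m \in \mathfrak m_{A(e,a)}$, and after composing all the $D^j\langle y_j\rangle$ the image of $E|_R$ lands in the $R$-span of $\{T\}\cup\{X^\lambda : \lambda \in C\cap V\}$, which is $A(e',a')\otimes_k R$. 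Finally I would invoke Remark \ref{subring induce} to get the desired $F \in \operatorname{HS}^{A(e',a')}(R)$ with $\iota_*(F)=E$.

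The main obstacle — and the step demanding the most care — is verifying that peeling line $l_j$ does not reintroduce nonzero components along lines already removed (or along $V$). This is exactly why the ordering in Definition \ref{cube def}(5) is imposed and why one needs $B_{j-1}$ to be an ideal (Lemma \ref{ideal lem}) rather than merely a subspace: when we correct by $D^j\langle y_j\rangle$, the products $y_j^a \cdot (\text{previously-arranged coefficients})$ must be expressible purely in terms of monomials in $C_{j-1}$ that are \emph{not} removed when we later pass to $C_j, C_{j+1},\dots$, and the semigroup computation in the proof of Lemma \ref{subring lemma}/\ref{ideal lem} (convexity of the slope along $l_j$ versus the lexicographic position of the remaining lines) is precisely what guarantees this. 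A secondary subtlety is bookkeeping the indices $r(A(e,a),y_j)$ versus $c(j-1)$ and checking $y_j^{c(j-1)}\ne 0$ but $y_j^{c(j-1)+1}=0$ so that $\langle y_j\rangle$ is defined with the correct source $\operatorname{HS}^{c(j-1)}(R)$; this follows from Lemma \ref{line lemma} and Remark \ref{basis rem}, but must be tracked explicitly.
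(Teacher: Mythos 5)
There is a genuine gap in your proposal, concentrated in the very first step $i=1$, and it is precisely the step the paper devotes the bulk of the proof to.

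Your plan sets $y_1 := X^{\lambda(0)} = X_n$ and $D^1 := (\operatorname{id},D_w,\dots,D_{c(0)w}) \in \operatorname{HS}^{c(0)}(R)$ with $c(0)=e_n-1$, and then composes with $D^1\langle y_1\rangle^{-1}$. But Definition \ref{bracket def} requires the argument of $\langle y_1\rangle$ to lie in $\operatorname{HS}^n(R)$ for some $n \ge r(A(e,a),y_1)$, and the lemma's conclusion requires $D^1 \in \operatorname{HS}^{r(A(e,a),y_1)}(R)$. The issue: if $a_n \neq 0$, then $X_n^{e_n}=a_nT\neq 0$ in $A(e,a)$, so $r(A(e,a),X_n)=e_n=c(0)+1$, not $c(0)$. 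Your $D^1$ is only a length-$(e_n-1)$ Hasse--Schmidt derivation; extending it to length $e_n$ is in general obstructed (that is exactly what $\operatorname{ob}_{e_n}$ measures), so $D^1\langle X_n\rangle$ is simply not defined with the source required by the lemma. Your closing remark that the bookkeeping ``follows from Lemma \ref{line lemma} and Remark \ref{basis rem}'' is incorrect at this point: those lemmas give $c(0)=e_n-1$, while $r(A(e,a),X_n)$ jumps to $e_n$ whenever $a_n\neq 0$. Note also that the role of the permutation is not to make the lexicographic ordering of the $P_i$ compatible with a flag (that ordering is fixed by Definition \ref{cube def}); it is to arrange either that some $a_i=0$ can be brought to position $n$, or that $p^j\mid e_1$ where $p^j\parallel e_n$ --- which is what the actual proof needs.

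The paper's proof of the $i=1$ step when all $a_i\neq 0$ is the technical heart of the lemma and has no analogue in your sketch. After permuting so that $p^j\mid e_1$ (with $p^j\parallel e_n$, $e_n=p^jg$, $e_1=p^jh$), one uses the element $z:=X_n^g-\sqrt[p^j]{a_n/a_1}\,X_1^h$, which satisfies $z^{p^j}=X_n^{e_n}-\frac{a_n}{a_1}X_1^{e_1}=0$ in $A(e,a)$, hence $r(A(e,a),z)<p^j$. Picking $D^2\in\operatorname{HS}^{p^j-1}(R)$ whose $[g]$-fold dilation has the same obstruction as $(D^1)^{-1}$, and pre-composing with $D^2\langle z\rangle$, makes the relevant truncation $\widetilde{E}^1 = D^2[g]\circ D^1$ have $\operatorname{ob}_{e_n}(\widetilde{E}^1)=0$, so it lifts to $D^3\in\operatorname{HS}^{e_n}(R)=\operatorname{HS}^{r(A(e,a),X_n)}(R)$, and only then can one cancel with $(D^3)^{-1}\langle X_n\rangle$. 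This obstruction-transfer trick (which uses Proposition \ref{np=p} via ``we have seen in Section 1'') is exactly what your proposal is missing. For $i\ge 2$ your picture is essentially right, and the reason things are easier there is spelled out in the paper: $\lambda(i)$ then has at least two nonzero entries, so $X^{j\lambda(i)}$ vanishing in $A(e)$ already forces $X^{j\lambda(i)}\in I_1I_2$, hence $r(A(e),X^{\lambda(i)})=r(A(e,a),X^{\lambda(i)})$ and no obstruction issue arises.
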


\begin{proof}
First, for every $1\le i\le N$, let $A_i(e,a):=\pi^{-1}(A_i(e))$. We have the following commutative diagram
\[\begin{tikzcd}
	0 & {k\cdot T} & {A_i(e,a)} & {A_i(e)} & 0 \\
	0 & {k\cdot T} & {A(e,a)} & {A(e)} & 0
	\arrow[from=1-1, to=1-2]
	\arrow[from=1-2, to=1-3]
	\arrow[equals, from=1-2, to=2-2]
	\arrow["{\pi_i}", from=1-3, to=1-4]
	\arrow["{\iota_i}", hook, from=1-3, to=2-3]
	\arrow[from=1-4, to=1-5]
	\arrow[hook, from=1-4, to=2-4]
	\arrow[from=2-1, to=2-2]
	\arrow[from=2-2, to=2-3]
	\arrow["\pi", from=2-3, to=2-4]
	\arrow[from=2-4, to=2-5].
\end{tikzcd}\]

Note that the set $\{T\}\cup \{X^\lambda|\lambda\in C_i\}$ is a $k$-basis for $A_i(e,a)$ and for $i=N$, $\pi_N$ is equal to $\pi'$. For $D\in \operatorname{HS}_k^{A(e,a)}(R)$, let us write $D_\lambda:=D_{X^\lambda}$ where $\lambda \in C$ (see Definition \ref{basis def} and Remark \ref{basis rem}.) We show that, by induction on $i=1,2,\dots$, there exists $y_1,\dots , y_{m}\in \mathfrak{m}_{A(e,a)}$ and $D^1\in \operatorname{HS}_k^{r(A(e,a),y_1)}(R),\dots, D^m\in \operatorname{HS}_k^{r(A(e,a),y_m)}(R)$ such that for
$$E^i=D^{m}\langle y_m \rangle\circ \dots \circ D^1\langle y_1 \rangle\circ D,$$  we have $\{\lambda\in C | E^i_\lambda\neq 0\}\subset C_i$ (which is equivalent to say that there exists $F^i\in \operatorname{HS}_k^{A_i(e,a)}(R)$ such that $(\iota_i)_*(F^i)=E^i$). 

We treat the cases $i=1$ and $i=2,\dots,N$ separately.

For the case $i=1$, let $w=(0,0,\dots,0,1)\in C$. We would like $$E^1=D^{m}\langle y_m \rangle\circ \dots \circ D^1\langle y_1 \rangle\circ D$$
to satisfy $E^1_w=E^1_{2w}=\dots E^1_{(e_n-1)w}=0$. 

First, suppose that one of the $a_i$ is zero. By rearranging, we can suppose that $a_n=0$. Consider the restriction of $D$ under $A(e,a)\to A_0(e)\to A_0(e)/B_0$. By Lemma \ref{line lemma}, we have
$$D^1:=(\operatorname{id}, D_w,\dots, D_{(e_n-1)w})\in \operatorname{HS}_k^{e_n-1}(R). $$
By the assumption, we have $r(A(e,a),X_n)=e_n-1$. We define $E^1$ to be
$$(D^1)^{-1}\langle X_n \rangle\circ D\in \operatorname{HS}_k^{A(e,a)}(R),$$
which satisfies $E_w=E_{2w}=\dots=E_{(e_n-1)w}=0.$
Next, suppose that none of the $a_i$ is zero. By rearranging again we can suppose that, if $j$ is the largest integer such that $p^j$ divides $e_n$, then $p^j$ also divides $e_{1}$. Let us write $e_n=p^jg, e_1=p^jh.$ We have that, in $A(e,a)$, $$(X_n^g-\frac{\sqrt[p^j]{a_n}}{\sqrt[p^j]{a_1}}X_1^h)^{p^j}=X_n^{e_n}-\frac{a_n}{a_1}X_1^{e_1}=a_nT-a_nT=0$$
so that $r(A(e,a),X_n^g-\frac{\sqrt[p^j]{a_n}}{\sqrt[p^j]{a_1}}X_1^h)< p^j.$ As in the first case, we have that
$$D^1:=(\operatorname{id}, D_w,\dots, D_{(e_n-1)w})\in \operatorname{HS}_k^{e_n-1}(R). $$
We have seen in Section 1 that there exists
$D^2\in \operatorname{HS}_k^{p^j-1}(R)$ such that $(D^1)^{-1} , D^2[g]\in \operatorname{HS}_k^{e_n-1}(R)$ have the same obstruction. Let
$$\widetilde{E}=D^2\langle X_n^g-\frac{\sqrt[p^j]{a_n}}{\sqrt[p^j]{a_1}}X_1^h\rangle\circ D\in \operatorname{HS}_k^{A(e,a)}(R).$$
We can compute
$$\widetilde{E}^1:=(\operatorname{id}, \widetilde{E}_w,\dots, \widetilde{E}_{(e_n-1)w})\in \operatorname{HS}_k^{e_n-1}(R)$$
by passing to $A(e,a)\to A_0(e)\to A_0(e)/B_0\simeq k[\varepsilon]/\varepsilon^{e_n}.$ Over $k[\varepsilon]/\varepsilon^{e_n}$, we have
$$\widetilde{E}^1=D^2\langle \varepsilon^g-\frac{\sqrt[p^j]{a_n}}{\sqrt[p^j]{a_1}}0^h\rangle\circ D^1=D^2\langle \varepsilon^g\rangle \circ D^1=D^2[ g ]\circ D^1.$$

Hence, we have $\operatorname{ob}_{e_n}(\widetilde{E}^1)=0$, and this lifts to $D^3\in \operatorname{HS}_k^{e_n}(R)$. Now we set $E^1$ by
$$E^1=(D^3)^{-1}\langle X_n \rangle\circ \widetilde{E}\in \operatorname{HS}_k^{A(e,a)}(R),$$
which satisfies $E^1_w=E^1_{2w}=\dots=E^1_{(e_n-1)w}=0.$ This completes the induction step for $i=1$.

Now suppose that for $1\le i\le N-1$, we have reduced to the case that
$$E^i=D^{m}\langle y_m \rangle\circ \dots \circ D^1\langle y_1 \rangle\circ D$$ satisfies $\{\lambda\in C | E^i_\lambda\neq 0\}\subset C_i$, which is equivalent to saying that there exists $F^i\in \operatorname{HS}_k^{A_i(e,a)}(R)$ such that $(\iota_i)_*(F^i)=E^i$.

We use the notation introduced in Lemma \ref{line lemma}. By the lemma, we have
$$G:=(\operatorname{id}, F^i_{1\cdot \lambda(i)},\dots, F^i_{c(i)\cdot \lambda(i)})\in \operatorname{HS}_k^{c(i)}(R). $$

We note that $$c(i)=r(A(e),X^{\lambda(i)})=r(A(e,a),X^{\lambda(i)})=r(A_i(e,a),X^{\lambda(i)}).$$ The first equality follows from the definition, and the third holds because $A_i(e,a)$ is a subring of $A(e,a)$. For the second equality, suppose $X^{j\cdot \lambda(i)}=0$ in $A(e)$ for some $j\ge 1$. This implies that, for some $1\le n_0 \le n$, we have $j\cdot \lambda_{n_0}(i)\ge e_{n_0}$ where $\lambda(i)=(\lambda_1(i),\dots,\lambda_{n}(i))$. Because $i\ge 1$, at least two entries of $\lambda(i)$ are nonzero. This implies that $X^{j\cdot \lambda(i)}\in I_1I_2$ in the notation of Definition \ref{Ae lemma}. Thus we also have $X^{j\cdot \lambda(i)}=0$ in $A(e,a)$. By the definition of $r$, we have $r(A(e),X^{\lambda(i)})\ge r(A(e,a),X^{\lambda(i)})$. The converse inequality is clear and we have $r(A(e),X^{\lambda(i)})=r(A(e,a),X^{\lambda(i)})$.

As $c(i)=r(A_i(e,a),X^{\lambda(i)})$, we can define
$$\widetilde{F}:=G^{-1}\langle X^{\lambda (i)}\rangle \circ F^i\in \operatorname{HS}_k^{A_i(e,a)}(R).$$
By passing to $A_i(e,a)\to A_i(e)\to A_i(e)/B_i$ as in the proof of Lemma \ref{line lemma}, we see that $\widetilde{F}_\lambda=0$ for $\lambda\in (C\cap l_{i+1})\setminus \{0\}$. Hence, if we set $E^{i+1}$ as
$$E^{i+1}:=(\iota_i)_*(\widetilde{F})=G^{-1}\langle X^{\lambda (i)}\rangle \circ E^i\in \operatorname{HS}_k^{A(e,a)}(R),$$
we have $\{\lambda\in C | E^{i+1}_\lambda \neq 0\} \subset C_{i+1}$. This step completes the proof.\end{proof}

\begin{proposition}\label{decomposition prop}
    Let $R$ be a $k$-algebra essentially of finite type. Let $n\ge 0$, $e=(e_1,\dots, e_n)$ be a sequence of positive integers and let $a=(a_1,\dots,a_n)$ be a sequence in $k$. For any $D\in \operatorname{HS}_k^{A(e,a)}(R)$, there exists a sequence $y_1,\dots,y_m\in \mathfrak{m}_{A(a,e)}$ and $D^i\in \operatorname{HS}_k^{r(A(e,a),y_i)}(R) $ for every $1\le i \le m$ such that 
    $$D=D^1\langle y_1 \rangle\circ \dots \circ D^m\langle y_m \rangle.$$
\end{proposition}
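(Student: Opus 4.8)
The plan is to prove the proposition by induction on $n$, with the inductive step supplied by Lemma~\ref{iota lemma}. For the base case $n=0$ the underlying polynomial ring is $k[T]$, the ideals $I_1I_2$ and $TI_1$ and all the $f_i$ vanish, so $A(e,a)=k[T]/(T^2)$; here $T\neq 0$ and $\mathfrak{m}_{A(e,a)}\cdot T=0$, so $0\to k\cdot T\to A(e,a)\to k\to 0$ is a small extension. Any $D\in\operatorname{HS}^{A(e,a)}(R)$ maps to the identity under $A(e,a)\to k$ (the group $\operatorname{HS}^k(R)$ being trivial), so Lemma~\ref{bracket lem} gives $D=d\langle T\rangle$ for a unique $d\in\operatorname{Der}_k(R,R)$; since $r(A(e,a),T)=1$ and $(\operatorname{id},d)\in\operatorname{HS}^1(R)$, this is already a decomposition of the required shape with $m=1$.

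For the inductive step I would fix $n\ge 1$ and assume the proposition for all length-$(n-1)$ data. Because $A(e,a)\simeq A(e_\sigma,a_\sigma)$ canonically for every permutation $\sigma$, and such an isomorphism carries a bracket $G\langle y\rangle$ to a bracket $G\langle y'\rangle$ while preserving the integers $r(\cdot,\cdot)$, it suffices to treat one representative of each permutation class, so I may assume $e$ and $a$ are ordered so that Lemma~\ref{iota lemma} applies directly. Writing $e'=(e_1,\dots,e_{n-1})$, $a'=(a_1,\dots,a_{n-1})$ and $\iota:A(e',a')\hookrightarrow A(e,a)$, Lemma~\ref{iota lemma} then produces $y_1,\dots,y_m\in\mathfrak{m}_{A(e,a)}$ and $D^i\in\operatorname{HS}^{r(A(e,a),y_i)}(R)$ such that
$$E:=D^{m}\langle y_m\rangle\circ\cdots\circ D^1\langle y_1\rangle\circ D=\iota_*(F)\quad\text{for some }F\in\operatorname{HS}^{A(e',a')}(R),$$
and the induction hypothesis applied to $F$ yields $z_1,\dots,z_l\in\mathfrak{m}_{A(e',a')}$ and $F^j\in\operatorname{HS}^{r(A(e',a'),z_j)}(R)$ with $F=F^1\langle z_1\rangle\circ\cdots\circ F^l\langle z_l\rangle$.

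To conclude I would transport the factorization of $F$ through $\iota_*$: by functoriality of $(-)_*$ one has $\iota_*(F^j\langle z_j\rangle)=F^j\langle\iota(z_j)\rangle$, and since $\iota$ is injective $r(A(e,a),\iota(z_j))=r(A(e',a'),z_j)$, so each $F^j$ lies in the correct $\operatorname{HS}$-group; as $\iota_*$ is a group homomorphism, $E=F^1\langle\iota(z_1)\rangle\circ\cdots\circ F^l\langle\iota(z_l)\rangle$. Each $\langle y_i\rangle$ is likewise a homomorphism, so $(D^i\langle y_i\rangle)^{-1}=(D^i)^{-1}\langle y_i\rangle$ with $(D^i)^{-1}\in\operatorname{HS}^{r(A(e,a),y_i)}(R)$; solving the displayed equation for $D$ then gives
$$D=(D^1)^{-1}\langle y_1\rangle\circ\cdots\circ(D^m)^{-1}\langle y_m\rangle\circ F^1\langle\iota(z_1)\rangle\circ\cdots\circ F^l\langle\iota(z_l)\rangle,$$
a product of the required form, which closes the induction. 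All of the genuinely delicate combinatorial work already resides in Lemma~\ref{iota lemma}; what remains here is bookkeeping, and the only points I expect to need real care are the identities $\iota_*(G\langle z\rangle)=G\langle\iota(z)\rangle$ and $(G\langle y\rangle)^{-1}=G^{-1}\langle y\rangle$, together with the invariance of $r(\cdot,\cdot)$ under the injection $\iota$ and under coordinate permutations.
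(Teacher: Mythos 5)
Your argument is correct and follows the paper's proof essentially verbatim: induction on $n$, base case $A(e,a)\simeq k[\varepsilon]/\varepsilon^2$, inductive step via Lemma~\ref{iota lemma} and Remark~\ref{subring induce}, then transport of the factorization of $F$ through $\iota_*$. The only difference is that you spell out more of the bookkeeping (the identities $\iota_*(G\langle z\rangle)=G\langle\iota(z)\rangle$ and $(G\langle y\rangle)^{-1}=G^{-1}\langle y\rangle$, and invariance of $r(\cdot,\cdot)$ under $\iota$ and under permutations), all of which the paper leaves implicit; these checks are correct.
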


\begin{proof}
    We proceed by induction on $n$. The case $n=0$ is trivial, since $A(e,a)\simeq k[\varepsilon]/\varepsilon^2$. If $n\ge 1$, wa may use the previous lemma. Rearrange $(e_1,\dots, e_n)$ and $(a_1,\dots,a_n)$ if necessary and let $e'=(e_1,\dots e_{n-1}),a'=(a_1,\dots,a_{n-1})$. We have the following diagram
\[\begin{tikzcd}
	0 & {k\cdot T} & {A(e',a')} & {A(e')} & 0 \\
	0 & {k\cdot T} & {A(e,a)} & {A(e)} & 0
	\arrow[from=1-1, to=1-2]
	\arrow[from=1-2, to=1-3]
	\arrow[equals, from=1-2, to=2-2]
	\arrow[from=1-3, to=1-4]
	\arrow["\iota", hook, from=1-3, to=2-3]
	\arrow[from=1-4, to=1-5]
	\arrow[hook, from=1-4, to=2-4]
	\arrow[from=2-1, to=2-2]
	\arrow[from=2-2, to=2-3]
	\arrow[from=2-3, to=2-4]
	\arrow[from=2-4, to=2-5].
\end{tikzcd}\]
We can find $y_1,\dots , y_{m'}\in \mathfrak{m}_{A(e,a)}$ and $D^i\in \operatorname{HS}_k^{r(A(e,a),y_i)}(R)$ such that for
$$E:=D^{m'}\langle y_{m'}\rangle \circ \dots \circ D^1\langle y_1 \rangle\circ D,$$
the image of $E|_R$ is contained $A(e',a')\otimes_k R$. By Remark \ref{subring induce}, there exists $F\in \operatorname{HS}_k^{A(e',a')}(R)$ such that $\iota_*(F)=E$. By the induction hypothesis, there exist $z_1,\dots,z_{m''}\in \mathfrak{m}_{A(e',a')}$ and $F^i\in \operatorname{HS}_k^{r(A(e',a'),z_i)}(R)$ such that
$$F=F^1\langle z_1 \rangle \circ \dots \circ F^{m''}\langle z_{m''}\rangle.$$
Then, we have that
$$D=(D^1)^{-1}\langle y_1 \rangle\circ \dots \circ (D^{m'})^{-1}\langle y_{m'}\rangle \circ F^1\langle z_1\rangle \circ \dots \circ F^{m''}\langle z_{m''}\rangle  $$
and this completes the induction step.
\end{proof}

\begin{theorem}\label{decomposition thm}
    Let $R$ be a $k$-algebra essentially of finite type. Let $A$ be an object of $\mathcal{A}$. For any $D\in \operatorname{HS}_k^{A}(R)$, there exists a sequence $y_1,\dots,y_m\in \mathfrak{m}_{A}$ and $D^i\in \operatorname{HS}_k^{r(A,y_i)}(R) $ for every $1\le i \le m$ such that 
    $$D=D^1\langle y_1 \rangle\circ \dots \circ D^m\langle y_m \rangle.$$
    (See Definitions \ref{r def} and \ref{bracket def} for the symbols.)
\end{theorem}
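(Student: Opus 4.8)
The plan is to bootstrap from the case $A=A(e,a)$ already settled in Proposition \ref{decomposition prop}: I would prove the theorem for general $A$ by induction on $\dim_k A$, peeling $A$ apart one socle element at a time. When $\dim_k A\le 2$ we have either $A=k$, where $\operatorname{HS}^A(R)$ is trivial, or $A\cong k[\varepsilon]/\varepsilon^2$, which is the $n=0$ instance of Proposition \ref{decomposition prop}. For $\dim_k A=N\ge 3$, fix a nonzero $t$ in the socle of $\mathfrak m_A$, giving a small extension $\Phi\colon 0\to k\cdot t\to A\xrightarrow{\pi}\bar A\to 0$ with $\dim_k\bar A=N-1$ and $t^2=0$, so $r(A,t)=1$. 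Given $D\in\operatorname{HS}^A(R)$, put $\bar D:=\pi_*(D)$ and, by the inductive hypothesis, fix $\bar D=\bar D^1\langle\bar y_1\rangle\circ\dots\circ\bar D^s\langle\bar y_s\rangle$ with $\bar y_i\in\mathfrak m_{\bar A}$ and $\bar D^i\in\operatorname{HS}^{r(\bar A,\bar y_i)}(R)$. Choose lifts $y_i\in\mathfrak m_A$ of the $\bar y_i$; since $y_i^{\,r(\bar A,\bar y_i)+1}$ dies in $\bar A$ it lies in $k\cdot t$, hence $r(A,y_i)\in\{r(\bar A,\bar y_i),\,r(\bar A,\bar y_i)+1\}$.

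\textbf{The unobstructed factors.} The key simplification is that it suffices to produce \emph{any} $E\in\operatorname{HS}^A(R)$ which is a composition of bracket factors over $\mathfrak m_A$ and satisfies $\pi_*(E)=\bar D$: then $D\circ E^{-1}\in\ker\pi_*$, so by Lemma \ref{bracket lem} it equals $d\langle t\rangle$ for some $d\in\operatorname{Der}_k(R,R)=\operatorname{HS}^{r(A,t)}(R)$ (which is moreover central), and $D=d\langle t\rangle\circ E$ is then the desired decomposition. Now if $r(A,y_i)=r(\bar A,\bar y_i)$ for every $i$, each $\bar D^i$ already lies in $\operatorname{HS}^{r(A,y_i)}(R)$ and, via the evident commuting squares of ring maps $k[\varepsilon]/\varepsilon^{r(A,y_i)+1}\to A$ and $\to\bar A$, the element $E:=\bar D^1\langle y_1\rangle\circ\dots\circ\bar D^s\langle y_s\rangle\in\operatorname{HS}^A(R)$ has $\pi_*(E)=\bar D$, and we are done.

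\textbf{The obstructed factors (the heart).} The real difficulty is when $r(A,y_i)=r(\bar A,\bar y_i)+1$, i.e. $y_i^{\,r(\bar A,\bar y_i)+1}=\lambda_i t$ with $\lambda_i\ne 0$: here $\bar D^i\langle y_i\rangle$ is undefined unless $\bar D^i$ extends by one further step, which is governed by $\operatorname{ob}_{r(\bar A,\bar y_i)+1}(\bar D^i)\in\operatorname{Ob}_R^{\,r(\bar A,\bar y_i)+1}$. Since $D$ exists we know $\operatorname{ob}_\Phi(\bar D)=0$, and the functoriality of obstructions (the Proposition preceding Definition \ref{i Phi}), applied to the maps of small extensions $k[\varepsilon]/\varepsilon^{r(\bar A,\bar y_i)+1}\to\bar A$ and $k[\varepsilon]/\varepsilon^{r(\bar A,\bar y_i)+2}\to A$, turns this into $\sum_i\lambda_i\operatorname{ob}_{r(\bar A,\bar y_i)+1}(\bar D^i)=0$ — but the individual terms need not vanish. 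To repair this I would exploit the perfectness of $k$: rescale each bad $y_i$ (absorbing the scalar onto $\bar D^i$ via $D\langle cy\rangle=(c\bullet D)\langle y\rangle$ and $\operatorname{ob}(c\bullet D)=c^{\,r+1}\operatorname{ob}(D)$) so that $y_i^{\,r(\bar A,\bar y_i)+1}=t$ exactly, then realize the obstructed part of the data as pulled back along a map from a suitable ring of the form $A(e,a)$, where the obstruction‑killing maneuver of Lemma \ref{iota lemma} — which cancels classes $\operatorname{ob}_{e_n}$ by passing to $p^j$-th roots in $k$ — produces genuine bracket factors over $\mathfrak m_A$ that realize $\bar D$ modulo $k\cdot t$; combined with the previous paragraph this finishes the induction.

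\textbf{Main obstacle.} The step I expect to be hardest is exactly this last one. Because the bracket operation $\langle\,\cdot\,\rangle$ is noncommutative, the obstructed factors $\bar D^i\langle\bar y_i\rangle$ cannot simply be collected and treated together; one must arrange the devissage so that the already‑established decomposition over the auxiliary rings $A(e,a)$ suffices to neutralize all obstructed factors simultaneously, up to the single final central correction $d\langle t\rangle$. Everything else — lifting the unobstructed factors, the centrality statement from Lemma \ref{bracket lem}, and keeping track of the exponents $r(A,y_i)$ — is routine bookkeeping.
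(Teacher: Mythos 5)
Your overall strategy matches the paper's: induction on the length of $A$, peeling off a socle element to get a small extension $\Phi\colon 0\to k\cdot t\to A\xrightarrow{\pi}\overline{A}\to 0$, decomposing $\overline{D}:=\pi_*(D)$ inductively, and using the centrality statement of Lemma \ref{bracket lem} to reduce the problem to producing \emph{any} bracket composition $E$ with $\pi_*(E)=\overline{D}$. Your diagnosis of the hard point is also correct: lifting the factors $\overline{D}^i\langle\overline{y}_i\rangle$ individually fails because the individual obstructions $\operatorname{ob}_{r(\overline{A},\overline{y}_i)+1}(\overline{D}^i)$ need not vanish, even though the fact that $D$ exists forces the suitably weighted combination to vanish.

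The problem is that your "obstructed factors" paragraph stops at a gesture precisely where the content lies. You never construct the map from $A(e,a)$, never identify the Hasse--Schmidt derivation you intend to decompose over $A(e,a)$, and never explain how its $A(e,a)$-decomposition pushes forward to brackets over $\mathfrak{m}_A$. The paper's resolution fills this in, and in fact is cleaner than what you propose because it treats \emph{all} factors uniformly rather than splitting into obstructed and unobstructed ones. With $y_i\in\mathfrak{m}_A$ lifting $\overline{y}_i$, set $e_i:=r(\overline{A},\overline{y}_i)+1$; then $y_i^{e_i}=a_i t$ for some $a_i\in k$ (possibly zero). Pull each $\overline{D}^i$ back along $k[\varepsilon]/\varepsilon^{e_i}\to A(e)$, $\varepsilon\mapsto X_i$, to get $E^i\in\operatorname{HS}^{A(e)}(R)$, and set $E:=E^1\circ\cdots\circ E^m$. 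Under $g\colon A(e)\to\overline{A}$, $X_i\mapsto\overline{y}_i$, this satisfies $g_*(E)=\overline{D}$. Now one single application of the functoriality of obstructions, along the map of small extensions $(0\to kT\to A(e,a)\to A(e)\to 0)\to\Phi$, gives $\operatorname{ob}(E)=1\cdot\operatorname{ob}(E)=\operatorname{ob}_\Phi(\overline{D})=0$ — no separation into cases, no sum, no rescaling. Hence $E$ lifts to $F\in\operatorname{HS}^{A(e,a)}(R)$; Proposition \ref{decomposition prop} decomposes $F$ into brackets over $\mathfrak{m}_{A(e,a)}$; and pushing forward along $f\colon A(e,a)\to A$, $X_i\mapsto y_i$, $T\mapsto t$, yields a bracket composition over $\mathfrak{m}_A$ agreeing with $D$ modulo $t$, which your central correction $d\langle t\rangle$ then fixes. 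This construction — pull everything back to $A(e)$, check one vanishing obstruction, lift once to $A(e,a)$, decompose there, push down — is the missing heart of your argument. Your rescaling each bad $y_i$ so that $y_i^{r+1}=t$ is both unnecessary (the scalars $a_i$ are simply built into $A(e,a)$) and slightly misleading, since after a rescaling you still have several independent $y_i$ hitting the same $t$ and must confront exactly the lifting problem that $A(e,a)$ was designed to handle.
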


\begin{proof}
By induction of the length of $A$. If the length is one, then $A=k$ and this case is trivial. To proceed with the induction step, consider a small extension
$$\Psi: 0\to k\cdot t\to A'\overset{\pi}{\to} A\to 0.$$
It suffices to show that, if the theorem is true for $A$, then so for $A'$. Let $D\in \operatorname{HS}_k^{A'}(R)$. By the induction hypothesis, there exists a decomposition
$$\pi_*(D)=D^1\langle y_1 \rangle\circ \dots \circ D^m\langle y_m \rangle$$
for $y_1,\dots,y_m\in \mathfrak{m}_{A}$ and $D^i\in \operatorname{HS}_k^{r(A,y_i)}(R) $. 
Let $z_1,\dots,z_m$ be lifts of $y_i$ to $\mathfrak{m}_{A'}$. Let $e_i=r(A,y_i)+1$ for every $i=1,\dots, m$. Then, we can write $z_i^{e_i}=a_it,\ a_i\in k$ for every $i=1,\dots,m$. Let $e=(e_1,\dots,e_m),a=(a_1,\dots,a_m)$. We have the following commutative diagram
\[\begin{tikzcd}
	{\Phi:} & 0 & {k\cdot T} & {A(e,a)} & {A(e)} & 0 \\
	{\Psi:} & 0 & {k\cdot t} & {A'} & A & 0
	\arrow[from=1-2, to=1-3]
	\arrow[from=1-3, to=1-4]
	\arrow[from=1-3, to=2-3]
	\arrow[from=1-4, to=1-5]
	\arrow["f", from=1-4, to=2-4]
	\arrow[from=1-5, to=1-6]
	\arrow["g", from=1-5, to=2-5]
	\arrow[from=2-2, to=2-3]
	\arrow[from=2-3, to=2-4]
	\arrow[from=2-4, to=2-5]
	\arrow[from=2-5, to=2-6]
\end{tikzcd}\]
where $f(X_i)=z_i,\ f(T)=t$ and $g(X_i)=y_i$. For each $1\le i \le m$, we also have the commutative diagram
\[\begin{tikzcd}
	0 & {k\cdot\varepsilon^{e_i}} & {k[\varepsilon]/\varepsilon^{e_i+1}} & {k[\varepsilon]/\varepsilon^{e_i}} & 0 \\
	0 & {k\cdot T} & {A(e,a)} & {A(e)} & 0
	\arrow[from=1-1, to=1-2]
	\arrow[from=1-2, to=1-3]
	\arrow["{a_i\cdot}"', from=1-2, to=2-2]
	\arrow[from=1-3, to=1-4]
	\arrow["{f_i}"', from=1-3, to=2-3]
	\arrow[from=1-4, to=1-5]
	\arrow["{g_i}", from=1-4, to=2-4]
	\arrow[from=2-1, to=2-2]
	\arrow[from=2-2, to=2-3]
	\arrow[from=2-3, to=2-4]
	\arrow[from=2-4, to=2-5]
\end{tikzcd}\]
where $f_i(\varepsilon)=X_i, \ g_i(\varepsilon)=X_i$. Let $E^i:=(g_i)_*(D^i)\in \operatorname{HS}_k^{A(e)}(R)$. We have $D^i\langle y_i\rangle =g_*(E^i)$ in $\operatorname{HS}_k^{A}(R)$ because of the functoriality of $\operatorname{HS}_k^-(R)$. Thus, $E:=E^1\circ\dots\circ E^m$ is a lift of $\pi_*(D)$ to $A(e)$. We have
$$\operatorname{ob}_\Phi (E) =1\cdot \operatorname{ob}_\Phi (E)=\operatorname{ob}_\Psi (g_*(E))=\operatorname{ob}_\Psi (\pi_*(D)),$$
which is zero because $\pi_*(D)$ lifts to $D$ along $\Psi$. Thus, also $E$ lifts to $F\in \operatorname{HS}_k^{A(e,a)}(R)$. By Proposition \ref{decomposition prop}, $F$ decomposes as
$$F=F^1\langle u_1\rangle \circ\dots\circ F^{m'}\langle u_{m'}\rangle .$$ By construction, $D$ and 
$$f_*(F)=F^1\langle f(u_1)\rangle \circ\dots\circ F^{m'}\langle f(u_{m'})\rangle $$
agree modulo $t$. Hence, there exists $d\in \operatorname{Der}_k(R,R)$ such that
$$D=d\langle t \rangle\circ F^1\langle f(u_1)\rangle \circ\dots\circ F^{m'}\langle f(u_{m'})\rangle $$
and this completes the induction.
\end{proof}

\begin{theorem}\label{artin obst theorem}
    Let $R$ be a $k$-algebra essentially of finite type and $$\Phi: \ 0\to k\cdot t\longrightarrow A' \overset{
    }{\longrightarrow}A \to 0$$ be a small extension. Then, we have
    $$\operatorname{Ob}_R^{q}= \operatorname{Ob}_R^{\Phi}$$
    where $q=p^{i(\Phi)}$ (see Definition \ref{i Phi} for $i(\Phi)$).
    In particular, $\operatorname{Ob}_R^{\Phi}$ is a finite $R^{q}$-module and is compatible with localization in the sense stated in Proposition \ref{Ob module}. 
\end{theorem}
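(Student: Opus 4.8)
The plan is to prove the two inclusions $\operatorname{Ob}_R^q \subset \operatorname{Ob}_R^\Phi$ and $\operatorname{Ob}_R^\Phi \subset \operatorname{Ob}_R^q$ separately, using the decomposition theorem (Theorem \ref{decomposition thm}) to reduce the second, harder inclusion to the case of ordinary Hasse--Schmidt derivations. The inclusion $\operatorname{Ob}_R^q \subset \operatorname{Ob}_R^\Phi$ is already essentially recorded in the remark following Definition \ref{i Phi}: if $x \in \mathfrak{m}_{A'}$ with $0 \neq x^n \in k\cdot t$ and $p^{i(\Phi)} \mid n$, then the $k$-algebra map $k[\varepsilon]/\varepsilon^{n+1} \to A'$, $\varepsilon \mapsto x$, together with the fact that $k = k^{p^j}$ for all $j$, gives $\operatorname{Ob}_R^n = a\cdot\operatorname{Ob}_R^n \subset \operatorname{Ob}_R^\Phi$; and by Lemma \ref{Ob module 2} and Proposition \ref{np=p} we have $\operatorname{Ob}_R^q = \operatorname{Ob}_R^{p^{i(\Phi)}} \subset \operatorname{Ob}_R^n$. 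I would simply invoke that remark.

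For the reverse inclusion, let $D \in \operatorname{HS}^A(R)$; I want to show $\operatorname{ob}_\Phi(D) \in \operatorname{Ob}_R^q$. First lift $D$ to $\widetilde D \in \operatorname{HS}^{A'}(R)$ — wait, this is not possible in general, so instead I work directly: choose $T$, a localization of a polynomial algebra, with $T/I = R$, lift $D$ to $E \in \operatorname{HS}^{A'}(T)$, and recall that $\operatorname{ob}_\Phi(D)$ is the class of $f_E \colon I/I^2 \to R$. By Theorem \ref{decomposition thm} applied to $A'$ (or rather, applied after pulling back to $T$ and then reducing), write the image of $E$ suitably; more cleanly, apply Theorem \ref{decomposition thm} to $D \in \operatorname{HS}^A(R)$ to get $D = D^1\langle y_1\rangle \circ \cdots \circ D^m\langle y_m\rangle$ with $y_i \in \mathfrak{m}_A$, $D^i \in \operatorname{HS}^{r(A,y_i)}(R)$. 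Since $\operatorname{ob}_\Phi$ is a group homomorphism, $\operatorname{ob}_\Phi(D) = \sum_i \operatorname{ob}_\Phi(D^i\langle y_i\rangle)$, so it suffices to handle each factor. For a fixed $i$, lift $y_i$ to $z \in \mathfrak{m}_{A'}$, put $e = r(A,y_i)+1$, so $z^e = a\cdot t$ for some $a \in k$, and form the commutative diagram
\[\begin{tikzcd}
	0 & {k\cdot \varepsilon^e} & {k[\varepsilon]/\varepsilon^{e+1}} & {k[\varepsilon]/\varepsilon^{e}} & 0 \\
	0 & {k\cdot t} & {A'} & A & 0
	\arrow[from=1-1, to=1-2]
	\arrow[from=1-2, to=1-3]
	\arrow["{a\cdot}"', from=1-2, to=2-2]
	\arrow[from=1-3, to=1-4]
	\arrow["{f}", from=1-3, to=2-3]
	\arrow[from=1-4, to=1-5]
	\arrow["{g}", from=1-4, to=2-4]
	\arrow[from=2-1, to=2-2]
	\arrow[from=2-2, to=2-3]
	\arrow[from=2-3, to=2-4]
	\arrow[from=2-4, to=2-5]
\end{tikzcd}\]
with $f(\varepsilon) = z$, $g(\varepsilon) = y_i$, so that $g_*(D^i) = D^i\langle y_i\rangle$. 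Then the functoriality proposition (the one computing $a\cdot\operatorname{ob}_\Phi$) gives $\operatorname{ob}_\Phi(D^i\langle y_i\rangle) = \operatorname{ob}_\Phi(g_*(D^i)) = a\cdot\operatorname{ob}_{m}(D^i)$ where $m$ denotes the small extension $k[\varepsilon]/\varepsilon^{e+1} \to k[\varepsilon]/\varepsilon^{e}$, i.e. $\operatorname{ob}_\Phi(D^i\langle y_i\rangle) = a\cdot \operatorname{ob}_e(D^i) \in a\cdot\operatorname{Ob}_R^e$. Now $\operatorname{Ob}_R^e = \operatorname{Ob}_R^{p^j}$ where $p^j$ is the largest power of $p$ dividing $e$ (Proposition \ref{np=p}), and since $z^e = a\cdot t$ is a nonzero element of $k\cdot t$ when $a \neq 0$, the integer $e$ lies in the set $S$ of Definition \ref{i Phi}, whence $j \le i(\Phi)$ and $\operatorname{Ob}_R^{p^j} \subset \operatorname{Ob}_R^{p^{i(\Phi)}} = \operatorname{Ob}_R^q$ by Lemma \ref{Ob module 2}; using $a\cdot\operatorname{Ob}_R^{p^j} = \operatorname{Ob}_R^{p^j}$ (as $k = k^{p^j}$) when $a\neq 0$, and noting the term vanishes when $a = 0$, we get $\operatorname{ob}_\Phi(D^i\langle y_i\rangle) \in \operatorname{Ob}_R^q$. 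Summing over $i$ yields $\operatorname{ob}_\Phi(D) \in \operatorname{Ob}_R^q$, completing the second inclusion.

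The final clause — that $\operatorname{Ob}_R^\Phi$ is a finite $R^q$-module compatible with localization — then follows immediately from the equality $\operatorname{Ob}_R^\Phi = \operatorname{Ob}_R^q$ together with Proposition \ref{Ob module}. The main obstacle I anticipate is bookkeeping around the case $a = 0$ (where $z^e = 0$ so $e \notin S$ and one must argue the obstruction term is zero rather than lands in $\operatorname{Ob}_R^q$ via the $S$-argument) and making sure the lift $z$ of $y_i$ can always be chosen so that $r(A,y_i)+1$ genuinely detects an element of $S$ in the generic case; a careful reading shows that either $a\neq 0$ and $e \in S$, or $a = 0$ and the contribution vanishes, so the argument closes either way. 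The only other subtlety is that Theorem \ref{decomposition thm} is stated with $D^i \in \operatorname{HS}^{r(A,y_i)}(R)$, so if $r(A,y_i) = 0$ the factor is trivial and contributes nothing, which is harmless.
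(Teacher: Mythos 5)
Your proposal is correct and follows essentially the same route as the paper's own proof: establish $\operatorname{Ob}_R^q \subset \operatorname{Ob}_R^\Phi$ from the remark after Definition \ref{i Phi}, then use Theorem \ref{decomposition thm} plus the group-homomorphism and pushforward-compatibility properties of $\operatorname{ob}_\Phi$ to reduce the reverse inclusion termwise to $a_i \cdot \operatorname{ob}_{r_i+1}(D^i) \in \operatorname{Ob}_R^q$, invoking Proposition \ref{np=p} and the filtration from Lemma \ref{Ob module 2}. The only difference is that you spell out the $a_i=0$ and $r(A,y_i)=0$ edge cases explicitly, which the paper leaves implicit.
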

\begin{proof}
It suffices to show that $\operatorname{Ob}_R^{q}\supset  \operatorname{Ob}_R^{\Phi}.$ For every $D\in \operatorname{HS}_k^A(R)$, it decomposes as $D=D^1\langle y_1 \rangle\circ \dots \circ D^m\langle y_m \rangle$ where $y_1,\dots,y_m\in \mathfrak{m}_{A}$ and $D^i\in \operatorname{HS}_k^{r(A,y_i)}(R)$ for every $1\le i \le m$. For every $i$, there exists a commutative diagram

\[\begin{tikzcd}
	0 & {k\cdot \varepsilon^{r_i+1}} & {k[\varepsilon]/\varepsilon^{r_i+2}} & {k[\varepsilon]/\varepsilon^{r_i+1}} & 0 \\
	0 & {k\cdot t} & {A'} & A & 0
	\arrow[from=1-1, to=1-2]
	\arrow[from=1-2, to=1-3]
	\arrow["{a_i\cdot}"', from=1-2, to=2-2]
	\arrow[from=1-3, to=1-4]
	\arrow["f", from=1-3, to=2-3]
	\arrow[from=1-4, to=1-5]
	\arrow[from=1-4, to=2-4]
	\arrow[from=2-1, to=2-2]
	\arrow[from=2-2, to=2-3]
	\arrow[from=2-3, to=2-4]
	\arrow[from=2-4, to=2-5]
\end{tikzcd}\]
where $r_i=r(A,y_i)$, $a_i\in k$ and $f(\varepsilon)$ is a lift of $y_i$ to $A'$. We have
\begin{align*}
\operatorname{ob}_\Phi(D)&=\operatorname{ob}_\Phi(D^1\langle y_1 \rangle)+\dots+\operatorname{ob}_\Phi(D^m\langle y_m \rangle) \\ &=a_1 \cdot\operatorname{ob}_{r_1+1}(D^1) +\dots + a_m \cdot\operatorname{ob}_{r_m+1}(D^m).
\end{align*}
In the above sum, if $a_i\neq 0$, then $r_i+1 \in S$ in the notation of Definition \ref{i Phi} and, in particular, $\operatorname{ob}_{r_i+1}(D^i) \in \operatorname{Ob}_R^q$. Since $\operatorname{Ob}_R^q$ is closed under multiplication by $k^q=k$, the above sum lies in $\operatorname{Ob}_R^q$.\end{proof}

By Theorem \ref{artin obst theorem}, we obtain:

\begin{corollary}\label{Ob sheaf artin}
    Let $X$ be a scheme of finite type over $k$ and
    $$\Phi:0\to k\cdot t\longrightarrow A'\longrightarrow A\to 0$$
    be a small extension.
    Then there exists a subsheaf
    $$\operatorname{Ob}_X^{\Phi}\subset T_{X/k}^1$$
    such that for every affine open subscheme $\operatorname{Spec}R\subset X,$ we have
    $$\operatorname{Ob}_X^{\Phi}(\operatorname{Spec}R)=\operatorname{Ob}_R^\Phi\subset T_{R/k}^1.$$ In fact, we have $$\operatorname{Ob}_X^{\Phi}=\operatorname{Ob}_X^{q}$$
    where $q=p^{i(\Phi)}$ and it has the structure of a coherent $\mathcal{O}_X$-module as stated in Theorem \ref{Ob sheaf}.
    
\end{corollary}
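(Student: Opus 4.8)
The plan is to observe that there is essentially nothing new to establish: Theorem \ref{artin obst theorem} has already identified $\operatorname{Ob}_R^\Phi$ with $\operatorname{Ob}_R^q$ for $q = p^{i(\Phi)}$, and the integer $i(\Phi)$ depends only on the small extension $\Phi$, not on the $k$-algebra $R$ (see Definition \ref{i Phi}). Hence the sheaf we want is literally the sheaf $\operatorname{Ob}_X^q$ already produced by Theorem \ref{Ob sheaf}, and the corollary amounts to transporting the affine-local equality of Theorem \ref{artin obst theorem} through that construction.

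Concretely, first I would fix $q := p^{i(\Phi)}$ once and for all. For every affine open $\operatorname{Spec} R \subset X$, the ring $R$ is of finite type over $k$, in particular essentially of finite type, so Theorem \ref{artin obst theorem} applies and yields the equality of subgroups $\operatorname{Ob}_R^\Phi = \operatorname{Ob}_R^q$ inside $T^1_{R/k}$. Since $q$ is independent of the chosen affine open, I would simply set $\operatorname{Ob}_X^\Phi := \operatorname{Ob}_X^q \subset T^1_{X/k}$, where $\operatorname{Ob}_X^q$ is the subsheaf furnished by Theorem \ref{Ob sheaf}. Then for any affine open $\operatorname{Spec} R \subset X$,
$$\operatorname{Ob}_X^\Phi(\operatorname{Spec} R) = \operatorname{Ob}_X^q(\operatorname{Spec} R) = \operatorname{Ob}_R^q = \operatorname{Ob}_R^\Phi \subset T^1_{R/k},$$
which is exactly the asserted description of sections, and the identity $\operatorname{Ob}_X^\Phi = \operatorname{Ob}_X^q$ holds by definition. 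The coherent $\mathcal{O}_X$-module structure, together with the finer $\mathcal{O}_X^q$-module structure, is then inherited verbatim from $\operatorname{Ob}_X^q$ via Theorem \ref{Ob sheaf}, with $q = p^{i(\Phi)}$ playing the role of the $p^i$ appearing there (the largest power of $p$ dividing $q$ being $q$ itself).

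The only genuine content here — the compatibility of $\operatorname{Ob}_R^\Phi$ with localization and the existence of its module structure — is packaged entirely into Theorem \ref{artin obst theorem}, which in turn rests on the decomposition Theorem \ref{decomposition thm}; once those are in hand, the present statement is a formal consequence, so I do not expect any real obstacle. The one small point worth stating explicitly is that being a \emph{subsheaf} is automatic here, since we are not constructing a new object but reusing $\operatorname{Ob}_X^q$, whose sheaf property and inclusion into $T^1_{X/k}$ were already checked in Theorem \ref{Ob sheaf}; the corollary merely records that this pre-existing coherent sheaf also computes the $\Phi$-obstruction module on affine opens.
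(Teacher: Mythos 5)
Your proposal is correct and matches the paper's intent exactly: the paper presents this corollary with only the remark that it follows from Theorem \ref{artin obst theorem}, and your argument --- fixing $q = p^{i(\Phi)}$ (which depends only on $\Phi$, not on $R$), invoking Theorem \ref{artin obst theorem} on each affine open, and then identifying $\operatorname{Ob}_X^\Phi$ with the sheaf $\operatorname{Ob}_X^q$ already built in Theorem \ref{Ob sheaf} --- is precisely the intended reasoning. The remark that the coherent $\mathcal{O}_X$-module structure (and the finer $\mathcal{O}_X^q$-module structure) is inherited verbatim because $q$ is itself a power of $p$ correctly closes the loop with the statement of Theorem \ref{Ob sheaf}.
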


\begin{definition}
    Let $R$ be a $k$-algebra essentially of finite type and
    $$\Phi:0\to k\cdot t\longrightarrow A'\longrightarrow A\to 0$$
    be a small extension. Define
    $$\operatorname{IHS}^\Phi(R)\subset \operatorname{HS}_k^A(R)$$
    to be the kernel of $\operatorname{ob}_\Phi$. It is precisely the image of the map $\operatorname{HS}_k^{A'}(R)\to\operatorname{HS}_k^A(R)$.
    Let $X$ be a scheme of finite type over $k$. Define
    $$\underline{\operatorname{HS}}_k^A$$
    to be the sheaf of groups on $X$, where $\underline{\operatorname{HS}}_k^A(U)=\operatorname{HS}_k^A(R)$ for every affine open subscheme $U=\operatorname{Spec}R\subset X$ with canonical restriction maps. Define
    $$\operatorname{ob}_\Phi: \underline{\operatorname{HS}}_k^A \to \operatorname{Ob}_X^\Phi$$ by gluing the local maps over affine open subschemes. Define $$\underline{\operatorname{IHS}}^\Phi\subset \underline{\operatorname{HS}}_k^A$$ to be the kernel of $\operatorname{ob}_\Phi$. We have $\underline{\operatorname{IHS}}^\Phi(U)=\operatorname{IHS}^\Phi(R)$ for every affine open subscheme $U=\operatorname{Spec}R\subset X$.
\end{definition}
\begin{theorem}\label{ihs thm}
 Let $X$ be a scheme of finite type over $k$ and
    $$\Phi:0\to k\cdot t\longrightarrow A'\longrightarrow A\to 0$$
    be a small extension. We have the following exact sequences of sheaves of groups on $X$:
    $$1\to \underline{\operatorname{IHS}}^\Phi\longrightarrow \underline{\operatorname{HS}}_k^A\longrightarrow \operatorname{Ob}_X^\Phi \to 0,$$
    $$0\to \operatorname{Der}_k(\mathcal{O}_X) \overset{d\mapsto d\langle t\rangle}{\longrightarrow} \underline{\operatorname{HS}}_k^{A'}\longrightarrow \underline{\operatorname{IHS}}^\Phi\to 1.$$
For any affine open subscheme $U\subset X$, the sections of the above sequences over $U$ are also exact.
    
\end{theorem}

\section{Preliminaries on locally trivial deformations}

\begin{definition}
    Let $A$ be an object of $\mathcal{A}$ and let $X_0$ be an algebraic scheme over $k$ (i.e. separated and of finite type over $k$). A cartesian diagram of schemes
\[\begin{tikzcd}
	{X_0} & X \\
	{\operatorname{Spec}k=\operatorname{Spec}A/\mathfrak{m}_A} & {\operatorname{Spec}A}
	\arrow[from=1-1, to=1-2]
	\arrow[from=1-1, to=2-1]
	\arrow[from=1-2, to=2-2]
	\arrow[from=2-1, to=2-2]
\end{tikzcd}\]
is called a \emph{(formal) deformation} of $X_0$ over $A$ if $X$ is flat over $\operatorname{Spec}A$.
\end{definition}

Note that $X$ and $X_0$ have the same underlying topological space. By abuse of notation, we simply say that $X$ is a deformation of $X_0$ over $A$.
\begin{definition}
     Let $A$ be an object of $\mathcal{A}$ and let $X_0$ be an algebraic scheme. Let $X$ and $Y$ be deformations of $X_0$ over $A$. An \emph{isomorphism} between $X$ and $Y$ is a commutative diagram of schemes
\[\begin{tikzcd}
	X & Y \\
	{\operatorname{Spec}A} & {\operatorname{Spec}A}
	\arrow[from=1-1, to=1-2]
	\arrow[from=1-1, to=2-1]
	\arrow[from=1-2, to=2-2]
	\arrow[equals, from=2-2, to=2-1]
\end{tikzcd}\]
such that, when restricted along $A\to A/\mathfrak{m}_A=k$, we obtain
\[\begin{tikzcd}
	{X_0} & {X_0} \\
	{\operatorname{Spec}k} & {\operatorname{Spec}k}
	\arrow[equals, from=1-1, to=1-2]
	\arrow[from=1-1, to=2-1]
	\arrow[from=1-2, to=2-2]
	\arrow[equals, from=2-2, to=2-1].
\end{tikzcd}\]
    
\end{definition}

\begin{definition}
    Let $A$ be an object of $\mathcal{A}$ and $X_0$ be an algebraic scheme. Then, a deformation $X$ is called \emph{locally trivial} if, for every $x\in X_0$, there exists an open neighborhood $x\in U\subset X_0$ such that the restriction $X|_U$ is isomorphic to the trivial deformation of $U$.
\end{definition}

\begin{definition}\label{A_n,  Def'_n, Aut_n} Let $A$ be an object of $\mathcal{A}$ and $X_0$ be an algebraic scheme. Then, the symbol $\operatorname{Def}_{X_0}'(A)$ denotes the set of locally trivial deformations of $X_0$ over $A$, modulo isomorphisms. It defines a covariant functor from $\mathcal{A}$ to the category of sets, which we call the {\em locally trivial deformation functor} of $X_0$.
\end{definition}

We now explain Schlessinger's conditions \cite[Theorem 2.11]{Sch}. Let $F:\mathcal{A}\to (\mathrm{sets})$ be a covariant functor (such an $F$ is called a {\emph{functor of artin rings}). We say that $F$ satisfies \emph{Schlessinger's condition $(H_1)$} if, for any cartesian diagram 
\[\begin{tikzcd}
	{\overline{A}=A'\times_AA''} & {A''} \\
	{A'} & A
	\arrow[from=1-1, to=1-2]
	\arrow[from=1-1, to=2-1]
	\arrow["g"', from=1-2, to=2-2]
	\arrow["f", from=2-1, to=2-2]
\end{tikzcd}\]
in $\mathcal{A}$, the induced canonical map
$$F(\overline{A})\to F(A')\times_{F(A)} F(A'')$$
is surjective whenever $f:A'\to A$ is a small extension. Here, we have $\overline{A}=\{(a,b)\in A'\times A''|f(a)=g(b)\}$ by definition. We say that $F$ satisfies \emph{Schlessinger's condition $(H_2)$} if the above map is bijective if $A''=k[\varepsilon]/\varepsilon^2, A=k$ and $A'\to A$ is a small extension.

In Section 6, We will construct an algebraic $k$-scheme $X_0$ such that $\operatorname{Def}'_{X_0}$ does not satisfy $(H_1)$.

\begin{lemma}
    Let $X_0$ be an algebraic scheme and let
\[\begin{tikzcd}
	{\overline{A}=A'\times_AA''} & {A''} \\
	{A'} & A
	\arrow[from=1-1, to=1-2]
	\arrow[from=1-1, to=2-1]
	\arrow["g"', from=1-2, to=2-2]
	\arrow["f", from=2-1, to=2-2]
\end{tikzcd}\]
be a cartesian diagram in $\mathcal{A}$. Suppose that $A=k, A''=k[\varepsilon]/\varepsilon^2$. Let $X'$ (resp. $X''$) be a locally trivial deformation of $X_0$ over $A'$ (resp. over $A''$). Then the pair $(X',X'')$ lifts to a unique locally trivial deformation $\overline{X}$ over $\overline{A}$. In particular, $\operatorname{Def}_{X_0}'$ satisfies Schlessinger's condition $(H_2)$.
\end{lemma}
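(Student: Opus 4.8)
The plan is to transport the whole question to the combinatorics of \emph{couples} (Definitions \ref{couple,}--\ref{isomorphism of couples}) by means of Corollary \ref{cech cor}, and then to exploit the fact that, because $A=k$, the ring $\overline{A}=A'\times_A A''$ is simply $A'\times_k A''$. The crucial preliminary observation is algebraic: tensoring the cartesian square by $R$ over $k$ is exact, so for every $k$-algebra $R$ one gets a ring isomorphism $\overline{A}\otimes_k R\simeq(A'\otimes_k R)\times_R(A''\otimes_k R)$ over $\overline{A}$; moreover an $\overline{A}$-algebra automorphism of $\overline{A}\otimes_k R$ that is the identity modulo $\mathfrak{m}_{\overline{A}}$ is $\overline{A}$-linear, hence preserves the ideals $\mathfrak{m}_{A''}\otimes_k R$ and $\mathfrak{m}_{A'}\otimes_k R$, hence is uniquely determined by the automorphisms it induces on the two quotients $A'\otimes_k R$ and $A''\otimes_k R$. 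Since $\operatorname{HS}^k(R)=\{\operatorname{id}\}$, this yields a natural group isomorphism
$$\operatorname{HS}^{\overline{A}}(R)\ \xrightarrow{\ \sim\ }\ \operatorname{HS}^{A'}(R)\times\operatorname{HS}^{A''}(R),\qquad D\mapsto(\varphi'_*(D),\varphi''_*(D)),$$
compatible with restriction to affine opens and with the pushforward maps. (Theorem \ref{affine} is what guarantees, in the background, that the couple data really does capture everything: over each affine $U_\alpha$ the deformation over $\overline{A}$ is automatically trivial.)

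Now fix an affine open cover $\mathfrak{U}=\{U_\alpha=\operatorname{Spec}R_\alpha\}$ of $X_0$. By Corollary \ref{cech cor} we may represent $X'$ and $X''$ by couples $(R_\alpha\otimes_k A',D_{X'}^{\alpha\beta})$ and $(R_\alpha\otimes_k A'',D_{X''}^{\alpha\beta})$ over $\mathfrak{U}$ satisfying the cocycle conditions. For \emph{existence} I would define $D_{\overline{X}}^{\alpha\beta}\in\operatorname{HS}^{\overline{A}}(R_{\alpha\beta})$ to be the element corresponding to the pair $(D_{X'}^{\alpha\beta},D_{X''}^{\alpha\beta})$ under the isomorphism above. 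Because composition and inversion are computed componentwise, $\overline{X}:=(R_\alpha\otimes_k\overline{A},D_{\overline{X}}^{\alpha\beta})$ satisfies $D_{\overline{X}}^{\alpha\alpha}=\operatorname{id}$, $D_{\overline{X}}^{\alpha\beta}=(D_{\overline{X}}^{\beta\alpha})^{-1}$ and the cocycle conditions, so it defines a locally trivial deformation $\overline{X}^{\operatorname{sch}}$ over $\overline{A}$ with $\varphi'_*\overline{X}^{\operatorname{sch}}=X'$ and $\varphi''_*\overline{X}^{\operatorname{sch}}=X''$.

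For \emph{uniqueness}, let $\overline{Y}$ be another locally trivial deformation over $\overline{A}$ restricting to $X'$ and $X''$. By Corollary \ref{cech cor} it comes from a couple $(R_\alpha\otimes_k\overline{A},D_{\overline{Y}}^{\alpha\beta})$ over $\mathfrak{U}$. Since $\varphi'_*\overline{Y}$ and $X'$ are isomorphic as deformations over $A'$, choose such an isomorphism $\Theta'$: it is the identity on $X_0$, hence induces the identity on underlying spaces, hence preserves each open $U_\alpha$ and restricts there to an automorphism of $\operatorname{Spec}(R_\alpha\otimes_k A')$ over $A'$ that is the identity modulo $\mathfrak{m}_{A'}$, i.e.\ to an element $E'^{\alpha}\in\operatorname{HS}^{A'}(R_\alpha)$; the family $\{E'^{\alpha}\}$ is then an isomorphism of couples from $(\varphi'_*(D_{\overline{Y}}^{\alpha\beta}))$ to $(D_{X'}^{\alpha\beta})$ in the sense of Definition \ref{isomorphism of couples}. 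Doing the same over $A''$ gives $\{E''^{\alpha}\}$, and the pairs $(E'^{\alpha},E''^{\alpha})$ define $\overline{E}^{\alpha}\in\operatorname{HS}^{\overline{A}}(R_\alpha)$. Since the intertwining squares of Definition \ref{isomorphism of couples} hold componentwise and $D_{\overline{Y}}^{\alpha\beta}$, $D_{\overline{X}}^{\alpha\beta}$ correspond to $(\varphi'_*,\varphi''_*)$ of their data, the family $\{\overline{E}^{\alpha}\}$ is an isomorphism of couples $\overline{Y}\to\overline{X}$, whence $\overline{Y}^{\operatorname{sch}}\simeq\overline{X}^{\operatorname{sch}}$ over $\overline{A}$. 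Bijectivity of $\operatorname{Def}'_{X_0}(\overline{A})\to\operatorname{Def}'_{X_0}(A')\times_{\operatorname{Def}'_{X_0}(A)}\operatorname{Def}'_{X_0}(A'')$ — in particular Schlessinger's $(H_2)$ — follows immediately.

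I expect the main obstacle to be the careful bookkeeping around the isomorphism $\operatorname{HS}^{\overline{A}}(R)\simeq\operatorname{HS}^{A'}(R)\times\operatorname{HS}^{A''}(R)$: one must check that it is natural in $R$, intertwines composition, and is compatible with restriction to affine opens and with $\varphi'_*,\varphi''_*$, so that it interacts correctly with the couple/cocycle/iso formalism. The companion technical point is the translation between an isomorphism of locally trivial deformations over $A'$ that is the identity on $X_0$ and an isomorphism of the associated couples over the fixed cover $\mathfrak{U}$ — it is a routine diagram chase, but it is what makes the uniqueness argument go through without refining the cover.
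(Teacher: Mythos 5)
Your proof is correct but takes a genuinely different route from the paper's. The paper constructs $\overline{X}$ abstractly as the fiber product of sheaves $\mathcal{O}_{\overline{X}}:=\mathcal{O}_{X'}\times_{\mathcal{O}_{X_0}}\mathcal{O}_{X''}$, checks flatness and local triviality over each affine $U=\operatorname{Spec}R$ via two of Schlessinger's module-theoretic lemmas together with Theorem \ref{affine}, and deduces uniqueness from the fact that $\operatorname{Def}'_{X_0}$ is a subfunctor of the full deformation functor $\operatorname{Def}_{X_0}$, which already satisfies $(H_2)$. You instead stay entirely inside the couple/cocycle formalism: since $A=k$, one has $\overline{A}\otimes_kR\simeq(A'\otimes_kR)\times_R(A''\otimes_kR)$ and hence a natural group isomorphism $\operatorname{HS}^{\overline{A}}(R)\simeq\operatorname{HS}^{A'}(R)\times\operatorname{HS}^{A''}(R)$, which lets you build the couple for $\overline{X}$ componentwise from those of $X'$ and $X''$, and the same splitting produces the isomorphism of couples needed for uniqueness. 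Both arguments ultimately hinge on Theorem \ref{affine}, which is what makes couples over a fixed cover available at all (Corollary \ref{cech cor}). The paper's proof is shorter, outsourcing the verification to Schlessinger; yours is more explicit, proves uniqueness without citing $(H_2)$ for the full deformation functor, and — since it nowhere uses $A''=k[\varepsilon]/\varepsilon^2$ — in fact establishes bijectivity of $\operatorname{Def}'_{X_0}(\overline{A})\to\operatorname{Def}'_{X_0}(A')\times\operatorname{Def}'_{X_0}(A'')$ for \emph{every} $A''\in\mathcal{A}$, a slightly stronger statement than $(H_2)$ as literally formulated.
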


\begin{proof}
 The uniqueness of $\overline{X}$ follows from the fact that $\operatorname{Def}_{X_0}'$ is a subfunctor of the usual deformation functor $\operatorname{Def}_{X_0}$ and the fact that $\operatorname{Def}_{X_0}$ satisfies $(H_2)$ itself by \cite[Example 3.7]{Sch}. Let
$$\mathcal{O}_{\overline{X}}:=\mathcal{O}_{X'}\times_{\mathcal{O}_{X_0}} \mathcal{O}_{X''}$$
be the cartesian product of sheaves over the topological space $X_0$. This is a flat $\overline{A}$-module by \cite[Lemma 3.4]{Sch}. Let $U:=\operatorname{Spec}R$ be an affine open subscheme of $X_0$. By Theorem \ref{affine}, we have
$$\mathcal{O}_{X'}(U)\simeq R\otimes_kA',\ \mathcal{O}_{X''}(U)\simeq R\otimes_kA''.$$

These isomorphisms induce a diagram
\[\begin{tikzcd}
	R & { \mathcal{O}_{X''}(U)} \\
	{\mathcal{O}_{X'}(U)} & { \mathcal{O}_{X_0}(U)}
	\arrow[from=1-1, to=1-2]
	\arrow[from=1-1, to=2-1]
	\arrow[from=1-2, to=2-2]
	\arrow[from=2-1, to=2-2]
\end{tikzcd}\]
which is necessarily commutative because $R=\mathcal{O}_{X_0}(U)$. By the universal property of a cartesian product, we have a $k$-algebra homomorphism $R\to \mathcal{O}_{\overline{X}}(U)$ and also an $\overline{A}$-algebra homomorphism $R\otimes_k \overline{A}\to \mathcal{O}_{\overline{X}}(U)$. The latter is an isomorphism by \cite[Lemma 3.3]{Sch}. Thus, $\mathcal{O}_{\overline{X}}$ defines a locally trivial lift of the pair $(X',X'')$.\end{proof}

\begin{remark}\label{fiber action rmk}
    The above lemma implies that we have a natural action of $H^1(X_0,\operatorname{Der}_k(\mathcal{O}_{X_0}))$ on the fibers of $\operatorname{Def}'_{X_0}$ as noted in \cite[Remark 2.15]{Sch}. See also Remark \ref{fiber idetification rmk}.
\end{remark}

\section{Obstruction theory of locally trivial deformations}

The aim of this section is to prove Theorem \ref{lt main thm}, which identifies the obstructions to lifting a locally trivial deformation along a small extension and describes the structure of the corresponding fiber of the functor $\operatorname{Def}'_{X_0}$.

We now recall the definition of non-abelian \v{C}ech cohomology (cf. \cite[Definition 11.11]{gw}). Suppose that $G$ is a sheaf of groups on a topological space $S$ and let $\mathcal{U}=\{U_\alpha\}_\alpha $ be an open covering of $S$. We say that $(g_{\alpha\beta})_{\alpha,\beta}\in \prod_{\alpha,\beta} G(U_{\alpha\beta})$ is a {\em \v{C}ech 1-cocycle} on $\mathcal{U}$ if $g_{\beta\gamma}g_{\alpha\beta}=g_{\alpha\gamma}$. This implies $g_{\alpha\alpha}=1$ and $g_{\alpha\beta}=g_{\beta\alpha}^{-1}$. Let $\check{H}^1(\mathcal{U},G)$ denote the set of all \v{C}ech 1-cocycles on $\mathcal{U}$, modulo the equivalence relation given by $(g_{\alpha\beta})_{\alpha,\beta}\sim(g'_{\alpha\beta})_{\alpha,\beta}$ if there exists $(h_{\alpha})_{\alpha}\in \prod_{\alpha} G(U_{\alpha})$ such that $g_{\alpha\beta}=h_{\beta}^{-1}g'_{\alpha\beta}h_{\alpha}$. We define the set
$$\check{H}^1(S,G):=\varinjlim_{\mathcal{U}}\check{H}^1(\mathcal{U},G)$$
to be the first \v{C}ech cohomology of $G$ on $S$. This is a pointed set in which the distinguished element is given by the cocycle $(1)$. Note that we have a canonical isomorphism of pointed sets $$\operatorname{Def}_{X_0}'(A)\simeq \check{H}^1(X_0,\underline{\operatorname{HS}}_k^A).$$ For $\overline{D}\in \check{H}^1(X_0,\underline{\operatorname{HS}}_k^A)$, we denote the corresponding deformation by $X_0[\overline{D}]\in \operatorname{Def}_{X_0}'(A)$. Recall that a sequence of pointed sets $$(P_1,p_1)\overset{f}{\to}(P_2,p_2)\overset{g}{\to}(P_3,p_3)$$ is called exact if $f(P_1)=g^{-1}(p_3)$. The exact sequences in Theorem \ref{ihs thm} induce exact sequences of pointed sets as follows:

\begin{proposition}\label{torsor seq}
 Let $X_0$ be an algebraic scheme over $k$ and
    $$\Phi:0\to k\cdot t\longrightarrow A'\overset{\varphi}{\longrightarrow} A\to 0$$
    be a small extension. Then we have the following two exact sequences of pointed sets
    $$
     1\to H^0(\underline{\operatorname{IHS}}^\Phi)\to H^0(\underline{\operatorname{HS}}_k^A)\to H^0(\operatorname{Ob}_{X_0}^\Phi) \overset{\partial_1} {\to} \check{H}^1(\underline{\operatorname{IHS}}^\Phi)\overset{\iota}{\to} \check{H}^1(\underline{\operatorname{HS}}_k^A)\overset{v}{\to}  \check{H}^1(\operatorname{Ob}_{X_0}^\Phi),$$
   $$ 0\to   H^0(\operatorname{Der}_k(\mathcal{O}_{X_0})) \to H^0(\underline{\operatorname{HS}}_k^{A'})\to H^0(\underline{\operatorname{IHS}}^\Phi)\to  $$
    $$\to \check{H}^1(\operatorname{Der}_k(\mathcal{O}_{X_0})) \to \check{H}^1(\underline{\operatorname{HS}}_k^{A'})\overset{\pi}{\to} \check{H}^1(\underline{\operatorname{IHS}}^\Phi)\overset{\partial_2}{\to}
     \check{H}^2(\operatorname{Der}_k(\mathcal{O}_{X_0})).$$
\end{proposition}
\begin{proof}
    This follows from \cite[Proposition 11.14]{gw} and \cite[Lemma 2.8]{BGL}. For example, $\partial_2$ is defined as follows: Let us take a \v{C}ech 1-cocycle $(E^{\alpha\beta})_{\alpha,\beta}\in\prod\underline{\operatorname{IHS}}^\Phi(U_{\alpha\beta})$ representing $\overline{E}\in\check{H}^1(\underline{\operatorname{IHS}}^\Phi)$. After refining the open covering if necessary, the $E^{\alpha\beta}$ lift to $D^{\alpha\beta}\in \underline{\operatorname{HS}}_k^{A'}(U_{\alpha\beta})$. Over $U_{\alpha\beta\gamma}$, $D^{\gamma\alpha}\circ D^{\beta\gamma}\circ D^{\alpha\beta}$ is of the form $d^{\alpha\beta\gamma}\langle t\rangle$ for $d\in H^0(U_{\alpha\beta\gamma},\operatorname{Der}_k(\mathcal{O}_{X_0}))$. Then $(d^{\alpha\beta\gamma})_{\alpha,\beta,\gamma}$ is a \v{C}ech $2$-cocycle. We define $\partial_2(\overline{E})\in \check{H}^2(\operatorname{Der}_k(\mathcal{O}_{X_0}))$ to be the class represented by $(d^{\alpha\beta\gamma})$.
\end{proof}

This shows that, given $X=X_0[\overline{D}]\in \operatorname{Def}'_{X_0}(A)$, the existence of a locally trivial lifting of $X$ to $A'$ is equivalent to non-emptiness of $(\pi\circ\iota)^{-1}(\overline{D})$.

\begin{proposition}\label{classes interpret prop} In the setting of Proposition \ref{torsor seq}, let $\overline{D}\in \check{H}^1(X_0,\underline{\operatorname{HS}}_k^A).$
\begin{enumerate}[label=(\roman*)]
\item $\iota^{-1}(\overline{D})\neq \emptyset$ if and only if $v(\overline{D})=0$.
\item $(\pi\circ\iota)^{-1}(\overline{D})\neq \emptyset$ if and only if $0\in \partial_2(\iota^{-1}(\overline{D}))$.

\end{enumerate}
\end{proposition}
\begin{proof}
    This is a consequence of exactness of the sequences in Proposition \ref{torsor seq}.
\end{proof}

Let us return to the situation that $G$ is a sheaf of groups on a topological space $S$, that $\mathcal{U}=\{U_\alpha\}_\alpha $ is an open covering of $S$ and that $(g_{\alpha\beta})_{\alpha,\beta}\in \prod_{\alpha,\beta} G(U_{\alpha\beta})$ is a \v{C}ech 1-cocycle representing $\overline{g}\in \check{H}^1(S,G)$. By $G[\overline{g}]$, we denote the twisted sheaf of groups obtained by gluing $G|_{U_\alpha}$ by $$G|_{U_{\alpha\beta}}\to G|_{U_{\beta\alpha}} , h\mapsto g_{\beta\alpha}hg_{\alpha\beta}^{-1}.$$
Now, in the setting of Proposition \ref{torsor seq}, if $\overline{D}\in \check{H}^1(X_0,\underline{\operatorname{HS}}_k^A)$, then the global section of $\underline{\operatorname{HS}}_k^A[\overline{D}]$ is identified with the infinitesimal deformation automorphism group of $X_0[\overline{D}]$ (that is, the group of $A$-automorphisms of $X_0[\overline{D}]$ which becomes the identity when restricted along $A\to k$). We have a natural group homomorphism
$$\operatorname{ob}_\Phi[\overline{D}]: \underline{\operatorname{HS}}_k^A[\overline{D}]\to \operatorname{Ob}_{X_0}^\Phi.$$

\begin{definition}\label{L(X) def}
    In the setting of Proposition \ref{torsor seq}, let $\overline{D}\in \check{H}^1(X_0,\underline{\operatorname{HS}}_k^A)$ and $X=X_0[\overline{D}].$ Define $L(X)\subset H^0(X_0,\operatorname{Ob}_{X_0}^\Phi)$ to be the image of $$H^0(X_0,\underline{\operatorname{HS}}_k^A[\overline{D}])\to H^0(X_0,\operatorname{Ob}_{X_0}^\Phi).$$
\end{definition}

\begin{proposition}\label{D action prop}
     In the setting of Proposition \ref{torsor seq}, let $\overline{D}\in \check{H}^1(X_0,\underline{\operatorname{HS}}_k^A)$ be such that $\iota^{-1}(\overline{D})\neq \emptyset$. Let $X=X_0[\overline{D}]$.
     \begin{enumerate}[label=(\roman*)]
\item We have a natural group action of $H^0(X_0,\operatorname{Ob}_{X_0}^\Phi)$ on the set $\iota^{-1}(\overline{D}).$
\item This action is transitive.
\item For $m\in H^0(X_0,\operatorname{Ob}_{X_0}^\Phi)$ and $\overline{E}\in \iota^{-1}(\overline{D})$, we have $m\cdot \overline{E}=\overline{E}$ if and only if $m\in L(X)$. In particular, we have an isomorphism of sets $H^0(\operatorname{Ob}_{X_0}^\Phi)/L(X)\simeq \iota^{-1}(\overline{D})$.
\end{enumerate}
\end{proposition}
\begin{proof}
    (i) Given $m\in H^0(\operatorname{Ob}_{X_0}^\Phi)$ and $\overline{E}\in  \iota^{-1}(\overline{D})$, we take a sufficiently fine open covering $\{U_\alpha\}$ of $X_0$ so that there exist $F^\alpha\in \underline{\operatorname{HS}}_k^A(U_\alpha)$ such that $\operatorname{ob}_\Phi(F^\alpha)=m|_{U_\alpha}$
    and that $\overline{E}$ is represented by $(E^{\alpha\beta})_{\alpha,\beta}\in \prod \underline{\operatorname{IHS}}^\Phi(U_{\alpha\beta})$. We define $m\cdot \overline{E}$ to be the class represented by $$((F^\beta)^{-1}\circ E^{\alpha\beta}\circ F^\alpha)_{\alpha,\beta}\in \prod \underline{\operatorname{IHS}}^\Phi(U_{\alpha\beta}).$$
    (ii) Let us take another $\overline{E}'\in \iota^{-1}(\overline{D})$ represented by $(E'^{\alpha\beta})_{\alpha,\beta}$. As $\overline{E}$ and $\overline{E}'$ define the same class of $\check{H}^1(X_0,\underline{\operatorname{HS}}_k^\Phi)$, there exist $F'^\alpha\in \underline{\operatorname{HS}}_k^A(U_\alpha)$ such that
    $$E'^{\alpha\beta}=(F'^{\beta})^{-1}\circ E^{\alpha\beta}\circ F'^{\alpha}.$$
    Then, the family $\{\operatorname{ob}_\Phi(F'^\alpha)\}_\alpha$ glues to yield $m'\in H^0(\operatorname{Ob}_{X_0}^\Phi)$ such that $m'\cdot \overline{E}=\overline{E}'.$
    (iii) In (i), if $m\cdot\overline{E}=\overline{E}$, there exist $E^\alpha\in  \underline{\operatorname{IHS}}^A(U_\alpha)$ such that
    $$(E^\beta)^{-1}\circ (F^\beta)^{-1}\circ E^{\alpha\beta}\circ F^\alpha\circ E^\alpha=E^{\alpha\beta}.$$
    Then, the family $\{F^\alpha\circ E^\alpha\}_\alpha\in \prod \underline{\operatorname{HS}}_k^A(U_\alpha)$ glues to yield a global section of $\underline{\operatorname{HS}}_k^A[\overline{D}]$ whose image in $H^0(\operatorname{Ob}_{X_0}^\Phi)$ is $m$. This implies $m\in L(X).$ Conversely, if $m\in L(X)$, then we can take $F^\alpha\in \underline{\operatorname{HS}}_k^A(U_\alpha)$ in such a way that $$F^\beta\circ E^{\alpha\beta}\circ (F^\alpha)^{-1}=E^{\alpha\beta}.$$
    This implies $m\cdot \overline{E}=\overline{E}$.
\end{proof}


\begin{proposition}\label{E action prop}
         In the setting of Proposition \ref{torsor seq}, let $\overline{E}\in \check{H}^1(X_0,\underline{\operatorname{IHS}}^\Phi)$ be such that $\pi^{-1}(\overline{E})\neq \emptyset$.
         \begin{enumerate}[label=(\roman*)]
             \item We have a natural group action of $\check{H}^1(X_0,\operatorname{Der}_k(\mathcal{O}_{X_0}))$ on $\pi^{-1}(\overline{E})$.
            \item This action is transitive.
         \end{enumerate}
\end{proposition}

\begin{proof}
    (i) Let $\overline{d}\in \check{H}^1(X_0,\operatorname{Der}_k(\mathcal{O}_{X_0}))$ and $\overline{D}\in \check{H}^1(X_0,\underline{\operatorname{HS}}_k^{A'})$ be represented by $(d^{\alpha\beta})$ and $(D^{\alpha\beta})$. Then, $\overline{d}\cdot \overline{D}$ is defined to be the class represented by the class $(D^{\alpha\beta}\circ d^{\alpha\beta}\langle t \rangle)$. This is indeed a group action because the image of
    $$\operatorname{Der}_k(\mathcal{O}_{X_0}) \overset{d\mapsto d\langle t\rangle}{\longrightarrow} \underline{\operatorname{HS}}_k^{A'}$$ lies in the center of $\underline{\operatorname{HS}}_k^{A'}.$ (ii) Suppose that $\overline{D}, \overline{D}'\in \pi^{-1}(\overline{E})$ are represented by $(D^{\alpha\beta}),(D'^{\alpha\beta})$.  This implies that there exist $E^\alpha\in \underline{\operatorname{IHS}}^\Phi(U_\alpha)$ such that $\varphi_*(D^{\alpha\beta})=E^\beta\circ\varphi_*(D'^{\alpha\beta})\circ (E^{\alpha})^{-1}$. As $E^\alpha$ lifts to some $D^\alpha\in \underline{\operatorname{HS}}_k^{A'}$ (after taking a refinement of the open covering), we have $D^{\alpha\beta}\equiv D^\beta\circ D'^{\alpha\beta} \circ (D^{\alpha})^{-1}$ modulo $t$. This implies that $(D^{\alpha\beta}\circ d^{\alpha\beta}\langle t\rangle)\sim (D'^{\alpha\beta})$ for some $d^{\alpha\beta}$ and we see that the family $(d^{\alpha\beta})_{\alpha,\beta}$ is a \v{C}ech 1-cocycle.
\end{proof}

\begin{remark}\label{fiber idetification rmk}
    Under the identifications $\operatorname{Def}_{X_0}'(k[\varepsilon]/\varepsilon^2)\simeq \check{H}^1(X_0,\operatorname{Der}_k(\mathcal{O}_{X_0}))$ and $\operatorname{Def}_{X_0}'(A')\simeq \check{H}^1(X_0,\underline{\operatorname{HS}}_k^{A'})$, the above action coincides with the natural one defined in terms of functor of artin rings (cf. Remark \ref{fiber action rmk}). For $\alpha: \operatorname{Def}_{X_0}'(A')\to \operatorname{Def}_{X_0}'(A)$ and $X=X_0[\overline{D}]\in\operatorname{Def}_{X_0}'(A)$, we have natural isomorphisms
    $$\alpha^{-1}(X)/\check{H}^1(X_0,\operatorname{Der}_k(\mathcal{O}_{X_0}))\simeq (\pi\circ \iota)^{-1}(\overline{D})/\check{H}^1(X_0,\operatorname{Der}_k(\mathcal{O}_{X_0}))\simeq \iota^{-1}(\overline{D}) \cap \partial_2^{-1}(0).$$
\end{remark}

\begin{definition}\label{nu def}
    In the setting of Proposition \ref{torsor seq}, let $\overline{D}\in \check{H}^1(X_0,\underline{\operatorname{HS}}_k^A)$ and $X=X_0[\overline{D}]$ be such that $\iota^{-1}(\overline{D})\neq \emptyset$. Take $\overline{E}\in \iota^{-1}(\overline{D})$.
    We define
    $$\nu_X^\Phi:  \ H^0(X_0,\operatorname{Ob}_{X_0}^{\Phi})\to \check{H}^2(X_0,\operatorname{Der}_k(\mathcal{O}_{X_0}))$$
    by
    $$\nu _X^\Phi(m)=\partial_2(m\cdot\overline{E}).$$
\end{definition}

Note that $\operatorname{ker} \nu_X /L(X)\simeq \iota^{-1}(D) \cap\partial_2^{-1}(0)$ by Proposition \ref{D action prop} (iii). Although the definition of $\nu_X$ depends on the choice of $\overline{E}$, the following proposition shows that $\operatorname{ker} \nu_X$ is unique up to linear transformations.

\begin{proposition}\label{k^p lin}      In the setting of Proposition \ref{nu def}, define
$$f^\Phi_{X_0}: \ H^0(X_0,\operatorname{Ob}_{X_0}^{\Phi})\to \check{H}^2(X_0,\operatorname{Der}_k(\mathcal{O}_{X_0}))$$
by
$$f^\Phi_{X_0}(m)=\partial_2(m\cdot\overline{E})-\partial_2(\overline{E}).$$
Then, if $X_0$ is a closed subscheme of a smooth algebraic scheme $P$, $f^\Phi_{X_0}$ depends on $\Phi$ and $X_0$, and not on $\overline{D}$ or $\overline{E}$. Moreover, $f^\Phi_{X_0}$ is a $k$-linear map in the sense that
$$f^\Phi_{X_0}(m+m')=f^\Phi_{X_0}(m)+f^\Phi_{X_0}(m'),$$ $$f^\Phi_{X_0}(a\cdot m)=a^{p^i}f^\Phi_{X_0}(m), \ a\in k,$$ where $i=i(\Phi)$ is as in Definition \ref{i Phi}.
\end{proposition}

\begin{proof}
    Let $m\in H^0(X_0,\operatorname{Ob}_{X_0}^{\Phi})$. Let $V_\alpha=\operatorname{Spec} T_\alpha$ be an sufficiently fine affine open cover of $P$ and let $U_\alpha=X_0\cap V_\alpha=\operatorname{Spec}R_\alpha$ be with $R_\alpha=T_\alpha/I_\alpha$. The class $\overline{E}$ is represented by $$E^{\alpha\beta}\in \underline{\operatorname{IHS}}^\Phi(U_{\alpha\beta})=\operatorname{IHS}^\Phi(R_{\alpha\beta}).$$ Then, $E^{\alpha\beta}$ lifts to $F^{\alpha\beta}\in \operatorname{HS}_k^{A'}(R_{\alpha\beta})$ and $F^{\alpha\beta}$ lifts to $G^{\alpha\beta}\in \operatorname{HS}_k^{A'}(T_{\alpha\beta})$. Similarly, $m|_{U_\alpha}$ lift to $F^{\alpha}\in \operatorname{HS}_k^A(R_\alpha)$ and $F^\alpha$ lift to $G^\alpha\in \operatorname{HS}_k^{A'}(T_\alpha)$, where $G^\alpha$ may not induce an element in $\operatorname{HS}_k^{A'}(R_\alpha)$. There exists an alternating family $d^{\alpha\beta}\in \operatorname{Der}_k(T_{\alpha\beta})$ such that $ d^{\alpha\beta}\langle t \rangle \circ (G^\beta)^{-1}\circ G^\alpha$ induces an element in $\operatorname{HS}_k^{A'}(R_{\alpha\beta})$. We see that $d^{\alpha\beta}+d^{\beta\gamma}+d^{\gamma\alpha}$ induces an element $\overline{d}^{\alpha\beta\gamma}\in \operatorname{Der}_k(R_{\alpha\beta\gamma})$ using Lemma \ref{bracket lem}. Now, $m\cdot \overline{E}$ is represented by $(F^\beta)^{-1}\circ D^{\alpha\beta}\circ F^\alpha$. By the next lemma, $ d^{\alpha\beta}\langle t \rangle \circ (G^\beta)^{-1}\circ G^{\alpha\beta}\circ G^\alpha$ induces an element in $\operatorname{HS}_k^{A'}(R_{\alpha\beta})$. Over $T_{\alpha\beta\gamma}$, we have
    $$d^{\gamma\alpha}\langle t \rangle\circ (G^\alpha)^{-1}\circ G^{\gamma\alpha}\circ G^\gamma\circ d^{\beta\gamma}\langle t \rangle\circ (G^\gamma)^{-1}\circ G^{\beta\gamma}\circ G^\beta \circ d^{\alpha\beta}\langle t \rangle\circ (G^\beta)^{-1}\circ G^{\alpha\beta}\circ G^\alpha$$
$$=(d^{\gamma\alpha}+d^{\beta\gamma}+d^{\alpha\beta})\langle t \rangle\circ (G^\alpha)^{-1} \circ G^{\gamma\alpha}\circ G^{\beta\gamma}\circ G^{\alpha\beta}\circ G^\alpha.$$
Here, note that there exists a $k$-derivation $d:T_{\alpha\beta\gamma}\to T_{\alpha\beta\gamma}$ such that  $G^{\gamma\alpha}\circ G^{\beta\gamma}\circ G^{\alpha\beta}\equiv d\langle t \rangle$ mod $I_{\alpha\beta\gamma}\otimes_k \mathfrak{m}_{A'}$. Thus, up to $I_{\alpha\beta\gamma}\otimes_k \mathfrak{m}_{A'}$, the above expression is congruent to 
$$\equiv(d^{\gamma\alpha}+d^{\beta\gamma}+d^{\alpha\beta})\langle t \rangle\circ (G^\alpha)^{-1} \circ G^\alpha\circ G^{\gamma\alpha}\circ G^{\beta\gamma}\circ G^{\alpha\beta}$$
$$=(d^{\gamma\alpha}+d^{\beta\gamma}+d^{\alpha\beta})\langle t \rangle\circ G^{\gamma\alpha}\circ G^{\beta\gamma}\circ G^{\alpha\beta}.$$
This implies that $f^\Phi_{X_0}$ is represented by the cocycle $(\overline{d}^{\alpha\beta\gamma})$, which depends on $\Phi$ and $X_0$ and not on $\overline{D}$ nor $\overline{E}$. This shows the first part of the proposition.

For $k$-linearlity, we can suppose that $\Phi$ is of the form $0\to k\cdot \varepsilon^q\to k[\varepsilon]/\varepsilon^{q+1}\to k[\varepsilon]/\varepsilon^q\to 0$ where $q=p^i$. For, there exists a diagram of the form
\[\begin{tikzcd}
	0 & {k\cdot \varepsilon^q} & {k[\varepsilon]/\varepsilon^{q+1}} & {k[\varepsilon]/\varepsilon^q} & 0 \\
	0 & {k\cdot t} & {A'} & A & 0
	\arrow[from=1-1, to=1-2]
	\arrow[from=1-2, to=1-3]
	\arrow["{\cdot 1}", from=1-2, to=2-2]
	\arrow[from=1-3, to=1-4]
	\arrow[from=1-3, to=2-3]
	\arrow[from=1-4, to=1-5]
	\arrow[from=1-4, to=2-4]
	\arrow[from=2-1, to=2-2]
	\arrow[from=2-2, to=2-3]
	\arrow["\varphi", from=2-3, to=2-4]
	\arrow[from=2-4, to=2-5]
\end{tikzcd}\]
that induces $\operatorname{Ob}^q_{X_0}= \operatorname{Ob}^\Phi_{X_0}$
and the above constructions are compatible with morphisms of small extensions. For $m'\in H^0(X_0,\operatorname{Ob}_{X_0}^{\Phi})$, suppose that there correspond $F'^\alpha\in \operatorname{HS}_k^A(R_\alpha), G'^\alpha\in \operatorname{HS}_k^{A'}(T_\alpha), d'^{\alpha\beta}\in \operatorname{Der}_k(T_{\alpha\beta})$.
Then, For $m+m'$, there correspond $F^\alpha\circ F'^\alpha,\ G^\alpha\circ G'^\alpha$ and $ d^{\alpha\beta}+d'^{\alpha\beta}.$
Here, note that 
$$(d^{\alpha\beta}+d'^{\alpha\beta})\langle \varepsilon^q \rangle\circ (G^\beta\circ  G'^\beta)^{-1}\circ G^\alpha\circ  G'^\alpha$$
$$=d'^{\alpha\beta}\langle \varepsilon^q \rangle\circ (G'^\beta)^{-1}\circ  (d^{\alpha\beta}\langle \varepsilon^q \rangle\circ (G^\beta)^{-1}\circ G^\alpha)\circ  G'^\alpha$$
induces an element in $\operatorname{HS}_k^q(R_{\alpha\beta})$ again by the following lemma. For $a\cdot m$, there correspond $a\bullet F^\alpha,a\bullet G^\alpha$ and $ a^qd^{\alpha\beta}$. This shows the the $k$-linearity of $f^\Phi_{X_0}$.
\end{proof}
\begin{lemma}Let $\Psi: 0\to k\cdot s\to A_2\to A_1\to 0$ be a small extension. Let $T$ be a formally smooth $k$-algebra, $I\subset T$ an ideal and $R:=T/I$. Let $D_1,D_2,D_3\in \operatorname{HS}_k^{A_1}(R)$. Let $E_1,E_2,E_3\in \operatorname{HS}_k^{A_2}(T)$ be their lifts and suppose that $E_2$ induces an element in $\operatorname{HS}_k^{A_2}(R)$, i.e., we have $E_2 (I \otimes_k A_2)\subset I\otimes_k A_2.$ Let $d\in \operatorname{Der}_k(T,T)$ be such that $d\langle s \rangle\circ E_1\circ E_3$ also induces an element in $\operatorname{HS}_k^{A_2}(R)$. Then, so does $d\langle s \rangle \circ E_1\circ E_2\circ E_3$.
\end{lemma}

\begin{proof}
    For every $i=1,2,3$, we have $E_i(I\otimes_k A_2)\subset I \otimes_k A_2 +T\otimes_k s$. Let $J:=I\otimes_k\mathfrak{m}_{A_2}$. For each $i$, $E_i$ induces $$\overline{E}_i:(T\otimes_kA_2)/J\to(T\otimes_kA_2)/J.$$ We can write
    $$\overline{E}_i(x\otimes 1)=x\otimes 1+\psi_i(x)\otimes s$$
    for $x\in I$ where $\psi_i: I\to T/I=R$. Also, $d\langle s \rangle$ induces $$\overline{d\langle s }\rangle:(T\otimes_kA_2)/J\to(T\otimes_kA_2)/J$$
    and we can write $$ \overline{d\langle s} \rangle(x\otimes 1)= x\otimes 1 + \psi(x)\otimes s,\ x\in I,\ \psi:I\to R.$$ By assumption, the restrictions of
    $\overline{E}_2$ and $\overline{d}\langle s \rangle\circ \overline{E}_1\circ \overline{E}_3$ to $I\otimes 1$ is the identity. On the other hand, for $x\in I$,
    $$\overline{E}_2(x\otimes 1)=x\otimes 1+\psi_2(x)\otimes s,$$
    $$\overline{d\langle s}\rangle\circ \overline{E}_1\circ \overline{E}_3(x\otimes 1)=x\otimes 1+(\psi+\psi_1+\psi_3)(x)\otimes s.$$
    This implies that $\psi_2=\psi+\psi_1+\psi_3=0.$ Thus, the restriction of $\overline{d}\langle s \rangle\circ \overline{E}_1\circ \overline{E}_2\circ \overline{E}_3$ to $I\otimes 1$ is also identity by the same reason. This means that $d\langle s \rangle \circ E_1\circ E_2\circ E_3$ induces an element in $\operatorname{HS}_k^{A_2}(R)$.
\end{proof}

Finally, let us summarize the above results in the language of locally trivial deformations.

\begin{theorem}\label{lt main thm}
    Let $X_0$ be an algebraic scheme over $k$,
    $$\Phi:0\to k\cdot t\longrightarrow A'\overset{\varphi}{\longrightarrow}A \to 0$$
    be a small extension and $X\in \operatorname{Def}'_{X_0}(A)$. Let $$\alpha: \operatorname{Def}'_{X_0}(A')\to \operatorname{Def}'_{X_0}(A).$$ Then:\begin{enumerate}[label=(\roman*)]
        \item There exists $$v_1(X)\in H^1(X_0,\operatorname{Ob}_{X_0}^\Phi)$$ such that, if $v_1(X)\neq 0$, then $\alpha^{-1}(X)=\emptyset$.
        \item Suppose $v_1(X)=0$. Then we have an isomorphism of sets
        $$\alpha^{-1}(X)/H^0(\operatorname{Der}_k(\mathcal{O}_{X_0}))\simeq \operatorname{ker}\nu_X^\Phi /\L(X),$$
        where $L(X), \nu_X^\Phi$ are as in Definition \ref{L(X) def}, \ref{nu def}.
        \item If $X_0$ is a closed subscheme of a smooth algebraic scheme, then $\operatorname{ker}\nu_X^\Phi$ is an affine sub-$k$-space of $H^0(X_0,\operatorname{Ob}_{X_0}^\Phi)$ that is unique up to linear transformations.
    \end{enumerate}
\end{theorem}

\begin{proof}
    Suppose that $X$ corresponds to $D\in \check{H}^1(\underline{\operatorname{HS}_k^A})$. We set $v_1(X)=v(D)$ where $v$ is as in Proposition \ref{torsor seq}. Then the results follow from the above propositions.
\end{proof}

\begin{corollary}\label{affine cor}
    If $X_0$ is affine, then any locally trivial deformation of $X_0$ is trivial.
\end{corollary}

\begin{proof}
    In the above theorem, note that $v_1(X)=0$, $H^0(\operatorname{Der}_k(\mathcal{O}_{X_0}))=0$ and $L(X)=\operatorname{ker} \nu_X^\Phi=H^0(X_0,\operatorname{Ob}_{X_0}^\Phi)$. This shows that $\alpha^{-1}(X)$ consists of a single point, i.e., that the lift of $X$ to $A'$ exists uniquely. Using this fact, we can show that the set $\operatorname{Def}'_{X_0}(A)$ consists only of the trivial deformation, by induction on the length of $A$.
\end{proof}
\section{A counterexample to $(H_1)$}

In this final section, we construct an algebraic curve in positive characteristic and verify that it does not satisfy Schlessinger's $(H_1)$ using Theorem \ref{lt main thm}.

Let $R=k[t^p, t^{p+1}]\subset k[t]$, $S=k[s^{2p+1},s^{2p+2}]\subset k[s]$, $U=\operatorname{Spec}R$, $V=\operatorname{Spec}S$. We glue $U$ and $V$ along $T:=k[t,t^{-1}]=k[s,s^{-1}]$ by $t=s^{-1}$, to obtain $$C_0=U\cup V.$$
Since the images of $R$ and $S$ in $T$ generate $T$, we see that $C_0$ is separated. In particular, $C_0$ is an algebraic curve.

We consider the following small extension
$$\Pi:0\to k\cdot \lambda^p\to k[\lambda]\overset{\pi}{\to} k[\varepsilon]\to 0$$
where the defining equations are $\varepsilon^p=0,\  \lambda^{p+1}=0$ and $\pi(\lambda)=\varepsilon$.

Let $C:=C_0\otimes_k k[\varepsilon]$ be the trivial deformation over $k[\varepsilon]$.
We would like to apply the results of the previous section in the case where $X_0=C_0$, $\Phi=\Pi$, $X=C$.

\begin{lemma}\label{inclusion proper lem}
  The inclusion $L(C)\subset H^0(C_0,\operatorname{Ob}_{C_0}^\Pi)$ is proper.
\end{lemma}

\begin{proof}
    By definition, $L(C)$ is the image of
    $$H^0(C_0,\underline{\operatorname{HS}}_k^{k[\varepsilon]})\to H^0(C_0,\operatorname{Ob}_{C_0}^\Pi)$$
    and it suffices to show that this map is not surjective.
    Note that $\operatorname{Ob}_{C_0}^\Pi$ is supported on a $0$-dimensional closed subset of $C_0$ since it vanishes on the smooth locus. Thus, it reduces to show that the composition
    $$H^0(C_0,\underline{\operatorname{HS}}_k^{k[\varepsilon]})\to H^0(C_0,\operatorname{Ob}_{C_0}^\Pi)\to H^0(U,\operatorname{Ob}_{C_0}^\Pi)$$
    is not surjective. In fact, we show that the composition is zero and that $H^0(U,\operatorname{Ob}_{C_0}^\Pi)$ is nonzero.
    
     Let $\sigma\in H^0(C_0,\underline{\operatorname{HS}}_k^{k[\varepsilon]})$. We have the following commutative diagram
\[\begin{tikzcd}
	{H^0(C_0,\underline{\operatorname{HS}}_k^{k[\varepsilon]}) } & {H^0(C_0,\operatorname{Ob}_{C_0}^\Pi)} \\
	{\operatorname{HS}_k^{k[\varepsilon]}(R)=H^0(U,\underline{\operatorname{HS}}_k^{k[\varepsilon]}) } & {H^0(U,\operatorname{Ob}_{C_0}^\Pi)}
	\arrow[from=1-1, to=1-2]
	\arrow[from=1-1, to=2-1]
	\arrow[from=1-2, to=2-2]
	\arrow[from=2-1, to=2-2]
\end{tikzcd}\]
By Lemma \ref{curve coefficient lemma} below, the restriction $\sigma_U\in\operatorname{HS}_k^{k[\varepsilon]}(R) $ of $\sigma$ to $U$ satisfies
$$\sigma_U(t^p)=t^p,$$
$$\sigma_U(t^{p+1})=t^{p+1}+\varepsilon t^p f_1+ \varepsilon t^{p+1} f_2+ \varepsilon^2 f_3, \ f_1,f_2,f_3\in R\otimes_k k[\varepsilon].$$
Let $g_1,g_2,g_3 \in R\otimes_k k[\lambda]$ be lifts of $f_i$. Then, $\sigma_U$ can be lifted to $\widetilde{\sigma}_U\in\operatorname{HS}_k^{k[\lambda]}(R)$ defined by
$$\widetilde{\sigma}_U(t^p)=t^p+\lambda^p(g_1^p+t^pg_2^p),$$
$$\widetilde{\sigma}_U(t^{p+1})=t^{p+1}+\lambda t^p g_1+ \lambda t^{p+1} g_2+ \lambda^2 g_3,$$
which is well-defined because
$$\widetilde{\sigma}_U((t^p)^{p+1}-(t^{p+1})^p)=(t^{p^2+p}+t^{p^2}\lambda^p(g_1^p+t^pg_2^p))-(t^{p^2+p}+\lambda^pt^{p^2}g_1^p+\lambda^pt^{p^2+p}g_2^p)=0.$$
This shows that $\operatorname{ob}_{\Pi}(\sigma_U)=0$ and the composition $$H^0(C_0,\underline{\operatorname{HS}}_k^{k[\varepsilon]})\to H^0(C_0,\operatorname{Ob}_{C_0}^\Pi)\to H^0(U,\operatorname{Ob}_{C_0}^\Pi)$$ is the zero map. 

Next, let us consider $D\in\operatorname{HS}_k^{k[\varepsilon]}(R) $ defined by
$$D(t^p)=t^p,$$
$$D(t^{p+1})=t^{p+1}+\varepsilon.$$
This is well-defined because $D((t^p)^{p+1}-(t^{p+1})^p)=0$. We would like to show that there is no $E\in\operatorname{HS}_k^{k[\lambda]}(R)$ that restricts to $D$. For such $E$, we would have
$$E(t^p)=t^p+\lambda^p f_4,$$
$$E(t^{p+1})=t^{p+1}+\lambda+\lambda^pf_5, \ f_4,f_5\in R.$$
Then we have
$$0=E((t^p)^{p+1}-(t^{p+1})^p)=(t^{p^2+p}+\lambda^pt^pf_4)-(t^{p^2+p}+\lambda^p)=\lambda^p(-1+t^pf_4)\neq 0,$$
hence contradiction. This shows that $\operatorname{ob}_{\Pi}(D)\neq 0$ and, in particular, we have $H^0(U,\operatorname{Ob}_{C_0}^\Pi)\neq 0.$ \end{proof}

\begin{lemma}\label{curve coefficient lemma} For $\sigma\in H^0(C_0,\underline{\operatorname{HS}}_k^{k[\varepsilon]})$ and $\sigma_U$ is the restriction of $\sigma$ to $U$, then we have 
$$\sigma_U(t^p)=t^p,$$
$$\sigma_U(t^{p+1})-t^{p+1}\in (\varepsilon t^p, \varepsilon t^{p+1}, \varepsilon^2)\subset k[t^p,t^{p+1}]\otimes_k k[\varepsilon].$$

\begin{proof}
We can identify the sections of $\mathcal{O}_C=\mathcal{O}_{C_0}\otimes_k k[\varepsilon]$ over non-empty open subsets with their image in the stalk of $\mathcal{O}_C$ at the generic point $\eta$. In particular, $\sigma(t)$ is of the form $t+ x\varepsilon$, $x\in \mathcal{O}_{C_0,\eta}\otimes_k k[\varepsilon]$ so that $\sigma(t^p)=(t+ x\varepsilon)^p=t^p$ because $p=\varepsilon^p=0$. On the other hand, as $\sigma_U(t^{p+1})\in H^0(U,\mathcal{O}_C)$, it is of the form $$\sigma(t^{p+1})=t^{p+1}+(a_0+a_pt^p+a_{p+1}t^{p+1}+\mathrm{higher \ order \ terms})$$
where the coefficients $a_i\in \varepsilon\cdot k[\varepsilon]$ are uniquely determined. We obtain $$\sigma(t)=\sigma(t^{p+1}/t^p)=t+(a_0t^{-p}+a_p+a_{p+1}t+\mathrm{higher \ order \ terms}).$$ We can replace $t$ and $ U$ by $s$ and $ V$ in the above arguments and we obtain $$\sigma(s^{2p})=s^{2p},$$ $$\sigma(s^{2p+1})=s^{2p+1}+(b_0+b_{2p+1}s^{2p+1}+b_{2p+2}s^{2p+2}+b_{4p+2}s^{4p+2}+\mathrm{higher \ order \ terms})$$
where $b_i\in \varepsilon\cdot k[\varepsilon]$. The inverse $\sigma(s^{2p+1})^{-1}$ is congruent to
    $$\frac{1}{s^{2p+1}}-\frac{1}{s^{4p+2}}(b_0+b_{2p+1}s^{2p+1}+b_{2p+2}s^{2p+2}+b_{4p+2}s^{4p+2}+\mathrm{higher \ order \ terms})$$
modulo $\varepsilon^2$. We can combine these equations to obtain
$$\begin{aligned} \sigma(t)&=\sigma(s^{-1})=\sigma(\frac{s^{2p}}{s^{2p+1}}) \\ &\equiv \frac{s^{2p}}{s^{2p+1}}-\frac{s^{2p}}{s^{4p+2}}(b_0+b_{2p+1}s^{2p+1}+b_{2p+2}s^{2p+2}+b_{4p+2}s^{4p+2}+\mathrm{higher \ order \ terms}) \\ &=t+(b_0t^{2p+2}+b_{2p+1}t+b_{2p+2}+b_{4p+2}t^{-2p}+\mathrm{lower \ order \ terms}). \\ \end{aligned}$$
By comparing coefficients modulo $\varepsilon^2$, we see that $a_0\in \varepsilon^2\cdot k[\varepsilon]$ and
$$\sigma_U(t^{p+1})-t^{p+1}\in (\varepsilon t^p, \varepsilon t^{p+1}, \varepsilon^2).$$\end{proof}
\end{lemma}

\begin{proposition}
    Let $\beta: \operatorname{Def}'_{C_0}(k[\lambda])\to \operatorname{Def}'_{C_0}(k[\varepsilon]).$ Then, the set $$\beta^{-1}(C)/H^0(C_0,\operatorname{Der}_k(\mathcal{O}_{C_0}))$$ is neither empty nor $\{\mathrm{pt}\}$. In particular, the functor $\operatorname{Def}'_{C_0}$ does not satisfy Schlessinger's condition $(H_1)$.
\end{proposition}
\begin{proof}
    We apply Theorem \ref{lt main thm}. Note that $H^2(C_0,\operatorname{Der}_k(\mathcal{O}_{C_0}))$ and $\operatorname{ker}\nu_C^\Pi=H^0(C_0,\operatorname{Ob}_{C_0}^\Phi)$. This implies that
    $$\beta^{-1}(C)/H^0(C_0,\operatorname{Der}_k(\mathcal{O}_{C_0}))\simeq \operatorname{ker}\nu_C^\Pi/L(C)$$
    consists of more than one elements. The latter part follows from the fact that this set would be either empty or a single point if $\operatorname{Def}'_{C_0}$ satisfied $(H_1)$ (cf. the proof of \cite[Theorem 2.11]{Sch}).
\end{proof}

\begin{theorem}\label{counterexample theorem}
    For algebraically closed base field of any positive characteristic, there exists an algebraic curve $C_0$ whose locally trivial deformation functor $\operatorname{Def}'_{C_0}$ does not satisfy Schlessinger's condition $(H_1)$.
\end{theorem}

\bibliographystyle{plain}
\bibliography{hasse}
\end{document}